\newtheorem{thm}[equation]{Theorem}
\newtheorem{lemma}[equation]{Lemma}
\newtheorem{prop}[equation]{Proposition}
\newtheorem{cor}[equation]{Corollary}
\theoremstyle{definition}
\newtheorem{df}[equation]{Definition}
\newtheoremstyle{example}{\topsep}{\topsep}%
     {}
     {}
     {\bfseries}
     {.}
     {2pt}
     {\thmname{#1}\thmnumber{ #2}\thmnote{ #3}}
   \theoremstyle{example}
\newtheorem{rem}[equation]{Remark}
\newtheorem{ex}[equation]{Example}
\numberwithin{equation}{section}
\def\a{{\alpha}}
\def\CC{\mathbb{C}}
\def\FF{\mathbb{F}}
\def\PP{\mathbb{P}}
\def\RR{\mathbb{R}}
\def\ZZ{\mathbb{Z}}
\def\QQ{\mathbb{Q}}
\def\HH{\mathbb{H}}
\def\men{\mathfrak{m}}
\def\Aen{\mathfrak{A}}
\def\Sen{\mathfrak{S}}
\def\Een{\mathfrak{E}}
\def\Ac{\mathcal{A}}
\def\Bc{\mathcal{B}}
\def\Cc{\mathcal{C}}
\def\Dc{\mathcal{D}}
\def\Fc{\mathcal{F}}
\def\Mc{\mathcal{M}}
\def\Nc{\mathcal{N}}
\def\Hc{\mathcal{H}}
\def\Oc{\mathcal{O}}
\def\Rc{\mathcal{R}}
\def\Sc{\mathcal{S}}
\def\Wc{\mathcal{W}}
\def\Hom{{\operatorname{Hom}}}
\def\dim{{\rm{dim}}}
\def\Shuff{{\Sc \Hc}}
\def\SShuff{{\Sc\Sc\Hc}}
\def\Sen{\mathfrak{S}}
\def\Bb{\mathbf{B}}
\def\Eb{\mathbf{E}}
\def\RRe{\operatorname{Re}}
\def\homo{\operatorname{\it \mathscr{H}\kern-.25em om}}
\def\ext{\operatorname{\it \mathscr{E}\kern-.25em xt}}
\def\edo{\operatorname{\it \mathscr{E}\kern-.25em nd}}
\def\der{\operatorname{\it \mathscr{D}\kern-.25em er}}
\def\Coh{{\mathcal{C}oh}}
\def\Bun{{\mathcal{B}un}}
\def\Bunn{\operatorname{Bun}}
\def\Hom{\operatorname{Hom}\nolimits}
\def\diag{{\operatorname{diag}\nolimits}}
\def\supp{{\operatorname{supp}\nolimits}}
\def\Spec{\operatorname{Spec}\nolimits}
\def\Ext{\operatorname{Ext}\nolimits}
\def\Ker{\operatorname{Ker}\nolimits}
\def\deg{{\operatorname{deg}\nolimits}}
\def\Pic{{\operatorname{Pic}\nolimits}}
\def\rk{\operatorname{rk}\nolimits}
\def\Im{\operatorname{Im}\nolimits}
\def\st{{\operatorname{st}\nolimits}}
\def\Id{\operatorname{Id}\nolimits}
\def\tr{\operatorname{tr}\nolimits}
\def\Sym{\operatorname{Sym}\nolimits}
\def\Supp{\operatorname{Supp}\nolimits}
\def\SZ{\overline{\Spec (\ZZ)}\nolimits}
\def\Vol{\operatorname{Vol}\nolimits}
\def\res{\operatorname{res}\nolimits}
\def\SZ{\overline{\Spec(\ZZ)}}
\def\PW{\mathcal{PW}}
\def\Mer{\Mc er}
\def\CT{\operatorname{CT}}
\def\Temp{\operatorname{Temp}}
\def\bbs{{\backslash\hskip -1mm \backslash}}
\def\Dist{{\Dc ist}}
\def\3x3{\operatorname{3}\times\operatorname{3}}
\def\1{{\bf 1}}
\def\lra{\longrightarrow}
\def\lla{\longleftarrow}
\def\(({(\hskip -1mm (}
\def\)){)\hskip -1mm )}
\def\be{\begin{equation}}
\def\ee{\end{equation}}
\def\sm{\circledS}
\def\on{\operatorname}
\def\Lamed{\mathfrak{L}}
 \def\dep{\on{dpt}}
 \def\Vertt{\on{Vert}}
\title{The spherical Hall algebra of $\SZ$}
\author{M. Kapranov, O. Schiffmann, E. Vasserot}
\begin{document}


\maketitle

\thanks{\em To Yuri Ivanovich Manin on his 75th birthday}

\vskip 1cm

\tableofcontents

 \addtocounter{section}{-1}
 
 \vfill\eject

\section{Introduction.}



\paragraph{(0.1)}
The construction of the Hall algebra of an abelian category $\Ac$  is known to
produce interesting Hopf algebras of quantum group-theoretic nature.
A condition usually imposed to ensure that 
  the Hall algebra has a compatible comultiplication, is that $\Ac$
 is hereditary  (of homological dimension 1). 
 There are two main types of hereditary abelian categories which have
 been studied in this respect. 
 
 \vskip .2cm
 
 First, if $Q$ is a {\em quiver}, we can form the category $\Ac = \Rc ep_{\FF_q}(Q)$
 of (finite-dimensional) representations of $Q$ over a finite field $\FF_q$.
  As discovered by Ringel \cite{ringel}, the   
 Hall algebra of $\Rc ep_{\FF_q}(Q)$
  is related to the quantized Kac-Moody algebra  whose Dynkin diagram is $Q$.
  More precisely, it contains $U_q(\mathfrak{n}_+)$,
   the quantization of the unipotent subalgebra on the positive
  root generators from the Kac-Moody root system. 
 
 \vskip .2cm
 
 Second, if $X$ is a smooth projective {\em curve} over $\FF_q$, we can form the
 category $\Ac=\Cc oh(X)$ of coherent sheaves on $X$. In this case the
 Hall algebra contains the spaces of unramified automorphic forms on the 
 groups $GL_r$, $r\geqslant 1$
 over the function field $K=\FF_q(X)$, and the multiplication correponds to
 forming  Eisenstein series  \cite{kapranov}. One can  also
  include  ``orbifold curves" $G\bbs X$ where $G$ is a finite group of
 automorphisms of a curve $X$, see \cite{schiffmann-orbifold}. 
 The algebras obtained in this way
 include both quantum affine algebras 
 \cite{kapranov, schiffmann-orbifold} and  
   spherical Cherednik algebras 
   \cite{schiffman-vasserot:elliptic}.

\paragraph {(0.2)} The goal of the present paper is to begin the study of a third, more arithmetic, type
of Hall algebras. It is obtained by replacing a curve $X/\FF_q$ by the spectrum
of the {\em ring of integers in a number field}, compactified at infinity 
by the Archimedean valuations. 
In this paper we consider only 
the basic example  of
$\SZ = \operatorname{Spec}(\ZZ)\cup \{\infty\}$. 
 The  role of   rank $n$ vector bundles for $\SZ$
is played by  free  abelian groups  $L$  of rank $n$ with a positive definite quadratic
form in $L\otimes\RR$, see
 \cite{stuhler1, stuhler2, grayson} as well as 
 \cite{manin, soule} for a more general point of view of Arakelov geometry. 
 The ``moduli space" of such bundles
 is the classical quotient
of reduction theory of quadratic forms
$$\Bunn_n \,\,=\,\, GL_n(\ZZ)\backslash GL_n(\RR)/O_n.$$
  Functions on $\Bunn_n$ are the same as 
 automorphic forms on $GL_n(\RR)$,
see \cite{goldfeld} for a detailed study of precisely this
situation. 
 
 \vskip .2cm

\paragraph{(0.3)} To describe our arithmetic analog of the Hall algebra, 
let $H_n=C^\infty_0(\Bunn_n)$ be the space of smooth functions
on $\Bunn_n$ with compact support.   The space
 $H=\bigoplus_n H_n$ has a natural structure of  an associative algebra,
 constructed in \S \ref{sec:hall-alg}. From the point of
view of the automorphic form theory, the multiplication in $H$ is
given by the parabolic pseudo-Eisenstein series map. 
If $X$ is a curve over $\FF_q$,
the analogous map for unramified automorphic forms over the function field $\FF_q(X)$
gives the multiplication in the Hall algebra of $X$, see \cite{kapranov}. 
So in this paper we study the space $H$ of automorphic forms on all the $GL_n(\RR)$
 as an associative algebra in its own right. 

\vskip .2cm

We further concentrate on
the subalgebra $SH\subset H$ generated by $H_1=C^\infty_0(\RR_{>0})$. 
Extending the terminology of \cite{schiffmann-vasserot:higher-genus},
 we call $SH$ the {\em spherical Hall algebra} of $\SZ$. 
 From the point of view of spectral decomposition
 \cite{moeglin-waldspurger-book}, $SH$ consists of automorphic forms
 expressible through the Eisenstein-Selberg series \cite{selberg},
 the simplest higher-dimensional analogs of the nonholomorphic Eisenstein-Mass
 series on the upper half plane. 
 This algebra has an explicit space of generators, but relations
 among these generators are not directly given.

\paragraph{(0.4)}
Our first main result describes $SH$ as a Feigin-Odesskii-type shuffle algebra,
in a way similar to the results of 
\cite{schiffmann-vasserot:higher-genus} for the case of curves over a finite field.
  However, in our case the
shuffle algebra is based not on a rational, but on a meromorphic
function: the Riemann zeta function $\zeta(s)$. This function, therefore, encodes
all the relations among the generators from $H_1$. 

\vskip .2cm

 Quadratic relations in $SH$ correpond to the classical functional equation for the
Eisenstein-Maass series,  in a way similar to the case of function field considered in \cite{kapranov}. 
One form of writing the relations is in terms of ``generating functions"
 (formal $H$-valued distributions) $\Een(s)$
depending on $s\in\CC$. It has the form
\[
\Een(s_1) \Een(s_2) \,\,=\,\, {\zeta^*(s_1-s_2)\over\zeta^*(s_1-s_2+1)} \Een(s_2)\Een(s_1),
\]
where $\zeta^*(s)$ is the full zeta function of $\SZ$
 (the product of $\zeta(s)$ with   the Gamma  and exponential factors).
This is discussed in \S \ref{sec:quadration-relations}.

\vskip .2cm

Our second main result, Theorem
\ref{thm:cubic-relations-zeta},
is that the space of the cubic relations (not following from the
quadratic ones) is identified with (an appropriate completion of) 
the space   spanned by nontrivial zeroes of $\zeta(s)$. 
In other words, {\em the space spanned by the zeroes of $\zeta(s)$ can be
realized as a
  certain algebraic homology space of  the associative algebra $H$.} 
  This is remindful of (but different from) the result of D. Zagier \cite{zagier}
  who gave an interpretation of the zeta-space using integrals of Eisenstein-Maass
  series over anisotropic tori associated to real quadratic fields.

 \paragraph{(0.5)} After the first draft of this paper was written, we learned that M. Kontsevich
 and Y. Soibelman \cite{KS} have recently considered the algebra $H$ as well.  Their 
 interest was in studying  wall-crossing formulas in $\Bun$, so our results practically do not intersect.
 We are grateful to M. Kontsevich
 and Y. Soibelman for explaining their work and providing us with the
 preliminary version of \cite{KS}.

  \paragraph{(0.6)}  M.K.  would like to thank Universities Paris-7 and Paris-13 
  as well as the Max-Planck Institut f\"ur Mathematik in Bonn for hospitality and support during the
  work on this paper. His research was also partially supported by an NSF grant.

\vskip 1cm

\section{Vector bundles on $\overline{\Spec(\ZZ)}$.}
\label{sec:bundles-SZ}
 
 By a {\em vector bundle on} $\SZ$ we will mean a triple
$E=(L,V,q)$, where $V$ is a finite-dimensional $\RR$-vector
space, $q$ is a positive definite quadratic form on $V$,   and $L\subset V$
is a $\ZZ$-lattice of maximal rank. In this case, $V$ becomes a Banach
space with  norm $\|v\|=\sqrt{q(v)}$.

\vskip .2cm

The {\em rank} of $E$
is defined as $\rk(E)=\dim_\RR(V) = \rk_\ZZ(L)$.
 A {\em morphism} 
$
f: E'=(L',V',q')\lra E=(L, V, q)
$
of vector bundles
on $\SZ$ is, by definition, a linear operator $f: V'\to V$ such that, first,
$f(L')\subset L$ and, second, $\|f\|\leqslant 1$, i.e., we have $q(f(v'))\leqslant q'(v')$
for each $v'\in V'$. In this way we get a category which we denote 
  $\Bun$. All the Hom-sets in $\Bun$ are finite. 
  
  We denote by $\Oc=(\ZZ, \RR, x^2)$ the {\em trivial bundle of rank 1}.
  
  \vskip .2cm

 The {\em dual bundle} to $E$ is defined as $E^\vee=(L^\vee, V^*, q^{-1})$, where
 $q^{-1}$ is the inverse quadratic form on the dual space. The {\em tensor product}
 of two bundles is defined as
 $$E\otimes E' = (L\otimes_\ZZ L', V\otimes_\RR V', q\otimes q'), \quad (q\otimes q')(v\otimes v'):= q(v)q'(v').$$
  In particular, we have the bundle $\underline{\Hom}(E, E')=E^\vee\otimes E'$. The corresponding
 quadratic form on $\Hom_\RR(V, V')$ takes $f: V\to V'$ into $\tr(f^t\circ f)$, where the transpose is taken with
respect to $q,q'$. We leave to the reader the proof of the following:
 
 \begin{prop} Let $E_i=(L_i, V_i, q_i)$, $i=1,2,3$, be three vector bundles on $\SZ$. Then
 $$\Hom_{\Bun}(E_1, \underline{\Hom}(E_2, E_3)) \,\,\subset\,\,\Hom_{\Bun}(E_1\otimes E_2, E_3)$$
 as subsets in $\Hom_\RR(V_1\otimes V_2, V_3)$. \qed
  \end{prop}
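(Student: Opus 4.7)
The plan is to use the standard $\RR$-linear tensor-hom adjunction
\[
\Hom_\RR(V_1, \Hom_\RR(V_2, V_3)) \;\cong\; \Hom_\RR(V_1 \otimes V_2, V_3), \qquad \phi \leftrightarrow \psi,
\]
under which $\psi(v_1 \otimes v_2) = \phi(v_1)(v_2)$. The proposition then amounts to showing that every $\phi \in \Hom_{\Bun}(E_1, \underline{\Hom}(E_2, E_3))$ yields a $\psi$ lying in $\Hom_{\Bun}(E_1 \otimes E_2, E_3)$, i.e.\ that $\psi$ is integral on $L_1 \otimes_\ZZ L_2$ and satisfies $\|\psi(w)\|_{V_3} \leq \|w\|_{V_1 \otimes V_2}$ for all $w$.

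The integrality step is straightforward from the canonical identification $L_2^\vee \otimes_\ZZ L_3 \cong \Hom_\ZZ(L_2, L_3)$, which holds because $L_2$ is a free $\ZZ$-module of finite rank. The hypothesis $\phi(L_1) \subset L_2^\vee \otimes_\ZZ L_3$ then translates to saying that $\phi(l_1) : V_2 \to V_3$ sends $L_2$ into $L_3$ for each $l_1 \in L_1$; therefore $\psi(l_1 \otimes l_2) = \phi(l_1)(l_2) \in L_3$, and $\ZZ$-bilinearity of $\psi$ extends this inclusion to the whole lattice $L_1 \otimes_\ZZ L_2$.

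For the norm condition, the key input is the elementary comparison $\|f\|_{\mathrm{op}} \leq \|f\|_{\mathrm{HS}} = \sqrt{\tr(f^t f)}$ for any $f \in \Hom_\RR(V_2, V_3)$. Applied on a simple tensor, this at once gives
\[
\|\psi(v_1 \otimes v_2)\|_{V_3} \leq \|\phi(v_1)\|_{\mathrm{op}} \|v_2\|_{V_2} \leq \|\phi(v_1)\|_{\mathrm{HS}} \|v_2\|_{V_2} \leq \|v_1\|_{V_1} \|v_2\|_{V_2} = \|v_1 \otimes v_2\|_{V_1 \otimes V_2},
\]
invoking the hypothesis $\|\phi(v_1)\|_{\mathrm{HS}} \leq \|v_1\|_{V_1}$ and the defining formula $(q_1 \otimes q_2)(v_1 \otimes v_2) = q_1(v_1) q_2(v_2)$. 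The main obstacle is then to extend this bound from simple tensors to an arbitrary $w \in V_1 \otimes V_2$. The natural route is via the singular value decomposition $w = \sum_r \sigma_r u_r \otimes v_r$ with orthonormal systems $\{u_r\} \subset V_1$ and $\{v_r\} \subset V_2$, so that $\|w\|^2 = \sum_r \sigma_r^2$, and then to control
\[
\|\psi(w)\|_{V_3}^2 = \sum_{r,r'} \sigma_r \sigma_{r'} \langle \phi(u_r) v_r, \phi(u_{r'}) v_{r'} \rangle_{V_3}
\]
by exploiting the HS-bound on each $\phi(u_r)$ together with the orthonormality of the $u_r$'s and the $v_r$'s to handle the cross terms; this last matching of norms across the two tensor factorisations is where the content of the proposition really sits.
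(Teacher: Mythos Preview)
Your treatment of the lattice condition and of the norm bound on simple tensors is correct, and the inequality $\|f\|_{\mathrm{op}}\leqslant\|f\|_{\mathrm{HS}}$ is exactly what the paper singles out in the case $E_1=\Oc$. You are also right that the passage from simple tensors to an arbitrary $w\in V_1\otimes V_2$ is the crux. But the SVD sketch you give does not close: the cross terms $\langle \phi(u_r)v_r,\phi(u_{r'})v_{r'}\rangle_{V_3}$ have no reason to vanish or to be dominated using only the orthonormality of $\{u_r\}$ and $\{v_r\}$ together with the HS bound on each $\phi(u_r)$; a naive Cauchy--Schwarz estimate produces the nuclear norm $\sum_r\sigma_r$ rather than $(\sum_r\sigma_r^2)^{1/2}$.

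In fact this gap cannot be filled, because the inclusion fails once $\rk(E_1)\geqslant 2$. Take $E_1=E_2=E_3=(\ZZ^2,\RR^2,q_{\st})$ and define $\phi:\RR^2\to M_2(\RR)$ by $\phi(e_1)=E_{11}$, $\phi(e_2)=E_{12}$ (matrix units). Then $\phi(\ZZ^2)\subset M_2(\ZZ)$ and, for $v_1=(c_1,c_2)$,
\[
\|\phi(v_1)\|_{\mathrm{HS}}^2 \,=\, c_1^2+c_2^2 \,=\, q_{\st}(v_1),
\]
so $\phi\in\Hom_{\Bun}(E_1,\underline{\Hom}(E_2,E_3))$. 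The corresponding $\psi$ sends $e_1\otimes f_1$ and $e_2\otimes f_2$ to $g_1$ and kills the other two basis vectors; hence for $w=e_1\otimes f_1+e_2\otimes f_2$ one gets $\|\psi(w)\|=2$ while $\|w\|=\sqrt{2}$, so $\|\psi\|_{\mathrm{op}}=\sqrt{2}>1$ and $\psi\notin\Hom_{\Bun}(E_1\otimes E_2,E_3)$.

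The paper itself gives no proof (it is ``left to the reader'') and its subsequent discussion treats only the case $E_1=\Oc$, where the inclusion does hold and reduces precisely to $\|f\|\leqslant\sqrt{\tr(f^tf)}$; your argument is complete and correct in that case. For general $E_1$ the stated inclusion is false, so the obstacle you flagged in your own write-up is a genuine one.
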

  
  Note the particular case of $E_1=\Oc$. The proposition in this case reduces to the inequality
  $$\|f\| \,\leqslant \sqrt{\tr(f^t\circ f)}$$
  for any linear operator $f: V_2\to V_3$. We also see why the inclusion in the proposition
  is not, in general, an equality. Indeed, for $E_1=\Oc$, the Hom-set on the left consists of integer points
  in the domain $\tr(f^t\circ f)\leqslant 1$, which is an ellipsoid. But the Hom-set on the right
  consists of integer points in the domain $\|f\|\leqslant 1$ which is not an ellipsoid, if $\dim(V_2),\dim(V_3) > 1$. 
  
 \vskip .2cm

 We also have the symmetric and exterior product functors
 $$\begin{gathered} S^r(E) = (S^r_\ZZ(L), S^r_\RR(V), S^r(q)),
  \quad S^r(q)(v_1\bullet \cdots \bullet v_r) :=
  q(v_1) \cdots q(v_r),
 \cr
 \Lambda^r(E) = (\Lambda^r_\ZZ(L), \Lambda^r_\RR(V), \Lambda^r(q)), \quad 
 \Lambda^r(q)(v_1\wedge \cdots \wedge v_r)  := \det\| B(v_i, v_j)\|.
 \end{gathered}
 $$
 Here $\bullet$ is the product in the symmetric algebra, while
$B$ is the symmetric bilinear form such that $q(v)=B(v,v)$.

\vskip .2cm

Let $\Bunn_n$ be the set of isomorphism classes of rank $n$ vector bundles on $\SZ$.
This set is  a classical double quotient of the theory of automorphic forms:
\be\label{eq:bijection-Z} 
\Bunn_n \buildrel \sim\over\lla GL_n(\ZZ)\backslash GL_n(\RR)/O_n.
\ee
Explicitly, the double coset of $g_\infty\in GL_n(\RR)$ corresponds to the
isomorphism class of the bundle
$(\ZZ^n, \RR^n,  (g_\infty^t)^{-1}_*(q_\st))$, where 
  \[
  q_\st(x_1, ..., x_n)\,\, =\,\,\sum_{i=1}^n  x_i^2
  \]
   is the standard quadratic form on $\RR^n$
  and $(g_\infty^t)^{-1}_*(q_\st)(x)=q_\st((g_\infty^t)^{-1}(x))$ is the quadratic form corresponding to the
   symmetric matrix
  $(g_\infty^t)^{-1}\cdot g_\infty^{-1}$.  
  
  \vskip .2cm

We will also need an   adelic version of \eqref{eq:bijection-Z}.
Let $\Aen^f =  \prod_p^{\res}\QQ_p$ be the ring of  finite adeles of the field $\QQ$, 
let $\widehat \ZZ = \prod_p \ZZ_p\subset\Aen^f$ be the profinite completion of $\ZZ$,
and $\Aen = \RR\times \Aen^f$ be the full ring of adeles. Then 
  $K_n := O_n \times\prod_p GL_n(\ZZ_p)$  is a maximal compact subgroup of
$GL_n(\Aen)$.

\begin{prop}\label{prop:bun-matrices} The embedding of $GL_n(\RR)$ into $GL_n(\Aen)$
induces a bijection 
 $$\Bunn_n \simeq  GL_n(\ZZ)\backslash GL_n(\RR)/O_n\buildrel \a\over\lra
GL_n(\QQ)\backslash GL_n(\Aen)/K_n. $$

\end{prop}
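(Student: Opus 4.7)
The map $\alpha$ arises from the diagonal embedding $GL_n(\RR) \hookrightarrow GL_n(\Aen)$, $g_\infty \mapsto (g_\infty, 1)$, and descends to double cosets because $GL_n(\ZZ) \subset GL_n(\QQ)$ and $O_n \subset K_n$. Bijectivity will rest on two arithmetic facts about $\SZ$: the equality $GL_n(\QQ) \cap GL_n(\widehat\ZZ) = GL_n(\ZZ)$, and the fact that $\ZZ$ is a PID --- equivalently, that $\QQ$ has class number one.

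For \emph{injectivity}, suppose $g_\infty, g_\infty' \in GL_n(\RR)$ have the same image under $\alpha$. Then there exist $\gamma \in GL_n(\QQ)$ and $k = (k_\infty, k_f) \in O_n \times \prod_p GL_n(\ZZ_p)$ with $\gamma (g_\infty, 1) k = (g_\infty', 1)$. Looking at the finite component gives $\gamma k_f = 1$, hence $\gamma = k_f^{-1} \in GL_n(\QQ) \cap GL_n(\widehat\ZZ) = GL_n(\ZZ)$; the Archimedean component then reads $\gamma g_\infty k_\infty = g_\infty'$, placing $g_\infty$ and $g_\infty'$ in the same class of $GL_n(\ZZ) \backslash GL_n(\RR) / O_n$.

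For \emph{surjectivity}, given $g = (g_\infty, g_f) \in GL_n(\Aen)$, I aim to adjust $g$ by $GL_n(\QQ)$ on the left and $K_n$ on the right so as to kill the finite component. The idea is to view $g_f$ through a lattice: form the $\widehat\ZZ$-lattice $M = g_f \cdot \widehat\ZZ^n \subset (\Aen^f)^n$ and set $L = M \cap \QQ^n$. By the standard local-global bijection between $\ZZ$-lattices in $\QQ^n$ and $\widehat\ZZ$-lattices in $(\Aen^f)^n$, we have $M = L \otimes_\ZZ \widehat\ZZ$, with $L$ a $\ZZ$-lattice of full rank. Since $\ZZ$ is a PID, $L$ is free; any $\ZZ$-basis of $L$, assembled as columns, gives $\gamma \in GL_n(\QQ)$ with $\gamma \cdot \ZZ^n = L$. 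Then $\gamma^{-1} g_f$ stabilizes $\widehat\ZZ^n$ and so lies in $\prod_p GL_n(\ZZ_p)$, which can be absorbed into $K_n$; the resulting representative is $(\gamma^{-1} g_\infty, 1)$, which lies in the image of $\alpha$.

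The only step really specific to $\SZ$ is the appeal to $\ZZ$ being a PID --- for a general number field $F$, the same argument would produce a bijection from $\Bunn_n$ to a disjoint union of adelic double cosets indexed by $\mathrm{Cl}(F)$. I do not anticipate a serious obstacle beyond being careful with this class-number input and with the identification $GL_n(\QQ) \cap GL_n(\widehat\ZZ) = GL_n(\ZZ)$, both of which are standard.
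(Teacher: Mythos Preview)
Your proof is correct and rests on the same arithmetic input as the paper's: the lattice $L = g_f\widehat\ZZ^n \cap \QQ^n$ is free because $\ZZ$ is a PID, and $GL_n(\QQ)\cap GL_n(\widehat\ZZ)=GL_n(\ZZ)$. The only difference is organizational. You argue injectivity and surjectivity separately in purely group-theoretic terms, whereas the paper builds an explicit inverse $\beta: GL_n(\QQ)\backslash GL_n(\Aen)/K_n \to \Bunn_n$ by sending $g=(g_\infty,(g_p))$ to the bundle $(L_g,\RR^n,(g_\infty^t)^{-1}_*q_{\st})$ with $L_g = \QQ^n\cap\bigcap_p g_p^t(\ZZ_p^n)$, and then checks $\alpha\beta=\Id$, $\beta\alpha=\Id$. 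Your lattice $L$ and the paper's $L_g$ are the same object (up to the transpose convention), so the surjectivity step is really the same computation. The paper's packaging has the mild advantage that the explicit bundle-theoretic formula for $\beta$ is reused later, but as a proof of the bijection your argument is equally complete.
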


\begin{proof} The statement is of course well known. 
 We describe the inverse map explicitly for later use.
 Let $g=(g_\infty, (g_p))\in GL_n(\Aen)$, so $g_\infty\in GL_n(\RR)$ and
 $g_p\in GL_n(\QQ_p)$, with $g_p\in GL_n(\ZZ_p)$ for almost all $p$.
 We associate to $g$ a vector bundle $E_g=(L_g, V_g, q_g)$
 on $\SZ$ by putting:
 \[
 L_g \,\,=\,\, \QQ^n \,\cap\,\bigcap_p g_p^t(\ZZ_p^n), \quad
 V_g=\RR^n, \quad
 q_g =  (g_\infty^t)^{-1}_*(q_\st). 
 \]
 It is clear that $E_{\gamma g k}\simeq E_g$ for $\gamma\in GL_n(\QQ)$,
 $k\in K_n$, so we get a map
 \[
 GL_n(\QQ)\backslash GL_n(\Aen)/K_n\buildrel \beta\over\lra \Bunn_n.
 \]
 By construction, $\beta \a=\Id$; the fact that $\a\beta=\Id$ follows since $L_g$ is a free
 abelian group. \end{proof}

\begin{ex}\label{ex:picard}
 Take $n=1$. The set $\Bunn_1$ formed by isomorphism classes
of line bundles, will be also denoted by $\Pic(\SZ)$. This set
is a group under tensor multiplication. It is identified
with $\RR_{+}^\times$, the multiplicative group of positive real numbers.
Explicitly, given $E=(L,V,q)$ with $\dim_\RR(V)=1$, we associate to it
the number $\deg(E) = 1/\sqrt{q(l_{\min})}\in \RR_{+}$, where $l_{\min}$
is one of the two generators of $L$. Conversely, for $a\in \RR_{+}$
we denote by
$\Oc(a)\,\,=\,\,(\ZZ, \RR, a^{-2}\cdot q_\st)$
the corresponding line bundle with $\deg(\Oc(a))=a$. The convention,
compatible with \eqref{eq:bijection-Z}  for $n=1$, is
chosen so that for $a\gg 0$ the bundle $\Oc(a)$ has many "global sections",
i.e., lattice points $l$ such that $q(l)\leqslant 1$.
\end{ex}

\begin{ex}\label{ex:Bun-2-1}
 For any $n$, taking the top exterior power together with the
isomorphism of Example \ref{ex:picard}, gives a map
$$\Bunn_n\buildrel\det\over\lra\Pic(\SZ) \buildrel \sim\over\lra \RR_{+}.$$
Explicitly, $E=(L,V,q)$ is sent into $1/\Vol(V/L)$, the inverse of the covolume
of $L$ with respect to the Lebesgue measure defined by $q$. 
We will denote this inverse covolume by $\deg(E)$ and call it the degree of $E$. 
We denote by $\Bunn_{n,a}$ the set of isomorphism classes of
bundles of rank $n$ and degree $a$. 

\vskip .2cm

Consider the case $n=2$ and take $a=1$. In this case
$$\Bunn_{2,1}\,\,=\,\, SL_2(\ZZ)\backslash SL_2(\RR)/SO_2$$
is identified with the quotient $SL_2(\ZZ)\backslash\HH$, where $\HH\subset\CC$
is the upper half-plane $\Im(z) >0$. 
More explicitly, consider the standard quadratic form on $\CC$ given by
$q_\st(z) = |z|^2$. Then, for $\tau\in\HH$,  the lattice $\ZZ + \ZZ\tau$ has, with respect to
$q_\st$, the covolume equal to $\Im(\tau)$. We therefore associate to $\tau$
the bundle
$$E_\tau\,\,=\,\, \bigl( \ZZ + \ZZ\tau, \,\CC, \,q_\st/\Im(\tau)^{1/2}\bigr)\,\,\in\,\,\Bunn_{2,1}.$$
\begin{lemma} For $\gamma\in SL_2(\ZZ)$ we have $E_{\gamma(\tau)} \simeq E_\tau$, and this
establishes an identification $SL_2(\ZZ)\backslash H\to \Bunn_{2,1}$. 
\end{lemma}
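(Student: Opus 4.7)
The plan is to produce the classical factor-of-automorphy isomorphism explicitly, and then establish bijectivity by introducing a complex structure on the underlying real plane of a rank-$2$ bundle. For $\gamma = \begin{pmatrix} a & b \\ c & d \end{pmatrix} \in SL_2(\ZZ)$, I would consider the $\RR$-linear map $\phi_\gamma : \CC \to \CC$, $z \mapsto z/(c\tau+d)$. Two verifications are needed. First, $\phi_\gamma$ sends $\ZZ + \ZZ\tau$ bijectively onto $\ZZ + \ZZ\gamma(\tau)$: factoring out $(c\tau+d)^{-1}$ from the target reduces this to the statement that $\{c\tau+d,\,a\tau+b\}$ is a $\ZZ$-basis of $\ZZ + \ZZ\tau$, which holds because $ad-bc = 1$. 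Second, $\phi_\gamma$ matches the two prescribed quadratic forms: combining $|\phi_\gamma(z)|^2 = |z|^2/|c\tau+d|^2$ with the classical identity $\Im(\gamma\tau) = \Im(\tau)/|c\tau+d|^2$ gives the compatibility built into the normalization of $E_\tau$.

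This yields a well-defined map $SL_2(\ZZ)\backslash\HH \to \Bunn_{2,1}$, and the remaining task is bijectivity. For surjectivity, given $(L,V,q) \in \Bunn_{2,1}$, the positive definite form $q$ on the real plane $V$ determines two complex structures differing by conjugation (the isometric rotations by $\pm\pi/2$); choosing one makes $V$ a one-dimensional $\CC$-vector space on which $q$ is a positive scalar multiple of $|\cdot|^2$. Fix a $\ZZ$-basis $(\omega_1,\omega_2)$ of $L$ positively oriented for this complex structure; multiplication by $\omega_1^{-1}$ is a $\CC$-linear isomorphism $V \cong \CC$ sending $L$ to $\ZZ + \ZZ\tau$ with $\tau = \omega_2/\omega_1 \in \HH$, and the degree-$1$ condition pins down the scalar in the form, exhibiting the bundle as $E_\tau$. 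For injectivity, an isomorphism $f: E_\tau \to E_{\tau'}$ in $\Bun$ is an $\RR$-linear similarity of the Euclidean plane, hence of the form $z \mapsto \alpha z$ or $z \mapsto \alpha \bar z$ for some $\alpha \in \CC^\times$; the antilinear case is ruled out because $f$ must send a positively oriented $\ZZ$-basis of one lattice to a positively oriented one of the other. Writing $\alpha = c\tau'+d$ and $\alpha\tau = a\tau'+b$ with $\begin{pmatrix}a&b\\c&d\end{pmatrix}\in SL_2(\ZZ)$ and dividing yields $\tau = (a\tau'+b)/(c\tau'+d) = \gamma(\tau')$, as required.

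The step I expect to require the most care is the orientation bookkeeping in the injectivity argument: the raw bundle data $(L,V,q)$ carries only a real structure, so one must justify why any bundle isomorphism between $E_\tau$ and $E_{\tau'}$ may be taken complex-linear with respect to the standard complex structure on $\CC$, and it is precisely this constraint that cuts $GL_2(\ZZ)$ down to $SL_2(\ZZ)$.
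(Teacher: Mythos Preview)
Your argument for $E_{\gamma(\tau)}\simeq E_\tau$ via $z\mapsto z/(c\tau+d)$ is correct and in fact more direct than the paper's proof, which only verifies the two generators $\tau\mapsto\tau+1$ and $\tau\mapsto -1/\tau$ of $SL_2(\ZZ)$ (using multiplication by $\tau$ for the latter). The paper's proof does not address surjectivity or injectivity at all; it stops after checking well-definedness on generators.

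Your surjectivity argument is fine. The concern you raise about injectivity, however, is not merely a matter of bookkeeping: the step cannot be completed as you hope. Complex conjugation $z\mapsto\bar z$ is a legitimate isomorphism in $\Bun$ (the morphisms are $\RR$-linear with a norm bound, nothing more), and it gives $E_\tau\simeq E_{-\bar\tau}$: it preserves $|\cdot|^2$ and hence the rescaled form (since $\Im(-\bar\tau)=\Im(\tau)$), and it carries $\ZZ+\ZZ\tau$ onto $\ZZ+\ZZ\bar\tau=\ZZ+\ZZ(-\bar\tau)$. But $\tau$ and $-\bar\tau$ are in general not $SL_2(\ZZ)$-equivalent: for instance $\tau=0.3+2i$ and $-\bar\tau=-0.3+2i$ both lie in the interior of the standard fundamental domain. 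So the map $SL_2(\ZZ)\backslash\HH\to\Bunn_{2,1}$ is generically two-to-one, not a bijection. The correct identification, consistent with the paper's own general formula $\Bunn_n=GL_n(\ZZ)\backslash GL_n(\RR)/O_n$, is $\Bunn_{2,1}\simeq PGL_2(\ZZ)\backslash\HH$, with the determinant $-1$ elements acting via $\tau\mapsto(a\bar\tau+b)/(c\bar\tau+d)$. Your instinct that the orientation issue was the crux was exactly right; it is where the lemma as stated actually breaks.
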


\begin{proof} It is clear that $E_\tau\simeq E_{\tau +1}$. Let us show that
$E_{-1/\tau}\simeq E_\tau$. Note that
$$\Vol(\CC/L_{-1/\tau}) \,\,=\,\,\Im(-1/\tau)\,\, =\,\,
\Im\biggl({-\overline\tau\over |\tau|^2}\biggr).
$$
Notice also that  multiplication by $\tau$ defines an isomorphism of lattices
\[
L_{-1/\tau}\buildrel \tau\over\lra L_\tau.
\]
The determinant of the multiplication
by $\tau$ being $|\tau|^2$, we conclude that this multiplication defines an isomorphism
$$ \bigl( L_{-1/\tau}, \,\CC,\, q_{st}/\Im(-1/\tau)^{1/2}\bigr)
\lra
\bigl(L_\tau,\,\CC,\, q_\st/\Im(\tau)^{1/2}\bigr)$$
of vector bundles over $\SZ$.  \end{proof}

\end{ex}

Let now
\be\label{eq:SES}
0\to E'=(L',V',q')\buildrel i\over\lra E=(L,V,q)\buildrel j\over\lra E''=(L'',V'',q'')\to 0
\ee
be a sequence  of vector bundles on $\SZ$ and their morphisms. 

\begin{df}\label{def:SES-Bun} 
We say that a sequence \eqref{eq:SES} is  {\em short exact} (in $\Bun$),
if the following hold:

\begin{itemize}
\item[(1)] The induced sequences of vector spaces and abelian groups are short exact.

\item[(2)] The form $q'$ is equal to $i^*(q)$, the pullback of $q$ via $i$, defined by
 \[ (i^*q)(v')=q(i(v')),  \quad  v'\in V'.
 \]

\item[(3)] The form $q''$ is equal to $j_*(q)$, the pushforward of $q$ via $j$, defined by
\[
(j_*q)(v'') = \min\limits_{j(v)=v''} q(v), \quad  v''\in V''.
\]  
\end{itemize}

\noindent An {\em admissible monomorphism} (resp.
{\em  admissible epimorphism}) in $\Bun$
is a morphism which can be included into a short exact sequence as
$i$ (resp. $j$). 
\end{df}

Let us call a {\em subbundle} in $E$ an equivalence class of admissible
monomorphisms $E'\to E$ modulo isomorphisms of the source.
For such a subbundle $E'$ we have the quotient bundle $E/E'\in\Bun$. 

\begin{prop} Let $E=(L,V,q)$ be a vector bundle on $\SZ$. The following sets are
in bijection:

(i) Rank $r$ subbundles $E'\subset E$.

(ii) Rank $r$ primitive sublattices, i.e., subgroups $L'\subset L$ such that $L/L'$ has no
torsion.

(iii) $\QQ$-linear subspaces $W'\subset L\otimes_\ZZ\QQ$ of dimension $r$. 
\end{prop}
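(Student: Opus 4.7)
\medskip

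The plan is to establish the two equivalences (ii)$\leftrightarrow$(iii) and (i)$\leftrightarrow$(ii) separately, with almost all of the real work concentrated in the second.

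The equivalence (ii)$\leftrightarrow$(iii) is pure lattice theory. Given a primitive rank $r$ sublattice $L'\subset L$, set $W' = L'\otimes_\ZZ\QQ \subset L\otimes_\ZZ\QQ$; this is a $\QQ$-subspace of dimension $r$. Conversely, given $W'\subset L\otimes_\ZZ\QQ$ of dimension $r$, set $L' = W'\cap L$ (intersection inside $L\otimes\QQ$). Then $L'$ is a finitely generated abelian subgroup of $L$ which is saturated, i.e.\ $L/L'$ is torsion-free, so $L'$ is primitive of rank $r$. The two constructions are mutually inverse: clearly $(L'\otimes\QQ)\cap L \supset L'$, and equality uses precisely the primitivity of $L'$; in the other direction $(W'\cap L)\otimes\QQ = W'$ because $W'\cap L$ is a full lattice in $W'$.

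For (i)$\to$(ii), given an admissible monomorphism $i:E'\to E$ representing a subbundle, consider the short exact sequence \eqref{eq:SES} it fits into. The underlying sequence of abelian groups $0\to L'\to L\to L''\to 0$ is exact (condition (1) of Definition \ref{def:SES-Bun}), and $L''$ is a lattice, hence torsion-free. Thus $i(L')\subset L$ is a primitive sublattice of rank $r$. It depends only on the equivalence class of $i$, since any isomorphism of the source induces an isomorphism of images.

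For the inverse map (ii)$\to$(i), start with a primitive sublattice $L'\subset L$ of rank $r$. Put $V' = L'\otimes_\ZZ\RR\subset V$ and $q' = q|_{V'}$; since $q$ is positive definite, so is $q'$, and $L'\subset V'$ is a lattice of maximal rank, so $E'=(L',V',q')$ is a vector bundle. The inclusion $i: E'\hookrightarrow E$ is a morphism with $q' = i^*q$. It remains to construct the quotient and verify admissibility. Set $V'' = V/V'$, $L'' = L/L'$, and define $q''$ on $V''$ by the pushforward formula $(j_*q)(v'') = \min_{j(v)=v''} q(v)$. Since $V'\subset V$ is a closed subspace of a Euclidean space, this minimum is attained at the $q$-orthogonal projection of any lift of $v''$, and equals $q(v'')$ computed in the orthogonal complement $(V')^\perp$. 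Hence $q''$ is a positive definite quadratic form on $V''$, and $L''$ is torsion-free of rank $n-r$ by primitivity of $L'$, so $E''=(L'',V'',q'')$ is a vector bundle and the sequence \eqref{eq:SES} is short exact in $\Bun$. Thus $i$ is an admissible monomorphism.

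The two maps are mutually inverse: starting from $L'$ and running the composition returns the sublattice $i(L')=L'$; starting from $[i:E'\to E]$, the canonical subbundle built from $L'=i(L')\subset L$ is isomorphic to $E'$ via $i$ itself, since $i$ identifies $V'$ with the $\RR$-span of $L'$ in $V$ and $q' = i^*q$ matches the restricted form. The main subtle point, and the only real obstacle, is verifying that the pushforward form $j_*q$ is indeed a positive definite quadratic form realizable as the restriction of $q$ to $(V')^\perp$; this is where the Euclidean structure of $V$ is used essentially, and it is what ensures that primitive sublattices, and not merely arbitrary saturated subgroups, correspond to genuine subbundles in the sense of Definition \ref{def:SES-Bun}.
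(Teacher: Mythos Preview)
Your proof is correct and follows the same approach as the paper: the bijection (iii)$\to$(ii) is $W'\mapsto L\cap W'$, and (ii)$\to$(i) is $L'\mapsto (L',\,L'\otimes_\ZZ\RR,\,q|_{L'\otimes\RR})$. The paper's proof is terse, simply stating these maps; you have supplied the verifications (in particular that $j_*q$ is positive definite via orthogonal projection, and that the resulting sequence is admissible), though your closing remark contrasting ``primitive sublattices'' with ``arbitrary saturated subgroups'' is a slip in wording, since these are the same thing.
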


\begin{proof} The bijection between (ii) and (i) takes a primitive sublattice $L'$
into $E'=(L', V',q')$, where $V'=L'\otimes_\ZZ\RR$ and $q'=q|_{V'}$. The bijection
between (iii) and (ii) takes a subspace $W'$ into the sublattice $L'=L\cap W'$.
\end{proof}

\begin{prop}\label{prop:finiteness-subbundles} Let $E=(L,V,q)$ be a vector bundle
on $\SZ$. 
 For any $r\in\ZZ_+$ and $a\in \RR_{+}$, the set of subbundles $E'\subset E$
with $\rk(E')=r$ and $\deg(E')\geqslant a$, is finite.
\end{prop}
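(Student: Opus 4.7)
The plan is to use the Plücker embedding to reduce the problem to the elementary fact that a lattice in a finite-dimensional normed space has only finitely many points in any bounded ball. By the preceding proposition, a rank-$r$ subbundle $E'\subset E$ is determined uniquely by its underlying primitive sublattice $L'\subset L$ of rank $r$, so it suffices to bound the number of such primitive sublattices with $\deg(E')\geqslant a$.

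The key step is to translate the degree bound into a norm bound on an associated wedge vector. Given a primitive sublattice $L'$, choose a $\ZZ$-basis $e_1,\ldots,e_r$ of $L'$ and form $\omega(L') := e_1\wedge\cdots\wedge e_r \in \Lambda^r L$. A different choice of basis changes $\omega(L')$ only by a sign, and one recovers $L'$ from $\omega(L')$ as $\{v\in L : v\wedge\omega(L') = 0\}$, so the assignment $L'\mapsto \pm\omega(L')$ is injective. From the very definition of the form $\Lambda^r(q)$ recalled earlier in this section we have
$$\Lambda^r(q)\bigl(\omega(L')\bigr) \,=\, \det\| B(e_i,e_j)\| \,=\, \Vol(V'/L')^2,$$
where $V' = L'\otimes_\ZZ\RR$ and $B$ is the bilinear form associated to $q$. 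Hence $\deg(E') = 1/\|\omega(L')\|$, where $\|\cdot\|$ denotes the norm on $\Lambda^r V$ induced by $\Lambda^r(q)$.

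The condition $\deg(E')\geqslant a$ thus becomes $\|\omega(L')\|\leqslant 1/a$, i.e., $\omega(L')$ lies in the closed ball of radius $1/a$ in the finite-dimensional normed space $(\Lambda^r V,\|\cdot\|)$. Since $\Lambda^r L$ is a lattice (a discrete $\ZZ$-module of maximal rank) in this space, its intersection with any bounded set is finite. Combined with the injectivity of $L'\mapsto\pm\omega(L')$, this yields the claimed finiteness. No serious obstacle is expected, as this is essentially a classical geometry-of-numbers argument recast in the language of Arakelov vector bundles; the only point requiring care is the identification of the norm induced by $\Lambda^r(q)$ with the covolume of $L'$ inside $V'$, which is the standard Gram-determinant identity.
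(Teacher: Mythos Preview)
Your proof is correct and follows essentially the same route as the paper: both use the Pl\"ucker embedding $L'\mapsto \pm\,e_1\wedge\cdots\wedge e_r\in\Lambda^r L$, identify $\deg(E')$ with the inverse norm of this wedge vector via the Gram-determinant formula, and finish by the finiteness of lattice points in a bounded ball. The only cosmetic difference is that the paper first isolates the case $r=1$ and then explicitly phrases the general case as a reduction to rank~$1$ subbundles of $\Lambda^r(E)$, whereas you proceed directly.
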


\begin{proof} Let $W=L\otimes_\ZZ\QQ$. 
 Consider first the case $r=1$.  If $E'\subset E$ corresponds
to a 1-dimensional subspace $W'\subset W$, then $\deg(E) =1/\sqrt{ q(w')}$, where $w'\in W'\cap L=L'$
is one of two generators of this free abelian group of rank $1$. Since the number of $w'$
such that $q(w')\leqslant a$ is finite, our statement follows.

Consider now the case of arbitrary $r$ and use the Pl\"ucker embedding of the Grassmannian
$G(r,W)$  into $\PP(\Lambda^r(W))$. If $W'\subset W$ is an $r$-dimensional subspace
with $L'=W'\cap L$, then $\Lambda^r(W')\subset\Lambda^r(W)$ is a 1-dimensional
subspace, and
$\Lambda^r(W')\cap \Lambda^r_\ZZ(L) = \Lambda^r_\ZZ(L')$ is a free abelian group of rank 1
and a primitive sublattice in $\Lambda^r_\ZZ(L)$. Further, $\Lambda^r_\RR(V)$ is
equipped with the quadratic form $\Lambda^r(q)$, and $\deg(E')=1/\sqrt{\Lambda^r(q)(\xi')}$,
where $E'$ is the subbundle corresponding to $W'$ and $\xi'\in\Lambda^r_\ZZ(L')$ is one of
the two generators. We thus reduce to the case of subbundles of rank 1. 
\end{proof}

\vskip .2cm

Let $E'=(L',V',q')$ and , $E''=(L'',V'',q'')$ be two vector bundles on $\SZ$. We define $\Ext^1(E'', E')$
to be the set of admissible short exact sequences \eqref {eq:SES}
 modulo automorphisms of such sequences identical on $E'$ and $E''$.

\begin{prop}\label{prop:Ext-E''-E'}
The set $\Ext^1(E'', E')$ has a natural structure of a $C^\infty$-manifold
isomorphic to the torus $(\RR/\ZZ)^{n'n''}$, where $n'=\rk(E')$ and $n''=\rk(E'')$.
\end{prop}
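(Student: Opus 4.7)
The plan is to reduce the classification to parametrizing by a certain space of group homomorphisms. The first step is to establish a rigidity property for the quadratic vector space part of the extension. Given any admissible sequence as in \eqref{eq:SES}, form the orthogonal complement ${V'}^\perp\subset V$ with respect to $q$. Positive definiteness of $q$ gives $V=V'\oplus {V'}^\perp$, with $q$ splitting as $q'\oplus q|_{{V'}^\perp}$. Applying the definition of the pushforward from Definition \ref{def:SES-Bun}(3), the minimum of $q$ on an affine subspace $v_0+V'$ is attained at the unique point of that affine subspace that is orthogonal to $V'$; this point lies in ${V'}^\perp$. Hence, under the isomorphism ${V'}^\perp\cong V''$ induced by $j$, the form $q|_{{V'}^\perp}$ is identified with $q''$. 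Thus any extension is canonically isomorphic, as a sequence of quadratic vector spaces, to the trivial orthogonal sum $(V'\oplus V'',\,q'\oplus q'')$, and this isomorphism is unique among those fixing $E'$ and $E''$.

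With the vector space part thus normalized, an extension is determined by the choice of lattice $L\subset V'\oplus V''$ satisfying $L\cap V'=L'$ and surjecting onto $L''$. Since $L''$ is free abelian, any such $L$ admits a group-theoretic section of the form $\sigma(e'')=(a(e''),\,e'')$ for a homomorphism $a\colon L''\to V'$, and we recover $L=L'+\sigma(L'')$. Two sections yield the same lattice if and only if their difference lies in $\Hom(L'',L')$; conversely, every $a\in\Hom(L'',V')$ produces a valid $L$. The absence of nontrivial automorphisms of the extension fixing $E'$ and $E''$ follows from the rigidity above: such an automorphism $\phi$ must be an isometry of $(V,q)$ that fixes $V'$ pointwise and induces the identity on $V/V'$; decomposing with respect to the orthogonal splitting forces $\phi=\id$. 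We therefore obtain a natural bijection
\[
\Ext^1(E'',E')\;\cong\;\Hom(L'',V')/\Hom(L'',L').
\]

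To conclude, note that because $L''$ is free, $\Hom(L'',-)$ is exact, so applying it to $0\to L'\to V'\to V'/L'\to 0$ yields $\Hom(L'',V')/\Hom(L'',L')\cong\Hom(L'',V'/L')$. Since $L'\subset V'$ has maximal rank, $V'/L'$ is a torus of real dimension $n'$, and $\Hom(L'',V'/L')\cong (V'/L')^{n''}\cong (\RR/\ZZ)^{n'n''}$ as $C^\infty$-manifolds, where the smooth structure is inherited from the factor torus. The main technical obstacle is the rigidity statement in the first paragraph: one must verify carefully that the orthogonal splitting recovers the pushforward form $q''$ on the nose, using the variational characterization of $j_*q$. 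Once this is established, everything else reduces to standard bookkeeping with homomorphisms of free abelian groups.
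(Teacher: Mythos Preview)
Your proof is correct, and it takes a genuinely different route from the paper's. The paper fixes a splitting of the lattice sequence $L=L'\oplus L''$ and then parametrizes the set $\Lambda$ of quadratic forms $q$ on $V=V'\oplus V''$ satisfying $i^*q=q'$ and $j_*q=q''$; a matrix computation identifies $\Lambda$ with $\Hom_\RR(V'\otimes V'',\RR)$ via the off-diagonal block of the Gram matrix, and the residual action of $\Hom_\ZZ(L'',L')$ is by translation. You do the dual thing: you fix the quadratic form by invoking the orthogonal splitting (your rigidity step shows $(V,q)\cong(V'\oplus V'',q'\oplus q'')$ canonically), and then parametrize lattices $L$ with $L\cap V'=L'$ and $L\twoheadrightarrow L''$ by sections $a\in\Hom(L'',V')$ modulo $\Hom(L'',L')$. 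Your argument that automorphisms of the extension are trivial is exactly what is needed to pass from lattices to elements of $\Ext^1$, and it drops out cleanly from the orthogonal decomposition. The paper's approach buys an explicit description in coordinates (useful if one wants to compute with the extension class as a bilinear form), while yours is more conceptual and avoids the matrix bookkeeping; both land on the same quotient $\RR^{n'n''}/\ZZ^{n'n''}$.
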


\begin{proof}  For any short exact sequence
as in \eqref{eq:SES}, the induced short exact sequence of lattices
  necessarily splits. Let us fix a splitting $L=L'\oplus L''$ and the
induced splitting $V=V'\oplus V''$ of $\RR$-vector spaces, so that $i$ and $j$ become
the canonical embedding into and the projection from the direct sum. Let $\Lambda$
be the set of positive definite quadratic forms $q$ on $V$ such that
$i^*q=q'$ and $j_*q=q''$. By definition, $\Lambda$ is a closed subset in the
space of all positive definite quadratic forms on $V$ and so has a natural
topology.

The group $\Hom_\ZZ(L'', L')$ is identified with the group of automorphisms of the
split exact sequence
$$0\to L'\buildrel i\over\lra L'\oplus L''\buildrel j\over\lra L''\to 0$$
identical on $L', L''$. Therefore this group acts on $\Lambda$, and we have
$\Ext^1(E'', E')\,\,=\,\,\Lambda/\Hom_\ZZ(L'', L')$.

\begin{lemma} The map $\Lambda\buildrel \res\over\lra \Hom_\RR(V'\otimes V'',\RR)$
which sends $q$ into the induced pairing between the summands $V'$ and $V''$,
is a homeomorphism. This map takes the action of the group $\Hom_\ZZ(L'',L')$
on
$\Lambda$ into its action on  $\Hom_\RR(V'\otimes V'', \RR)$ by translations.
\end{lemma}

\noindent {\sl Proof:}
Fix a basis $e_1, ..., e_{n'}$ of $V'$, orthonormal with respect to $q$,
and a basis $v_1, ..., v_{n''}$ of $V''$. Let $B', B''$ be the
symmetric bilinear forms on $V', V''$ corresponding to $q', q''$, and let
$q$ be a quadratic form on $V$ with corresponding symmetric bilinear form $B$.
Then the condition $q\in\Lambda$ means:
\be\label{eq:q-in-Lambda}
\begin{gathered}
B(e_i, e_j) \,=\,\delta_{ij}\,=\,B'(e_j, e_j),\cr
B\biggl(v_p-\sum_{\mu =1}^{n'} B(v_p,e_\mu)\cdot e_\mu, \,\,
v_q-\sum_{\nu =1}^{n'} B(v_q,e_\nu)\cdot e_\nu\biggr)\,\,=\,\,B''(v_p, v_q).
\end{gathered}
\ee
Indeed, the  
minimum in the definition of $j_*q$ is given by the
orthogonal projection to $V'$ with respect to $B$,
and the left hand side of the second formula above involves
exactly such projections.

Denote by $X$ the matrix $\|B(v_p, e_\mu)\|$ of size $n''\times n'$, and let
$Y$ be the matrix $\|B(v_p, v_q)\|$ of size $n''\times n''$. From the first
condition in \eqref{eq:q-in-Lambda} we see that a quadratic form $q$
with $i^*q=q'$ is completely
determined by the datum of $X$ and $Y$, while the second equation implies that
$Y=B''-X\cdot X^t$, where $B''=\|B''(v_p, v_q)\|$. Therefore $q\in\Lambda$
is indeed completely defined by $X$, which is the matrix representative
of $\res(q)$. The action of elements of $\Hom_\ZZ(L'', L')$
in the matrices $X$ is the action by translation. This
proves the lemma and Proposition \ref{prop:Ext-E''-E'}. 
\end{proof}

\begin{rem}\label{rem:coherent-sheaves}
More generally, one can consider data $\Fc = (L,V,q)$ similar to the above but where $L$
is any finitely generated abelian group, $V=L\otimes_\ZZ \RR$ and $q$ is a positive definite quadratic
form on $V$. They correspond   to {\em coherent sheaves on $\SZ$
 locally free at infinity}. We get in this way a category $\Coh_{\neq\infty}(\SZ)$, with admissible short
 exact sequences defined similarly to Definition \ref{def:SES-Bun}. A more systematic
 theory should enlarge  $\Coh_{\neq\infty}(\SZ)$ by allowing a meaningful concept of sheaves with
  torsion at $\infty$. This will be done in a subsequent paper. For example,  sheaves supported
  at $\infty$ can be described in terms of two positive definite quadratic forms $q\leqslant q'$ on one $\RR$-vector
  space $V$, much in the same way as representing a finite abelian $p$-group
  as quotient of two free $\ZZ_p$-modules of the same rank.
   The role of elementary divisors is then played by the logarithms $\log\lambda_i(q:q')\in\RR_+$,
 of  the eigenvalues of $q$ with respect to $q'$. 

\end{rem}

\vfill\eject

\section{The Hall algebra.}\label{sec:hall-alg}
Let 
\[
Y_n\,\,=\,\, GL_n(\RR)/O_n
\]
be the space of quadratic forms on $\RR^n$. It is a $C^\infty$-manifold of dimension 
$n(n+1)/2$. It is well known that for large $N$ the congruence subgroup
\[
GL_n(\ZZ, N)\,\,=\,\,\bigl\{ \gamma\in GL_n(\ZZ): \,\,\gamma\equiv 1 \mod N\bigr\}
\]
acts on $Y_n$ freely, so $GL_n(\ZZ,N)\backslash Y_n$ is a $C^\infty$-manifold. The set
$\Bunn_n$ is the quotient of this manifold by the finite group $GL_n(\ZZ/N)$
and therefore has a structure of a $C^\infty$-orbifold. In particular, we can speak
about $C^\infty$-functions on $\Bunn_n$.  They are $C^\infty$-functions
on $GL_n(\RR)$,  left invariant under $GL_n(\ZZ)$ and right invariant under $O_n$,
i.e., $C^\infty$-automorphic forms in the classical sense. Let 
$$\label{eq:H-n}
H_n\,=\,C^\infty_0(\Bunn_n)\,\,=\,\,C^\infty_0\bigl( GL_n(\ZZ)\backslash GL_n(\RR)/O_n\bigr)
$$
 be the space of $C^\infty$-functions on $\Bunn_n$ with compact support.
 Consider the direct sum
$$
H\,=\,\bigoplus_{n=0}^\infty H_n, \quad H_0=\CC.
$$
Let $f\in H_m, g\in H_n$. We define their {\em Hall product}
$f*g$ to be the function $\Bunn_{m+n}\to\CC$ given by the formula
\be\label{eq:hall-product}
(f*g)(E)\,\,=\,\,\sum_{E'\subset E} \deg(E')^{n/2} \deg(E/E')^{-m/2} \cdot f(E') g(E/E'),
\ee
where the sum is over all subbundles $E'\subset E$ of rank $m$. 

\begin{prop}\label{prop:hall}
 (a) For every $E$ the sum in \eqref{eq:hall-product} is actually
finite, so $f*g$ is a well defined function.

(b) $f*g$ is again a $C^\infty$-function with compact support.

(c) The operation $f*g$ makes $H$ into a graded associative algebra, with
unit $1\in H_0$.

\end{prop}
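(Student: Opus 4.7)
The plan is to treat parts (a), (b), and (c) separately, in that order.

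For (a), I observe that a nonzero summand at a rank-$m$ subbundle $E' \subset E$ requires both $E' \in \supp(f) \subset \Bunn_m$ and $E/E' \in \supp(g) \subset \Bunn_n$, each of these supports being compact. Since $\deg: \Bunn_m \to \RR_+$ is continuous, the first condition forces $\deg(E')$ to lie in a compact subinterval of $\RR_+$, hence $\deg(E') \geq a$ for some $a > 0$ depending only on $f, g, E$. Proposition \ref{prop:finiteness-subbundles} then immediately yields finiteness of the index set.

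For compact support in (b), I would organize the relevant short exact sequences via Proposition \ref{prop:Ext-E''-E'}: the set of triples $(E', E'', \xi)$ with $E' \in \supp(f)$, $E'' \in \supp(g)$, and $\xi \in \Ext^1(E'', E')$ forms a torus bundle of rank $mn$ over the compact base $\supp(f) \times \supp(g)$, with compact total space. The continuous middle-term map sends this space into $\Bunn_{m+n}$, with image containing $\supp(f*g)$, which is therefore relatively compact. For smoothness I pass to the uniformization $GL_{m+n}(\RR)/O_{m+n}$, under which rank-$m$ subbundles of $E_g$ are canonically indexed by primitive rank-$m$ sublattices $L' \subset \ZZ^{m+n}$, a discrete set independent of $g$. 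For each fixed $L'$, the induced data $(E'_g, E_g/E'_g)$ depend smoothly on $g$. On a compact neighborhood of a fixed $g_0$, a uniform version of (a) (again via Proposition \ref{prop:finiteness-subbundles} combined with Mahler-type compactness) shows that only finitely many $L'$ can yield a nonzero term, so the pullback of $f*g$ to $GL_{m+n}(\RR)$ is locally a finite sum of smooth functions.

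For (c), the argument is purely formal bookkeeping. Given $f \in H_\ell$, $g \in H_m$, $h \in H_n$, expanding both $((f*g)*h)(E)$ and $(f*(g*h))(E)$ reindexes each as a sum over two-step flags $0 \subset E_1 \subset E_2 \subset E$ with $\rk(E_1) = \ell$ and $\rk(E_2) = \ell + m$, the summand in each case being $f(E_1) g(E_2/E_1) h(E/E_2)$ multiplied by an explicit degree weight. Substituting $\deg(E_2) = \deg(E_1) \deg(E_2/E_1)$ and $\deg(E/E_1) = \deg(E_2/E_1) \deg(E/E_2)$ shows that both weights collapse to the common expression
\[
\deg(E_1)^{(m+n)/2} \deg(E_2/E_1)^{(n-\ell)/2} \deg(E/E_2)^{-(\ell+m)/2},
\]
establishing associativity. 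The unit property of $1 \in H_0 = \CC$ is immediate, since the zero bundle is the unique rank-$0$ subbundle (respectively, quotient) of any $E$.

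The main obstacle I anticipate is the smoothness claim in (b): one must verify that over a compact neighborhood of $g_0$ the combinatorial index set of primitive sublattices contributing to the sum stays finite, rather than growing as $g$ is perturbed. Part (a) and the compact-support half of (b) use reduction-theoretic finiteness (Proposition \ref{prop:finiteness-subbundles}) and the torus structure of extensions (Proposition \ref{prop:Ext-E''-E'}) essentially out of the box; once uniform finiteness is in place, associativity is a routine manipulation of the degree weights.
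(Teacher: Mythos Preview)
Your treatment of (a) and (b) matches the paper's argument essentially verbatim: compact support of $f$ bounds $\deg(E')$ below, Proposition~\ref{prop:finiteness-subbundles} gives finiteness; the torus description of $\Ext^1$ from Proposition~\ref{prop:Ext-E''-E'} gives compact support of $f*g$; and locally constant indexing of subbundles by primitive sublattices gives smoothness.

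There is, however, a genuine gap in your proof of (c). You describe the reindexing of $(f*(g*h))(E)$ over two-step flags as ``purely formal bookkeeping,'' but it is not. Expanding the definition gives a sum over pairs $(E_1, E_2')$ with $E_1\subset E$ and $E_2'\subset E/E_1$, with summand involving $g(E_2')$ and $h\bigl((E/E_1)/E_2'\bigr)$. To rewrite this over flags $E_1\subset E_2\subset E$ you need the bundle isomorphisms
\[
E_2' \,\cong\, E_2/E_1, \qquad (E/E_1)/E_2' \,\cong\, E/E_2
\]
in $\Bun$, not merely at the level of lattices. The second is easy (functoriality of pushforward of quadratic forms under composition of surjections), but the first is the nontrivial statement that for the Cartesian square
\[
\xymatrix{V_2 \ar[r]^{i} \ar[d]_{\pi'} & V \ar[d]^{\pi} \\ V_2/V_1 \ar[r]^{i'} & V/V_1}
\]
one has $\pi'_*\, i^* q = i'^*\, \pi_* q$: restricting the quotient form agrees with taking the quotient of the restricted form. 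This base-change identity for positive definite quadratic forms is exactly what the paper isolates and proves (Lemma~\ref{lem:bijection} and Proposition~\ref{prop:base-change}); without it you cannot conclude $g(E_2')=g(E_2/E_1)$, and your degree identities $\deg(E/E_1)=\deg(E_2/E_1)\deg(E/E_2)$ likewise presuppose it. The check is short (an orthogonal-projection computation), but it is the actual content of associativity in this category and should not be swept into ``bookkeeping.''
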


We will call the algebra $H$ the {\em Hall algebra of $\SZ$}. In this paper
will be particularly interested in the subalgebra $SH\subset H$ generated by
$H_1=C^\infty_0(\RR_+)$. We will call $SH$ the {\em spherical Hall algebra},
adopting  the terminology of 
\cite{schiffmann-vasserot:higher-genus}, where a similar algebra was studied for
the case of a curve over a finite field. 

\begin{rem}\label{rem:hall-product}

(a) The quantity 
$$\langle E/E', E'\rangle \,\,=\,\,\deg(E')^{n/2} \deg(E/E')^{-m/2}
\,\,= \,\, \sqrt{\deg\,\underline\Hom(E/E', E')   } $$
is the analog of the Euler form used by Ringel 
\cite{ringel} to twist the multiplication in
the Hall algebra of representations of a quiver. In our case, as well as in
the case of curves over a finite field 
\cite{kapranov, schiffmann-vasserot:higher-genus}, twisting by this form simplifies
the form of commutation relations. 

\vskip .2cm

(b) One can get  larger algebras by relaxing
the condition of compact support to
that of sufficiently rapid decay at infinity.  
More generally, there are interesting cases when $f$ and $g$ do not
 have rapid decay, but $f*g$ still makes sense as a convergent series.
 
\end{rem}

 \vskip .2cm

 \noindent {\sl Proof of Proposition \ref{prop:hall}:}
 (a) Since $f$ is with compact support, there is $A>1$
such that $f(E')=0$ unless $\deg(E')\in [1/A, A]$. By Proposition
\ref{prop:finiteness-subbundles} all but finitely many $E'\subset E$
have $\deg(E')<A$, so that the sum in (\ref{eq:hall-product}) is indeed finite.

\vskip .2cm

(b) To see that $f* g$ is smooth, suppose that $E_1$ and $E_2$ are
close to each other in $\Bunn_{m+n}$. Then the corresponding
lattices $L_1$ and $L_2$ are identified in a canonical fashion. Therefore the sets of subbundles 
$E'_1\subset E_1$ and $E'_2\subset E_2$ of rank $m$, are identified,
and so we have a bijection between the sets of summands
in $(f*g)(E_1)$ and $(f*g)(E_2)$. Next, the number of nonzero summands
in both sums is bounded by the same number by the continuity of $f$
and $g$, so we can view $f*g$ as a sum of finitely
many $C^\infty$-functions.

To see that $f*g$ has compact support, let
 $\Sigma_1\subset\Bunn_m$ be a compact set supporting $f$, and
 $\Sigma_2$ be a compact set supporting $g$. For any $E_1\in\Sigma_1$,
 $E_2\in\Sigma_2$ the set of $E\in\Bunn_{m+n}$ that can fit into a
 sequence \eqref{eq:SES},
  is a compact topological space. Indeed, it is the image of a
 continuous map $\Ext^1(E_2, E_1)\to \Bunn_{m+n}$, whose source is
 a compact torus.  Let
 $F$ be the total space of the fibration over $\Sigma_1\times\Sigma_2$
 with fiber over $(E_1, E_2)$ being $\Ext^1(E_2, E_1)$. Then $F$ 
 is compact, while the
 support of $f*g$ is contained in the image of $F$ under a natural
 continuous map into $\Bunn_{m+n}$.
 
 \vskip .2cm
 
 (c) To prove associativity, let 
 $f\in H_{n_1}$, $g\in H_{n_2}$, $h\in H_{n_3}$. Then for
$E\in\Bunn_{n_1+n_2+n_3}$ we have
$$
((f*g)*h)(E)\,\,=\,\,\sum_{E_1\subset E_2\subset E}
d_1^{n_2+n_3\over 2} d_2^{-n_1+n_3\over 2} d_3^{-n_1-n_2\over 2} \cdot 
 f(E_1)\cdot g(E_2/E_1) 
\cdot h(E/E_2),
$$
where $E_1$ runs over subbundles of $E$ of rank $n_1+n_2$, and $E_1$
runs over subbundles of $E_2$ of rank $n_1$, and we have denoted
$$d_1=\deg(E_1),\,\,d_2=\deg(E_2/E_1),\,\,d_3=\deg(E/E_2).$$
 On the other hand
 $$
(f*(g*h))(E)\,\,=\,\,\sum_{E_1\subset E\atop E'_2\subset E/E_1}
\delta_1^{n_2+n_3\over 2} \delta_2^{-n_1+n_3\over 2} \delta_3^{-n_1-n_2\over 2} \cdot 
 f(E_1)\cdot  g(E'_2)
\cdot h\bigl(
(E/E_1)/E'_2\bigr),
$$
where we have denoted 
$$\delta_1=\deg(E_1),\,\, \delta_2=\deg(E'_2),\,\,
\delta_3=\deg((E/E_1)/E'_2).$$
Let $F$ be the set over which the first sum is extended, i.e., the set of
admissible filtrations $E_1\subset E_2\subset E$ with $\rk(E_1)=n_1$
and $\rk(E_2)=n_1+n_2$. Similarly, let $F_2$ be the set over which the second
sum is extended, i.e., the set of pairs $(E_1, E'_2)$, where $E_1\subset E$
is a subbundle of rank $n_1$, and
$E'_2\subset E/E_1$ is a subbundle of rank $n_2$.
We have a map $\rho: F\to F'$ sending $(E_1\subset E_2\subset E)$
into $(E_1, E_2':=E_2/E_1)$. The summand corresponding to any $\phi\in F$
is equal to the summand corresponding to $\rho(\phi)\in F'$. So our statement
reduces to the following.

\begin{lemma}\label{lem:bijection} The map $\rho$ is a bijection. 
\end{lemma}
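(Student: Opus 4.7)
The plan is to construct an explicit inverse map $\sigma : F' \to F$ and then verify that the relevant data (lattices, vector spaces, and quadratic forms) match up with the admissibility conditions of Definition~\ref{def:SES-Bun}. Given a pair $(E_1, E_2') \in F'$, where $E_1 = (L_1, V_1, q_1) \subset E$ is a rank $n_1$ subbundle and $E_2' = (L_2', V_2', q_2') \subset E/E_1$ is a rank $n_2$ subbundle, let $\pi : E \to E/E_1$ be the admissible epimorphism. Define $E_2 = (L_2, V_2, q_2)$ by setting $L_2 := \pi^{-1}(L_2')$ (a subgroup of $L$), $V_2 := \pi^{-1}(V_2')$ (a subspace of $V$), and $q_2 := q|_{V_2}$. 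Then put $\sigma(E_1, E_2') := (E_1 \subset E_2 \subset E)$.

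Next I would verify that the output lies in $F$, that is, that both $E_1 \subset E_2$ and $E_2 \subset E$ are admissible subbundles. For $E_2 \subset E$: the lattice $L_2$ is primitive in $L$ since $L_2/L_1 = L_2'$ is primitive in $L/L_1$ by assumption, and the quadratic form on $V_2$ is defined as $q|_{V_2}$, so condition (2) of Definition~\ref{def:SES-Bun} holds. For $E_1 \subset E_2$: the form on $V_1$ is $q|_{V_1} = (q|_{V_2})|_{V_1} = q_2|_{V_1}$, so again the pullback condition holds. The main point to check is condition (3), namely that the pushforward $(j_2)_*(q_2)$ on $V_2/V_1 = V_2'$ agrees with $q_2'$. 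But $q_2'$ is by hypothesis the restriction to $V_2'$ of the pushforward $\pi_*(q)$, and for any $v' \in V_2'$ one has the tautological identity
\[
\{v \in V : \pi(v) = v'\} \;=\; \{v \in V_2 : \pi(v) = v'\},
\]
since $V_2 = \pi^{-1}(V_2')$. Taking the minimum of $q$ over this common set gives $q_2'(v') = (j_2)_*(q_2)(v')$, as desired. Thus $\sigma(F') \subset F$.

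Finally, the verification that $\rho \circ \sigma = \Id$ and $\sigma \circ \rho = \Id$ is essentially formal. For $\rho\sigma = \Id$: by construction $E_2/E_1$ has underlying data $(L_2/L_1, V_2/V_1, (j_2)_*(q_2)) = (L_2', V_2', q_2') = E_2'$. For $\sigma\rho = \Id$: starting from a filtration $E_1 \subset E_2 \subset E$ in $F$, the preimage in $E$ of the lattice and vector subspace underlying $E_2/E_1$ is $L_2, V_2$ respectively, and the pulled-back form equals the original form on $V_2$ precisely by the admissibility condition for $E_2 \subset E$.

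The only nontrivial point in all of this is the compatibility of pullback and pushforward of quadratic forms across the two steps of the filtration, which is exactly the content of the displayed identity above. Once one notices that restriction and preimage commute set-theoretically, the argument collapses to bookkeeping; the absence of any additional $\min$-versus-pullback mismatch is what makes the bijection go through in the Arakelov setting, in parallel with the classical bijection between two-step filtrations and iterated extensions in an exact category.
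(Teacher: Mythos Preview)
Your proof is correct and takes essentially the same approach as the paper. The paper isolates your ``tautological identity'' as a separate base-change proposition for quadratic forms on any Cartesian square of vector spaces (with injections horizontal and surjections vertical), namely $j'_*i^*q = i'^*j_*q$; your direct verification that the fiber of $\pi$ over $v'\in V_2'$ already lies in $V_2=\pi^{-1}(V_2')$ is exactly the Cartesian-square argument specialized to the situation at hand.
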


\noindent {\sl Proof:} An element of $F$ has the form
$$(L_1,V_1,Q_1)\subset (L_2,V_2,q_2)\subset (L,V,q)=E,$$
where $L_1\subset L_2\subset L$ is a filtration by primitive sublattices, and
$q_i=q|_{V_i}$, $i=1,2$. An image of such an element by $\rho$ is the pair
$\bigl( (L_1, V_1, q_1),\ (L'_2, V'_2, q'_2)\bigr)$, where $(L_1, V_1, q_1)$
is as above, while $L'_2=L_2/L_1\subset L/L_1$, and $q'_2=\pi'_*(q_2)$,
with $\pi': V_2\to V_2/V_1=V'_2$ being the canonical projection.

\vskip .2cm

On the other hand, a  general element of $F'$ is a pair
$\bigl( (L_1, V_1, q_1),\ (L'_2, V'_2, q'_2)\bigr)$, where $(L_1, V_1, q_1)$
is as above, while $L'_2\subset L/L_1$ is an arbitrary primitive sublattice
of rank $n_2$, and $V'_2=L'_2\otimes\RR$ and $q'_2$ is the restriction
 to $V'_2$
of the quotient quadratic form $\pi_*(q)$ for the projection $\pi: V\to V/V_1$,
i.e., $q'_2=(i')^*(\pi_*q)$. We
have therefore a Cartesian square of $\RR$-vector spaces
$$
\xymatrix{V_2 \ar[r]^{i} \ar[d]_{\pi'} &V\ar[d]^\pi\cr
V_2'=V_2/V_1 \ar[r]^{i'}&V/V_1
}
$$
with $\pi, \pi'$ surjective and $i, i'$ injective. 
 We claim that
$\pi'_*i^*(q) = i'^*\pi_*(q)$, and hence $\rho(F)=F'$.
  This is a particular case of the following base change property for quadratic forms.

\begin{prop}\label{prop:base-change}
 Let
$$
\xymatrix{U_2 \ar[r]^{i_1} \ar[d]_{j'} &U\ar[d]^j\cr
U'_2 \ar[r]^{i_2}&U
}
$$
be a Cartesian square of $\RR$-vector spaces, such that $i_1, i_2$
are injective and $j,j'$ are surjective. Then for any positive
definite quadratic form $q$ on $U$ we have the equality
$j'_*i_1^* q \,=\, i_2^* j_* q$
of quadratic forms on $U'_2$. 
\end{prop}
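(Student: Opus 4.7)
The plan is to unwind all four definitions and then observe that the Cartesian property of the square identifies the two subsets of $U$ over which one is minimizing $q$. For a fixed $u'_2\in U'_2$, the definitions of pullback and pushforward give
\[
(j'_* i_1^* q)(u'_2)\,=\,\min_{u_2\in (j')^{-1}(u'_2)} q(i_1(u_2)),\qquad
(i_2^* j_* q)(u'_2)\,=\,\min_{u\in j^{-1}(i_2(u'_2))} q(u).
\]
So both quantities are minima of the same function $q$: the first taken over the image $i_1((j')^{-1}(u'_2))\subset U$, the second over $j^{-1}(i_2(u'_2))\subset U$.

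The core step is to check that these two subsets of $U$ actually coincide. The Cartesian hypothesis identifies $U_2$ with $\{(u,v)\in U\times U'_2 : j(u)=i_2(v)\}$, with $i_1$ and $j'$ being the two projections. Since $i_2$ is injective, this identifies $U_2$, via $i_1$, with the linear subspace $j^{-1}(i_2(U'_2))\subset U$; under this identification $j'$ becomes the map $u\mapsto i_2^{-1}(j(u))$. Consequently, for each $u'_2$, the map $i_1$ sends $(j')^{-1}(u'_2)\subset U_2$ bijectively onto the affine subspace $j^{-1}(i_2(u'_2))\subset U$. The two minimizations are then literally the same, and the equality follows.

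The only subsidiary point is that these minima are actually attained. Since $j$ is surjective, $j^{-1}(i_2(u'_2))$ is a nonempty affine subspace of the finite-dimensional real vector space $U$, and $q$ is positive definite, hence strictly convex and proper, so its restriction to this affine subspace attains a unique minimum (which is precisely the point closest to $0$ with respect to the inner product associated to $q$). There is no real obstacle here: the entire content of the proposition is the formal observation that pushforward along a surjection commutes with pullback along an injection whenever the square is Cartesian, because the fibers of $j'$ match, via $i_1$, with the corresponding fibers of $j$.
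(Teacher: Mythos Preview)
Your proof is correct and follows essentially the same route as the paper's: both unwind the definitions to express each side as a minimum of $q$ over a subset of $U$, and then use the Cartesian property to identify $i_1\bigl((j')^{-1}(u'_2)\bigr)$ with $j^{-1}(i_2(u'_2))$. Your version simply adds a bit more detail---the explicit fiber-product description of $U_2$ and the remark that the minimum is attained---but the argument is the same.
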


\begin{proof} Let $u'_2\in U'_2$. Then
$$(j'_*i_1^*q)(u'_2)\,\,=\,\,\min_{u_2: \, j'(u_2)=u'_2} (i_1^*q)(u_2)\,\,=\,\,
\min_{u_2:\, j'(u_2)=u'_2} q(i_1(u_2)).
$$
Since the square is Cartesian, $i_1$ identifies $(j')^{-1}(u'_2)$
with $j^{-1}(i_2(u'_2))$, so the last minimum is equal to
$$\min_{u:\, j(u)=i_2(u'_2)} q(u)\,\,=\,\,(j_*q)(i_2(u'_2)).$$
\end{proof}
 
  This finishes the proof of 
Lemma \ref{lem:bijection} as well as Proposition \ref{prop:hall}.

\begin{rem} One can extend the definition of  the Hall algebra to the category $\Coh_{\neq\infty}(\SZ)$
defined as in Remark \ref{rem:coherent-sheaves}, using the concept of admissible
exact sequences outlined there. The algebra thus obtained will be a semidirect
product of $H$ and the Hall algebra of the category of finite abelian groups, similarly to
\cite{KSV:curves}, \S 2.6. 

\end{rem}

 \vfill\eject

\section{The Mellin transform.}

A standard tool in the theory of quantum affine algebras is the use of generating
functions, i.e., passing from a collection of coefficients $(c_\alpha)_{\alpha\in\ZZ^n}$
to the Laurent series
\be\label{eq:laurent}
F(t) \,\,=\,\,\sum_{\alpha\in\ZZ^n} c_\alpha t^\alpha, \quad t=(t_1, ..., t_n)\in(\CC^*)^n, \quad
t^\alpha = \prod t_\nu^{\alpha_\nu}.
\ee
This is just the Fourier transform on the free abelian group $\ZZ^n$, but understood
in a more pragmatic way: we do not necessarily restrict to unitary characters
(they form the real torus $|t_i|=1$) but pay attention to the domains
of convergence in the space $(\CC^*)^n$ of all characters. 

\vskip .2cm

 A typical free abelian group to which the above is applied is,
in the Hall algebra approach,
$\Pic(X)/\{\text{torsion}\}=\ZZ$,
where $X$ is a smooth projective curve over $\FF_q$, see
\cite{kapranov, schiffmann-vasserot:higher-genus}, 
 In the present paper the corresponding role is played by the group
 $\Pic(\SZ)=\RR_+$. The Fourier transform on $\RR_+^n$ is known
 as the {\em Mellin transform}. We now give a summary of its properties 
 from the same pragmatic standpoint as above.
 
 \vskip .2cm
 
 Unitary characters of $\RR_{+}^n$ have the form
$$a=(a_1, ..., a_n)\,\longmapsto a^s \,=\, \prod a_\nu^{s_\nu}, \quad 
 \quad s_\nu\in i\,\RR\subset\CC,\,\, a_\nu^{s_\nu}=e^{s_\nu \log(a_\nu)},$$
and the Haar measure is  $d^*a=\prod da_\nu/a_\nu$.
Accordingly, the Mellin transform of a function (or a distribution) $f(a)$ on $\RR_{+}^n$ is
 the  integral
\be\label{eq:mellin}F(s)=(\Mc f)(s) \,\,=\,\,\int_{a\in\RR_+^n}  f(a) a^s d^*a.
\ee
Here, a priori, $s\in i\,\RR^n$, but we are interested in
  allowing the $s_i$  to vary in the complex domain,
i.e., in considering not necessarily unitary characters.  
The group isomorphism
 \be\label{eq:exponential}
\exp: \RR^n\buildrel \sim\over\lra\RR_+^n.
\ee
transforms the Mellin integral into the standard Fourier
integral on $\RR^n$.

\begin{ex}[(Paley-Wiener theorem)]
  If $f(a)$ has compact support, then $(\Mc f)(s)$ converges for any $s\in\CC^n$, i.e., $\Mc f$
is an entire function, analogously to the case of a Laurent series in \eqref{eq:laurent}
being a Laurent polynomial.  
Recall  that an entire function $F(s)$, $s\in\CC^n$, is called a 
{\em Paley-Wiener function}, if there is a constant $B>0$ and, for every
$N>0$ there is a $c_N>0$ such that
$$
|F(s)|\, \leqslant c_N (1+\|s\|)^{-N} e^{B\cdot \|\RRe(s)\|}.
$$
This means, in particular, that $F$ has a faster than polynomial decay on each vertical
subspace $\{\sigma_0+i\RR^n, \, \sigma_0\in\RR^n\}$, while allowed to have exponential growth on
any horizontal subspace. 
We denote by $\PW(\CC^n)$ the space of Paley-Wiener functions on $\CC^n$.
The  Paley-Wiener theorem   says:

\begin{prop} The Mellin transform $\Mc$ identifies $C^\infty_0(\RR_{+}^n)$ with
$\PW(\CC^n)$.
\end{prop}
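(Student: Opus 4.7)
The plan is to reduce to the classical Paley--Wiener theorem for the Fourier transform on $\RR^n$ via the exponential isomorphism \eqref{eq:exponential}. Setting $a_\nu = e^{x_\nu}$ and $\tilde f(x) := f(e^{x_1}, \ldots, e^{x_n})$ transforms the Haar measure $d^*a$ into Lebesgue measure $dx$, turns $a^s$ into $e^{\langle s, x \rangle}$, and converts the Mellin integral \eqref{eq:mellin} into the two-sided Laplace transform
\begin{equation*}
F(s) \,\,=\,\, \int_{\RR^n} \tilde f(x)\, e^{\langle s, x \rangle}\, dx.
\end{equation*}
Since $\exp$ is a diffeomorphism, $f \in C^\infty_0(\RR_+^n)$ if and only if $\tilde f \in C^\infty_0(\RR^n)$, so it suffices to prove that this Laplace transform sets up a bijection between $C^\infty_0(\RR^n)$ and $\PW(\CC^n)$.

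For the forward direction, if $\tilde f$ is supported in $\{\|x\| \leqslant B\}$, absolute convergence of the integral for any $s \in \CC^n$ is immediate and shows $F$ is entire, with the crude estimate $|F(s)| \leqslant \|\tilde f\|_{L^1}\, e^{B\|\RRe(s)\|}$. The rapid decay in the imaginary direction will come from iterated integration by parts: for any multi-index $\alpha$, the absence of boundary terms yields
\begin{equation*}
s^\alpha F(s) \,\,=\,\, (-1)^{|\alpha|}\int_{\RR^n}(\partial^\alpha \tilde f)(x)\, e^{\langle s, x \rangle}\, dx,
\end{equation*}
which is bounded by $C_\alpha\, e^{B\|\RRe(s)\|}$. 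Summing over $|\alpha|\leqslant N$ gives the required Paley--Wiener estimate.

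For the converse, given $F \in \PW(\CC^n)$, the idea is to recover $\tilde f$ by Fourier inversion along the imaginary subspace:
\begin{equation*}
\tilde f(x) \,\,:=\,\, (2\pi)^{-n}\int_{\RR^n} F(i\xi)\, e^{-i \langle \xi, x\rangle}\, d\xi.
\end{equation*}
The rapid decay of $F$ on $i\RR^n$ makes this converge absolutely, and differentiating under the integral sign (with decay again absorbing the polynomial factors) shows $\tilde f \in C^\infty(\RR^n)$. The crucial point is to establish compact support. Using entirety of $F$ together with its Paley--Wiener bounds, Cauchy's theorem allows shifting the contour to $\sigma + i\RR^n$ for an arbitrary $\sigma \in \RR^n$, producing
\begin{equation*}
\tilde f(x) \,\,=\,\, (2\pi)^{-n}\int_{\RR^n} F(\sigma+i\xi)\, e^{-(\sigma+i\xi)\cdot x}\, d\xi.
\end{equation*}
If $\|x\| > B$, take $\sigma = t x/\|x\|$ and let $t\to +\infty$: the Paley--Wiener bound gives an integrand dominated by $C_N(1+|\xi|)^{-N} e^{t(B - \|x\|)}$, which tends to zero, forcing $\tilde f(x) = 0$. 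Thus $\tilde f$ is supported in $\{\|x\|\leqslant B\}$, so $f := \tilde f \circ \log \in C^\infty_0(\RR_+^n)$, and analytic continuation of the identity $F(i\xi) = \int \tilde f(x) e^{i\langle \xi, x\rangle}dx$ (both sides are entire in $s$, since $\tilde f$ has compact support) yields $\Mc f = F$.

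The only substantive step is the contour-shift combined with the optimization over the direction of $\sigma$ in the converse direction: this is what turns the Paley--Wiener growth estimate into honest compact support. Everything else is formal bookkeeping around the diffeomorphism $\exp:\RR^n\to\RR_+^n$ and the classical Fourier-transform arithmetic.
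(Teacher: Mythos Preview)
Your proof is correct and follows exactly the same reduction as the paper: transport the statement to $\RR^n$ via $\exp$ and invoke the classical Paley--Wiener theorem for the Fourier transform. The only difference is that the paper simply cites Reed--Simon, Vol.~II, Thm.~IX.11 for the classical result, whereas you have written out the standard proof (integration by parts for the forward direction, contour shift for compact support in the converse); both are fine.
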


\begin{proof}
The classical formulation, see, e.g.,  \cite{reed-simon}, Vol. II, Thm. IX.11,
is  for the Fourier transform of compactly supported functions on $\RR^n$.
The case of the Mellin transform reduces to this via  $\exp$. \end{proof}
\end{ex}

 An important point about series \eqref{eq:laurent}
 is that one (meromorphic) function
 can have different Laurent expansions in different regions, while
 the region of convergence of each expansion is ``logarithmically convex",
 i.e., is the preimage of a convex open set $\Delta\subset\RR^n$ under the map
 \[
 \lambda: (\CC^*)^n\lra\RR^n, \quad (t_i) \mapsto (\log |t_i|). 
 \]
 We now review the corresponding features of Mellin expansions. Unlike
 in the case of Laurent series, these features are less familiar, and
 a precise treatment involves L. Schwartz's theory of Fourier transform
 for distributions.

 \vskip .2cm
 
 For a $C^\infty$-manifold or orbifold $M$ we denote by
 $\Dist(M)=C^\infty_0(M)'$ the space of distributions on $M$. 
 Let $\Sc(\RR^n)$ be the space of Schwartz functions on $\RR^n$, and
$\Dc(\RR^n)=\Sc(\RR^n)'\subset\Dist(\RR^n)$ be the dual space of tempered distributions,
see \cite{reed-simon}, Vol. I, \S V.3. 
Recall that a $C^\infty$-function lies in $\Dc(\RR^n)$ if and only if it
has at most polynomial growth.  

We define $\Sc(\RR_+^n)$ and $\Dc(\RR_+^n)$,  the spaces
 of Schwartz functions and  tempered distributions on $\RR_+^n$,
by means of the group isomorphism $\exp$ of 
\eqref{eq:exponential}. 
 For $f\in\Dc (\RR_+^n)$ we define $\Mc f$
to be the tempered distribution on $i\RR^n$ given by the Fourier-Schwartz
transform of $f\circ\exp$. 

\vskip .2cm

For a distribution $f\in \Dist(\RR_+^n)$ we denote by $\Temp(f)$ and call the
{\em tempering set} of $f$,  the set of $\sigma\in\RR^n$ such that $f(a)\cdot a^\sigma$ is
a tempered distribution. It is known (see \cite{reed-simon}, Vol. II, Lemma after Th. IX. 14.1)
that $\Temp (f)$ is a convex subset in $\RR^n$.  We say that $f$ is {\em temperable}, if
$\Temp(f)$ has non-empty interior. For any convex open set  $\Delta\subset\RR^n$
we denote by $U_\Delta = \{ s\in\CC^n |\, \RRe(s)\in\Delta\}$  the corresponding tube domain.

 \begin{prop}\label{prop:mellin-convergence}
 Let $f$ is a temperable distribution on $\RR_+^n$, and $\Delta$ be the interior of $\Temp(f)$.
Then $F(s) = (\Mc f)(s)$ is an analytic function in $U_\Delta$, which
has an at most  polynomial growth on each vertical subspace $\sigma_0+i\RR^n$, $\sigma_0\in\Delta$.
\end{prop}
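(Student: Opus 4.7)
The plan is to reduce to a classical Paley--Wiener--Schwartz theorem for tempered distributions with exponential decay, using the exponential isomorphism \eqref{eq:exponential}. Set $\tilde f = f\circ\exp\in\Dist(\RR^n)$ and, for $\sigma\in\Temp(f)$, let $g_\sigma := e^{\sigma\cdot x}\tilde f\in\Sc'(\RR^n)$. By the definition of $\Mc f$ preceding the proposition, the tempered distribution $\tau\mapsto F(\sigma+i\tau)$ is (up to sign conventions) the Fourier--Schwartz transform of $g_\sigma$ on $\RR^n$. It therefore suffices to show that, for each $\sigma_0\in\Delta$, the distribution $\hat g_{\sigma_0}$ is actually a function which extends jointly analytically in $s=\sigma+i\tau$ to a tube neighborhood of $\sigma_0+i\RR^n$, with polynomial growth in $\tau$ on each such vertical slice.

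The main step is to upgrade temperability into genuine exponential decay, exploiting the openness of $\Delta$. Fix $\sigma_0\in\Delta$ and choose $\epsilon>0$ with $\bar B(\sigma_0,2\epsilon)\subset\Delta$; then $g_{\sigma_0+h}=e^{h\cdot x}g_{\sigma_0}$ is tempered for every real $h$ with $\|h\|_\infty<2\epsilon$. Specializing to $h=\pm\epsilon e_j$ and averaging gives $\cosh(\epsilon x_j)\,g_{\sigma_0}\in\Sc'(\RR^n)$ for each coordinate $j$, and iterating coordinate by coordinate yields $e^{\epsilon\|x\|_1}g_{\sigma_0}\in\Sc'(\RR^n)$. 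Next one invokes the Paley--Wiener--Schwartz statement for exponentially decaying tempered distributions: if $e^{\epsilon\|x\|_1}g\in\Sc'(\RR^n)$, then $\hat g$ extends to a function $G(\zeta)$ holomorphic in the tube $\{\zeta\in\CC^n:\|\Im\zeta\|_\infty<\epsilon\}$, with polynomial growth on each horizontal slice $\Im\zeta=\eta_0$. Substituting $\zeta=\tau-i(\sigma-\sigma_0)$ translates this directly into analyticity of $F$ on the tube neighborhood $U_{B(\sigma_0,\epsilon)}$ and polynomial growth on $\sigma+i\RR^n$ for every $\sigma$ in that neighborhood, and letting $\sigma_0$ range over $\Delta$ covers all of $U_\Delta$.

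The main obstacle is verifying the Paley--Wiener--Schwartz input in the form stated, since it is not quite the familiar version (compactly supported distributions and entire functions of exponential type). The clean reduction uses Schwartz's structure theorem to write $e^{\epsilon\|x\|_1}g_{\sigma_0}=\partial^\alpha u$ for some multi-index $\alpha$ and a polynomially-bounded continuous function $u$, so that formally $\hat g_{\sigma_0}(\zeta)=(i\zeta)^\alpha \cdot \widehat{e^{-\epsilon\|x\|_1}u}(\zeta)$, reducing the claim to the elementary case of an $L^1$-function multiplied by an exponential weight (compare Reed--Simon Vol.~II, discussion around Thm.~IX.13). Joint analyticity in the complex variable $s$, rather than a mere $\sigma$-indexed family of Fourier transforms, comes from the absolute convergence of the Taylor expansion of $e^{(\sigma-\sigma_0)\cdot x}$ in $\sigma-\sigma_0$ against the exponentially decaying distribution $g_{\sigma_0}$, which lets one differentiate under the Fourier transform.
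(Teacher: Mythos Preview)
Your approach is correct and genuinely different from the paper's. The paper argues holomorphy by decomposing $\RR_+^n$ into its $2^n$ ``orthants'' $(\RR_+^n)_\varepsilon$ indexed by sign patterns $\varepsilon$, observing that on each orthant the character $a^s$ decays exponentially when $s$ lies in the matching half-space $\CC^n_\varepsilon$, so each partial Mellin integral extends holomorphically there; the interior-point hypothesis then lets one translate and sum. For polynomial growth the paper simply notes that $F|_{\sigma_0+i\RR^n}$ is simultaneously a tempered distribution and a real-analytic function. Your route instead packages the openness of $\Delta$ into a single exponential-decay statement and then invokes the structure theorem to reduce to Fourier transforms of $L^1$ functions. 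Your argument is more self-contained for the polynomial-growth claim: the paper's one-line ``tempered plus real-analytic implies polynomial growth'' is not literally true without further input (e.g.\ $t\mapsto e^{t}\sin(e^{2t})$ is entire and defines a tempered distribution on $\RR$ but is not polynomially bounded), whereas your structure-theorem reduction yields an explicit $(1+|\zeta|)^{|\alpha|}$ bound.

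Two small technical points to clean up. First, $e^{\epsilon\|x\|_1}$ is not smooth along the coordinate hyperplanes, so the multiplier argument and the later Leibniz manipulation are awkward; it is cleaner to stop at the weight $\prod_j\cosh(\epsilon x_j)$ you already produced, since $1/\prod_j\cosh(\epsilon x_j)$ is smooth with all derivatives bounded by $Ce^{-\epsilon\|x\|_1}$. Second, the displayed ``formal'' identity $\hat g_{\sigma_0}(\zeta)=(i\zeta)^\alpha\,\widehat{e^{-\epsilon\|x\|_1}u}(\zeta)$ is not correct as written, because $g_{\sigma_0}=\phi\,\partial^\alpha u$ with $\phi=1/\prod_j\cosh(\epsilon x_j)$, not $\partial^\alpha(\phi u)$. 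One should commute using $\phi\,\partial^\alpha u=\sum_{\beta\le\alpha}(-1)^{|\alpha-\beta|}\binom{\alpha}{\beta}\partial^\beta\bigl((\partial^{\alpha-\beta}\phi)\,u\bigr)$; each $(\partial^{\alpha-\beta}\phi)\,u$ is continuous and dominated by a polynomial times $e^{-\epsilon\|x\|_1}$, hence has Fourier transform holomorphic and bounded on the tube $\|\Im\zeta\|_\infty<\epsilon$, and the factors $(i\zeta)^\beta$ give the polynomial growth. With these adjustments your argument goes through.
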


\begin{proof}  To see holomorphy, it is enough to assume that $0$ is an interior
point of $T(f)$ and to show that $\Mc f$ is holomorphic in an open neighborhood of $i\RR^n$. 
 For a sequence of signs $\varepsilon=(\varepsilon_1, ..., \varepsilon_n)$,
$\varepsilon_i\in\{\pm 1\}$ let $(\RR_+^n)_\varepsilon\subset\RR_+^n$ be the domain
given by conditions $a_i^{\epsilon_i}>1$, and $\CC^n_\varepsilon\subset \CC^n$ 
be given by the condition
$\varepsilon_i \RRe(s_i)<0$. Let $\Mc_\varepsilon f$ be the partial Mellin integral
of $f$, taken over $(\RR_+^n)_\varepsilon$. 
  If $s\in \CC^n_\varepsilon$, then the function $a^s$ 
decays exponentially at the infinity of
$(\RR^n_+)_\varepsilon$. Therefore, if $f$ is a tempered distribution on $\RR_+^n$
(i.e., if $0\in \Temp(f)$), then
 $\Mc_\varepsilon f$
extends to a holomorphic function in $\CC^n_\varepsilon$. If, moreover, $0$ is an interior point
of $\Temp(f)$, then $\Mc_\varepsilon f$ is holomorphic in additive translates $\CC^n_\varepsilon+\sigma$
for $\sigma$ running in an open neighborhood of 0 in $\RR^n$. Therefore $\Mc f=\sum_\varepsilon
\Mc_\varepsilon f$ is holomorphic 
for $\RRe(s)$ running in some open neighborhood of 0, as claimed. 

To see that $\Mc f$ has at most polynomial growth on each $\sigma_0+i\,\RR^n$, it is again
enough to treat the case $\sigma_0=0$. The restriction of $\Mc f$ to $i\,\RR^n$ is 
a tempered distribution, the Fourier-Schwartz
transform of   $f\circ\exp$. As it is also a real analytic function,
it must be of polynomial growth.
\end{proof}

Next, we discuss the inverse Mellin transform, i.e., the analog of
the formula which finds each coefficient $c_\alpha$ in
\eqref{eq:laurent} as an integral of $F(t)$ times a monomial. 
Formally, the inverse Mellin integral is defined by
 \be\label{eq:inverse-mellin} 
 f(a) =  (\Nc_\Delta F)(a) \,\,=\,\,{1\over(2\pi i)^n}\int_{s\in \sigma_0+ i\RR^n}  F(s) a^{-s} ds. 
\quad \sigma_0\in\Delta,\,\,a\in\RR_+^n,
\ee
 In our case, this integral should again be understood using Schwartz's theory. More
precisely, we have:

\begin{prop}\label{prop:mellin-inversion}
  Let $\Delta\subset\RR^n$ be a convex open set and 
 $F(s)$ be an analytic function in $U_\Delta$ with at most polynomial growth
 on each vertical subspace. Choose $\sigma_0\in\Delta$ and define 
 $ f(a) =  (\Nc_\Delta F)(a)$  as  $a^{-\sigma_0}$ times the inverse Fourier transform of $g$ as a tempered distribution on 
  $\sigma_0+i\,\RR^n\simeq \RR^n$ (the Fourier transform being transplanted to $\RR_+^n$ via $\exp$).
   Then $\Nc_\Delta F$
   is  independent on $\sigma_0\in\Delta$, and is  a temperable distribution on $\RR_+^n$
 such that $\Delta\subset \Temp(f)$ and $\Mc f=F$. 
\end{prop}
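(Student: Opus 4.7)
\medskip

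\noindent\textbf{Proof proposal.} Via the group isomorphism $\exp$ of \eqref{eq:exponential}, everything transplants to standard Fourier analysis on $\RR^n$. Set $G(\zeta) = F(\zeta)$ viewed on the tube $U_\Delta$, and for each $\sigma_0 \in \Delta$ let $h_{\sigma_0}$ denote the inverse Fourier transform of the tempered distribution $\tau \mapsto G(\sigma_0 + i\tau)$ on $\RR^n$. Transplanting back, set $\tilde h_{\sigma_0}(a) = h_{\sigma_0}(\log a)$, a tempered distribution on $\RR_+^n$. The definition of $\Nc_\Delta F$ reads $f_{\sigma_0}(a) = a^{-\sigma_0}\,\tilde h_{\sigma_0}(a)$; so tautologically $a^{\sigma_0}\,f_{\sigma_0}$ is tempered, i.e.\ $\sigma_0 \in \Temp(f_{\sigma_0})$. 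The entire proposition will follow once we establish the $\sigma_0$-independence of $f_{\sigma_0}$ on $\Delta$: the inclusion $\Delta \subset \Temp(f)$ is then immediate, and $\Mc f = F$ on each vertical slice $\sigma_0 + i\RR^n$ is just Fourier inversion applied to $h_{\sigma_0}$, hence holds on all of $U_\Delta$ by analyticity (using Proposition \ref{prop:mellin-convergence}).

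\smallskip

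The core step is the independence. Pick $\sigma_0, \sigma_1 \in \Delta$ joined by a line segment in $\Delta$, and test $f_{\sigma_j}$ against $\varphi \in C_c^\infty(\RR_+^n)$. Transplanting, this amounts to pairing $e^{-\sigma_j \cdot x}\,h_{\sigma_j}(x)$ with $\phi \in C_c^\infty(\RR^n)$, which by Fourier duality rewrites, up to constants, as
\[
\langle e^{-\sigma_j \cdot x}\,h_{\sigma_j},\,\phi\rangle \,=\, \int_{\RR^n} G(\sigma_j + i\tau)\,\widehat{\phi}(-\tau + i\sigma_j)\,d\tau \,=\, \int_{\sigma_j + i\RR^n} G(s)\,\Phi(s)\,ds,
\]
where $\Phi(s) := \widehat{\phi}(is)$ is entire of Paley--Wiener type (since $\phi$ has compact support) and hence decays faster than any polynomial on each vertical subspace while growing at most exponentially in the real direction. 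Since $G$ is holomorphic in $U_\Delta$ with polynomial vertical growth, the integrand $G\cdot \Phi$ is holomorphic on the strip $\{\RRe(s) \in [\sigma_0,\sigma_1]\}$ and has Schwartz decay in the imaginary direction uniformly as $\RRe(s)$ varies in that compact segment. Applying Cauchy's theorem (iteratively coordinate-by-coordinate, with Fubini justified by the integrability just noted) deforms the contour $\sigma_0 + i\RR^n$ to $\sigma_1 + i\RR^n$, yielding $\langle f_{\sigma_0},\varphi\rangle = \langle f_{\sigma_1},\varphi\rangle$. Since $C_c^\infty(\RR_+^n)$ separates distributions on $\RR_+^n$, this gives $f_{\sigma_0} = f_{\sigma_1}$.

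\smallskip

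The main obstacle is the contour shift in several variables: one must ensure the polynomial growth hypothesis on vertical subspaces transfers to a locally uniform polynomial bound on tubes over compact subsegments of $\Delta$ (this is automatic by the three-lines-type convexity implicit in holomorphy plus continuity of the polynomial degree), and one must verify that the Paley--Wiener decay of $\Phi(s)$ overwhelms the polynomial growth of $G(s)$ uniformly enough to run Cauchy's theorem and Fubini for the multi-variable contour deformation. Once the integrability is in hand, the argument is routine one-variable complex analysis performed coordinate by coordinate.
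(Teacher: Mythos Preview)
Your argument is correct, and it takes a genuinely different route from the paper's. The paper does not pair against test functions or shift contours at all. Instead, it reduces to $0\in\Delta$ and compares the inverse transforms at $0$ and at a nearby $\sigma_0$ by expanding $F(s+\sigma_0)$ as a Taylor series in $\sigma_0$ with coefficients the derivatives of $F$ along $i\RR^n$; taking Fourier--Schwartz transforms term by term, this Taylor expansion becomes multiplication by the exponential $e^{-\sigma_0\cdot x}$, which exactly cancels the $a^{-\sigma_0}$ prefactor in the definition of $\Nc_\Delta F$. The remaining claims are then read off from Fourier--Schwartz inversion.

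Your contour-shift approach is the more classical one and has the advantage of making the analytic hypotheses do visible work: the Paley--Wiener decay of the transformed test function is precisely what dominates the assumed polynomial vertical growth of $F$ and legitimizes Cauchy's theorem. You also correctly flag the one nontrivial point, the local uniformity of the polynomial bound over compacta in $\Delta$; this is indeed a Phragm\'en--Lindel\"of/three-lines matter, and it is good that you name it rather than sweep it under the rug. The paper's Taylor-series argument is shorter but trades this for a tacit appeal to convergence of the series in the tempered-distribution topology, which is really the same holomorphy-plus-growth control in different clothing. Either way, the substance is the same; your version is more explicit about where the analysis happens.
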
 

We will call $\Nc_\Delta(F)$ the {\em coefficient function} of $F$ in $U_\Delta$.
Thus  the existence of the coefficient function presupposes that $F$ grows
at most polynomially  on each vertical subspace in $U_\Delta$. 
As usual with the Fourier transform, the coefficient function of the product of
analytic functions is the convolution (on the group $\RR_+^n$) of the coefficient
functions of the factors.

\begin{proof}
To show independence, it is enough to assume $0\in\Delta$ and compare the integrals
\eqref{eq:inverse-mellin} over $i\,\RR^n$ and $\sigma_0+i\,\RR^n$ for $\sigma_0$ being close to
$0$ in $\Delta$. Both functions $F(s)$ and $F(s+\sigma_0)$ are tempered distributions on
$i\,\RR^n\simeq \RR^n$ and so have Fourier-Schwartz  transforms. Moreover, 
  $F(s+\sigma_0)$  the sum of a Taylor series involving
derivatives of $F(s)$ (evaluated on $i\,\RR^n$).  So the Fourier transform of $F(s +\sigma_0)$
 is product of the Fourier transform of $F(s)$
 and an exponential factor. This factor is accounted
for by the change in $a^s$ in the integral \eqref{eq:inverse-mellin}, showing the independence.
The remaining claims follow from the inversion theorem for the Fourier-Schwartz transform.
\end{proof}

Let us note the particular case $\Delta=\RR^n$.

\begin{cor}\label{cor:abs-tempered}
The Mellin transforms $\Mc$ and $\Nc$ defines mutually inverse isomorphisms
 between the following two spaces:
\begin{itemize}
\item $\Dc(\RR_+^n)_{\on{abs}}$, the space of {\em absolutely tempered distributions},
i.e., of distributions $f(a)$ such that $f(a) a^s$ is tempered for each $s\in\CC^n$. 
\item $\Oc(\CC^n)_{\on{pol}}$, the space of entire functions in $\CC^n$ with
at most polynomial growth on each vertical subspace.
\end{itemize}
\qed
\end{cor}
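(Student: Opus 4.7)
The plan is to read off the corollary as the special case $\Delta = \RR^n$ of the two preceding propositions, being careful that the absolute temperedness condition matches exactly the hypothesis needed there.

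First I would observe the key translation: $f \in \Dc(\RR_+^n)_{\on{abs}}$ means that $f(a)\, a^\sigma$ is tempered for every $\sigma \in \RR^n$ (allowing $s$ to range over $\CC^n$ reduces to the real part), so $\Temp(f) = \RR^n$ and in particular $f$ is temperable with interior $\Delta = \RR^n$. Conversely, any temperable $f$ whose tempering set contains all of $\RR^n$ is absolutely tempered. Thus $\Dc(\RR_+^n)_{\on{abs}}$ is exactly the subclass of temperable distributions to which Proposition \ref{prop:mellin-convergence} applies with $\Delta = \RR^n$.

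For the forward map, I would apply Proposition \ref{prop:mellin-convergence} with $\Delta = \RR^n$: this gives that $\Mc f$ is analytic on $U_{\RR^n} = \CC^n$, i.e., entire, and has at most polynomial growth on every vertical subspace $\sigma_0 + i\RR^n$. Hence $\Mc$ sends $\Dc(\RR_+^n)_{\on{abs}}$ into $\Oc(\CC^n)_{\on{pol}}$. For the reverse map, I would apply Proposition \ref{prop:mellin-inversion} with $\Delta = \RR^n$: any $F \in \Oc(\CC^n)_{\on{pol}}$ satisfies the hypotheses, and the proposition produces a temperable distribution $\Nc_{\RR^n} F$ with $\Temp(\Nc_{\RR^n} F) \supset \RR^n$, i.e., absolutely tempered, and with $\Mc(\Nc_{\RR^n} F) = F$.

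Finally, to get that the two maps are mutually inverse, I would invoke the inversion clauses of Proposition \ref{prop:mellin-inversion}: starting from $f \in \Dc(\RR_+^n)_{\on{abs}}$ and forming $\Nc_{\RR^n}(\Mc f)$, the proposition identifies this with $f$ (taking $\sigma_0 = 0$, the definition is $a^0$ times the inverse Fourier--Schwartz transform of $(\Mc f)|_{i\RR^n} = \widehat{f \circ \exp}$, which is $f \circ \exp$ by Fourier inversion, hence $f$ on $\RR_+^n$). The composition in the other order is exactly the content of $\Mc \Nc_{\RR^n} = \Id$ already stated in Proposition \ref{prop:mellin-inversion}. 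No step should present a genuine obstacle — the only thing to verify carefully is that the independence of $\Nc_\Delta F$ on $\sigma_0$ (already established in Proposition \ref{prop:mellin-inversion}) applies globally when $\Delta = \RR^n$, which is immediate.
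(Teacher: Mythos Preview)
Your proposal is correct and takes exactly the approach the paper intends: the corollary is marked \qed\ with no proof, signaling that it is the immediate specialization $\Delta = \RR^n$ of Propositions \ref{prop:mellin-convergence} and \ref{prop:mellin-inversion}. Your unpacking of why absolute temperedness coincides with $\Temp(f)=\RR^n$, and of the two inversion identities, is precisely the content implicit in that \qed.
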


Note that an absolutely tempered distribution has actually exponential decay
at the infinity of $\RR_+^n$. 

\vskip .2cm

For future reference we recall two elementary properties of  the Mellin/Fourier transform.
We denote by $\delta_c\in\Dc(\RR_+)$ the delta function at $c\in\RR_+$. 

\begin{prop}\label{prop:mellin-delta-function}
(a) Let $F(s)$ be analytic in $U_\Delta$, with the coefficient function $f(a) = \Nc_\Delta(F)$.
Then for any $\nu=1, ..., n$ we have
\[
\Nc_\Delta(s_\nu F(s))\,\,=\,\, -a_\nu {d\over da_\nu} f(a). 
\]

(b) Let $\Delta$ be an interval $(c,c')\subset \RR$, so $U_\Delta$ is a strip in $\CC$. Let 
 $h(s)$ be analytic in $U_\Delta$, with coefficient function
   $k(a)$, $a\in\RR_+$. 
Consider the function of two variables 
\[
F(s_1, s_2) = h(s_1-s_2), \quad (s_1, s_2)\in U_{\widetilde \Delta}= \{ c< \RRe(s_1-s_2) < c'\}
\]
  Then the coefficient function  of $F$ is found by
  \[
  (\Nc_{\widetilde\Delta}F)(a_1, a_2) \,\,=\,\,\delta_1(a_1a_2)\cdot k(a_1).
  \]
  \qed
 
\end{prop}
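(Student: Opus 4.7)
The plan is to reduce both parts to standard properties of the Fourier--Schwartz transform under the exponential isomorphism \eqref{eq:exponential}, working throughout with temperable distributions.

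For part (a), I would differentiate the inverse Mellin integral \eqref{eq:inverse-mellin} under the integral sign. Since $a_\nu \tfrac{d}{da_\nu}(a^{-s}) = -s_\nu \, a^{-s}$, the computation gives formally
\[
-a_\nu \frac{d}{da_\nu} f(a) \,\,=\,\, \frac{1}{(2\pi i)^n} \int_{\sigma_0 + i\RR^n} s_\nu F(s) \, a^{-s} \, ds,
\]
which is exactly $\Nc_\Delta(s_\nu F(s))$. To justify this rigorously, I would transplant along $\exp$: the operator $a_\nu \tfrac{d}{da_\nu}$ becomes $\tfrac{d}{dx_\nu}$ on $\RR^n$, and multiplication by $s_\nu$ on the Mellin side becomes multiplication by $s_\nu$ on the Fourier side. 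The assertion then reduces to the standard intertwining $\widehat{\partial_\nu g} = -s_\nu \widehat g$ for tempered distributions. One needs only to note that $s_\nu F(s)$ retains polynomial growth on every vertical subspace, so that Proposition \ref{prop:mellin-inversion} applies and $\Nc_\Delta(s_\nu F)$ is a well-defined temperable distribution.

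For part (b), the strategy is a direct change of variables in the two-dimensional inverse Mellin integral. Picking $\sigma = (\sigma_1, \sigma_2)$ with $c < \sigma_1 - \sigma_2 < c'$, I would write
\[
(\Nc_{\widetilde \Delta} F)(a_1, a_2) \,\,=\,\, \frac{1}{(2\pi i)^2} \int_{\sigma + i\RR^2} h(s_1 - s_2) \, a_1^{-s_1} a_2^{-s_2} \, ds_1 \, ds_2
\]
and substitute $u = s_1 - s_2$, $w = s_2$ (Jacobian $1$). The integrand factors as $h(u) \, a_1^{-u} \cdot (a_1 a_2)^{-w}$, and the contour factors into $(\sigma_1-\sigma_2) + i\RR$ in $u$ and $\sigma_2 + i\RR$ in $w$. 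The $u$-integral yields $k(a_1)$ by hypothesis, while the $w$-integral is the one-dimensional inverse Mellin of the constant function $1$, which is the Dirac distribution $\delta_1$ at $1 \in \RR_+$ (as follows from Fourier inversion applied to the constant function on $i\RR$).

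The main obstacle is making the decoupling of the iterated integral rigorous, because one factor is a Dirac distribution rather than a function. I would handle this by pairing both sides against a test function $\phi(a_1, a_2) \in C_0^\infty(\RR_+^2)$ and performing the multiplicative change of variables $(a_1, a_2) \mapsto (a_1, b)$ with $b = a_1 a_2$, which turns $(a_1 a_2)^{-w}$ into $b^{-w}$ and separates the variables cleanly. Fubini's theorem for tempered distributions then legitimates the factorization, and the inverse Fourier--Schwartz transform identifies the $w$-integral with $\delta_1(b)$, giving the asserted equality $k(a_1) \cdot \delta_1(a_1 a_2)$ as distributions on $\RR_+^2$.
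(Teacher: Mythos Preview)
Your argument is correct. The paper does not actually give a proof of this proposition: it is stated as a recollection of ``two elementary properties of the Mellin/Fourier transform'' and is closed with a \qed\ immediately after the statement. Your proposal is exactly the kind of routine verification the authors had in mind --- differentiation under the integral for (a), and the linear change of variables $(s_1,s_2)\mapsto(s_1-s_2,\,s_2)$ for (b), each justified via the Fourier--Schwartz framework of Propositions \ref{prop:mellin-convergence}--\ref{prop:mellin-inversion}. There is nothing to compare: you have supplied the omitted details, and they are right.
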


\begin{ex} 
 Let $\zeta(s)$ be the Riemann zeta function, and
 \be\label{zeta}
 \zeta^*(s) \,\,=\,\,\pi^{-s/2}\Gamma(s/2)\zeta(s)
 \ee
 be the zeta function of $\SZ$. It is a meromorphic function on $\CC$ with simple
 poles at 0 and 1, satisfying $\zeta^*(s)=\zeta^*(1-s)$. 

\vskip .2cm

 The function $\Gamma(s)$ has exponential decay on each vertical line $\sigma_0+i\RR$,
as follows from the Stirling formula. The function $\zeta(s)$ has at most polynomial growth on
each vertical line, see \cite{edwards}, Ch.9. Therefore $\zeta^*(s)$ has
exponential decay on each vertical line and therefore has a well defined coefficient
function in each of the three strips of holomorphy: $\RRe(s)>1$, $0<\RRe(s)<1$ and $\RRe(s)<0$.
The coefficient function in $\RRe(s)>1$ is given by the classical formula of Riemann,
see \cite{edwards}, \S 1.7:
\be\label{eq:zeta-theta}
(\Nc_{\RRe(s)>1}\zeta^*)(a) \,\,=\,\,\theta(a^2)-1, \quad \theta(b) \,\, := \,\,
\sum_{n=-\infty}^\infty e^{-n^2\pi  b}.
\ee
It can be obtained by forming the convolution (on  $\RR_+$) 
of the distribution $\sum_{n=1}^\infty \delta_{1/n}$, of 
the function
$2e^{-a^2}$ and of the distribution $\delta_{1/\sqrt{\pi}}$. These three distributions 
 are the coefficient
functions for  $\zeta(s)=\sum 1/n^s$, for $\Gamma(s/2) = 2\int_0^\infty e^{-a^2} a^s d^*a$
and for $\pi^{-s/2}$ respectively.
 The coefficient functions in the two other strips are obtained by moving the contour past the
poles of $\zeta^*(s)$ at $s=1$ and $s=0$ with residues $\pm 1/\sqrt{\pi}$:
\[
\begin{gathered}
(\Nc_{0<\RRe(s)<1} \zeta^*)(a) = \theta(a^2)-1-{1\over a\sqrt{\pi}}, \cr
(\Nc_{\RRe(s)<0} \zeta^*)(a)=\theta(a^2)+1/\sqrt{\pi}-1-{1\over a\sqrt{\pi}}. 
\end{gathered}
\]

\end{ex}

\vfill\eject
 
\section{The zeta function shuffle algebra.}

We recall the formalism of shuffle algebras of Feigin-Odesskii \cite{feigin-odesskii:shuffles},
see \cite{schiffmann-vasserot:higher-genus}
 \cite{KSV:curves} for a more systematic discussion
in the rational function case. We denote by $\Sen_n$ the symmetric group
of permutations of $\{1, ..., n\}$. 

\vskip .2cm

Let $\varphi(s)$ be a meromorphic function on $\CC$. 
For any $m,n>0$ let 
$Sh(m,n)$ be  the set of $(m,n)$-{\em shuffles}, i.e., permutations $w\in \Sen_{m+n}$ such that $w(i)<w(j)$
whenever $i<j$ and either both $i,j\in [1,m]$ or both $i,j\in[m+1,m+n]$.
For any $w\in Sh(m,n)$ consider the following  meromorphic function on $\CC^{m+n}$:
\be\label{eq:phi-sigma}
\varphi_w(s_1, ..., s_{m+n}) \,\,=\,\, \prod_{{i\in[1,m]\atop j\in [m+1,m+n],}
\atop w(i)>w(j)} \varphi(s_i-s_j).
\ee

Let  $\Oc(\CC^n)\subset \Mer(\CC^n)$  be the spaces  of all entire and meromorphic
 functions on $\CC^n$ (defined to be
equal to $\CC$ for $n=0$). 
On  the direct sum $\bigoplus_n \Mer(\CC^n)$
we introduce
  the {\em shuffle multiplication}   
    \be\label{eq:shuffle-product-1}
 \sm_{m,n}: \Mer(\CC^m)\otimes \Mer(\CC^n) \lra \Mer(\CC^{m+n}), 
 \quad 
 F\otimes F'\mapsto F\sm F',  
  \ee
 by the formula
 \be\label{eq:shuffle-product-2}
 \begin{gathered}
  (F\sm F')(s_1, ..., s_{m+n}) \,\,=
\cr = \sum_{w\in Sh(m,n)}w\biggl(
  F(s_{(1)}, ..., s_{(m)})
F'(s_{m+1}, ..., s_{m+n})\biggr)   \cdot\varphi_w(s_1, ..., s_{m+n}).
 \end{gathered}
 \ee
 The following is then straightforward, as in 
 \cite{feigin-odesskii:shuffles}.
 
  \begin{prop} 
  The shuffle multiplication $\sm$ makes $\bigoplus_n \Mer(\CC^n)$ into
 a graded  associative algebra, with unit $1\in\Mer(\CC^0)$. \qed
  \end{prop}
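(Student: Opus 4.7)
The plan is to verify the two axioms directly: the unit axiom, which is essentially immediate, and associativity, which reduces to a combinatorial identity about ``three-block shuffles.''

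First I would check that $1\in\Mer(\CC^0)$ is a two-sided unit. For $F\in\Mer(\CC^n)$, the set $Sh(0,n)$ (resp.\ $Sh(n,0)$) consists of a single element, the identity permutation, which has no cross-block inversions, so $\varphi_w\equiv 1$ and the formula \eqref{eq:shuffle-product-2} collapses to $F\sm 1 = 1\sm F = F$.

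For associativity, I would take $F\in\Mer(\CC^m)$, $F'\in\Mer(\CC^n)$, $F''\in\Mer(\CC^p)$ and expand both $(F\sm F')\sm F''$ and $F\sm(F'\sm F'')$ as sums over a common index set. Call $Sh(m,n,p)\subset\Sen_{m+n+p}$ the set of permutations which preserve the internal orders of the three blocks $[1,m]$, $[m+1,m+n]$, $[m+n+1,m+n+p]$. The standard bijections
\[
Sh(m+n,p)\times Sh(m,n)\;\simeq\;Sh(m,n,p)\;\simeq\; Sh(m,n+p)\times Sh(n,p),
\]
obtained by composing an outer shuffle with an inner one in the left or the right pair, let me re-index both iterated sums by a single $w\in Sh(m,n,p)$. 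After re-indexing, the permuted arguments of $F$, $F'$, $F''$ on both sides coincide, so the only thing to verify is that the scalar coefficients agree.

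For this, I would observe that for any $w\in Sh(m,n,p)$ the cross-block inversions of $w$ fall into three disjoint types: between blocks $1$ and $2$, between blocks $1$ and $3$, and between blocks $2$ and $3$. The definition \eqref{eq:phi-sigma} attaches one factor $\varphi(s_i-s_j)$ to each such inversion (on all three sides). When one computes $(F\sm F')\sm F''$ via the bijection above, the outer shuffle contributes exactly the factors from block-$1$/block-$3$ and block-$2$/block-$3$ inversions of $w$, while the inner shuffle contributes the block-$1$/block-$2$ inversions; for $F\sm(F'\sm F'')$ the roles of ``inner'' and ``outer'' are swapped, but the resulting total product is the same three-block product
\[
\prod_{\substack{i<j,\;w(i)>w(j)\\ i,j\text{ in different blocks}}}\varphi(s_i-s_j).
\]
Hence both triple products equal $\sum_{w\in Sh(m,n,p)} w(F\otimes F'\otimes F'')\cdot \varphi_w$, proving associativity.

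The main obstacle is bookkeeping: one has to be careful that when, say, $(F\sm F')$ is written as a function of its first $m+n$ arguments and then fed into the outer shuffle with $F''$, the factor $\varphi_\sigma$ originally produced by the inner shuffle has its variables correctly renamed by the outer permutation, so that the resulting $\varphi$-factors are indexed by the genuine inversions of the composite permutation $w$ and not by those of the inner one in the original variables. Once the variable relabelling is written out, the identification of factors is straightforward and the claim follows as in \cite{feigin-odesskii:shuffles}.
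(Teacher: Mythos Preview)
Your proposal is correct and is precisely the standard argument the paper has in mind: the paper gives no details beyond ``straightforward, as in \cite{feigin-odesskii:shuffles}'', and your three-block shuffle bijection together with the inversion-counting for the $\varphi$-factors is exactly that straightforward verification. The bookkeeping caveat you flag about the inner $\varphi_\sigma$ having its variables relabelled by the outer shuffle is the only subtle point, and you have identified it correctly.
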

  
  \vskip .2cm
 
 Assume further that the function $\varphi$ satisfies the equation 
    $\varphi(-s)\varphi(s)=1$, and, moreover, is represented in the form
    \be\label{eq:antisymmetrization}
 \varphi(s) = \lambda(s)^{-1}\lambda(-s)   
      \ee
for some meromorphic function $\lambda(s)$. For $n\geqslant 0$ let $\Mer(\CC^n)^{\Sen_n}$
be the space of symmetric meromorphic functions on $\CC^n$.  On the
direct sum $\bigoplus_n \Mer(\CC^n)^{\Sen_n}$, 
we introduce the {\em symmetric shuffle multiplication}
\be\label{eq:symmetric-shuffle1}  
\begin{gathered}
 \star_{m,n}: \Mer(\CC^m)^{\Sen_m}\otimes \Mer(\CC^n)^{\Sen_n} \lra \Mer(\CC^{m+n})^{\Sen_{m+n}},  \cr
F\otimes F'\mapsto F\star F', 
\end{gathered}
\ee
 by the formula
\be\label{eq:symmetric-shuffle2}
\begin{gathered}
(F\star F') (s_1, ..., s_{m+n}) \,\,=  \cr
\,\,\sum_{w\in Sh(m,n)} w\biggl(
F(s_1, ..., s_m) F'(s_{m+1}, ..., s_{m+n})
\prod_{1\leqslant i\leqslant m\atop m+1\leqslant j\leqslant m+n} \lambda(s_i-s_j)\biggr).
\end{gathered}
 \ee
 
 \begin{prop}\label{prop:symmetric-shuffle}
(a) The multiplication $\star$ makes $\bigoplus_n \Mer(\CC^n)^{\Sen_n}$ into a
graded associative algebra
with unit.

(b) The correspondence
$$F(s_1, ..., s_n) \,\,\longmapsto F(s_1, ..., s_n)\prod_{i<j} \lambda(s_i-s_j)$$
defines an injective algebra homomorphism 
$$\biggl(\bigoplus_n \Mer(\CC^n)^{\Sen_n}, \star\biggr)\,\,
\hookrightarrow\,\, \biggl(\bigoplus_n \Mer(\CC^n), \sm \biggr).$$

(c) Assume that $\lambda(s)$ has no poles except, possibly, a first order pole at $s=0$.
Then the graded subspace $\bigoplus_n \Oc(\CC^n)^{\Sen_n}$ is a subalgebra with
respect to $\star$. 
 
\end{prop}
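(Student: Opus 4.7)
The plan is to prove the three parts in the order (b), (a), (c), since (a) follows formally from (b). For (b), set $\Lambda_n(s_1,\ldots,s_n) = \prod_{1\le i<j\le n}\lambda(s_i-s_j)$ and $\iota_n(F) = F \cdot \Lambda_n$. The core task will be to verify the identity
\[
\iota_{m+n}(F \star F') \;=\; \iota_m(F) \sm \iota_n(F'),
\]
shuffle-by-shuffle. For each $w \in Sh(m,n)$, let $I = w([1,m])$ and let $J$ be its complement in $[1,m+n]$; the $\Sen_m\times\Sen_n$-symmetry of $F\cdot F'$ collapses the $w$-summand on each side to $F(s_I)F'(s_J)$ times a rational combination of $\lambda$- and $\varphi$-factors. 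The strategy is to split $\Lambda_{m+n} = \Lambda^I \cdot \Lambda^J \cdot M_w$ into "internal" Vandermondes $\Lambda^I = \prod_{a,a'\in I,a<a'}\lambda(s_a-s_{a'})$, $\Lambda^J$, and a "mixed" factor $M_w$ running over pairs straddling $I$ and $J$. The internal parts match the factors $w(\Lambda_m)$ and $w(\Lambda_n)$ coming from $\iota_m(F)$ and $\iota_n(F')$ on the $\sm$-side, while the mixed part will be reconciled with the $\varphi_w$-factor using the defining relation $\varphi(t)\lambda(t) = \lambda(-t)$, applied once for each pair $(a,b) \in I\times J$. Injectivity of $\iota$ is immediate, since $\Lambda_n$ is a nonzero meromorphic function.

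Part (a) then follows formally from (b). Associativity of $\star$ transfers from associativity of $\sm$ through the injective homomorphism $\iota$: for symmetric $F_1,F_2,F_3$,
\[
\iota\bigl((F_1\star F_2)\star F_3\bigr) \;=\; \iota(F_1)\sm\iota(F_2)\sm\iota(F_3) \;=\; \iota\bigl(F_1\star(F_2\star F_3)\bigr),
\]
and injectivity forces $(F_1 \star F_2)\star F_3 = F_1 \star (F_2 \star F_3)$. The unit is $1 \in \Mer(\CC^0)$: both $Sh(n,0)$ and $Sh(0,n)$ are singletons consisting of the identity, the mixed $\lambda$-product is empty, and one reads off directly that $F \star 1 = 1 \star F = F$.

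For part (c), assume now that $\lambda$ has no poles other than possibly a simple pole at $s = 0$. Given $F,F'$ entire and symmetric, the only candidate poles of $F \star F'$ lie along the diagonals $\{s_p = s_q\}$, arising from the $\lambda$-factors in individual shuffle summands. Fix $p < q$: only shuffles whose subset $I$ contains exactly one of $\{p,q\}$ contribute a pole at $s_p = s_q$. The plan is to define an involution on these contributing shuffles by transposing $p$ and $q$ between $I$ and $J$, so each $w$ with $(p \in I,q \in J)$ is paired with $w'$ having $(q \in I, p \in J)$. Along the diagonal $s_p = s_q$, the symmetry of $F$ and $F'$ makes $F(s_I)F'(s_J)$ and $F(s_{I'})F'(s_{J'})$ coincide, while the non-singular mixed $\lambda$-factors in the two summands also agree term-by-term. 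The singular contributions combine as $\lambda(s_p - s_q) + \lambda(s_q - s_p)$, which is regular at $s_p = s_q$ because the simple poles of $\lambda(t)$ and $\lambda(-t)$ at $t = 0$ carry opposite residues and cancel. Each pair of shuffles therefore sums to a contribution regular along $s_p = s_q$, and summing over all pairs $(p,q)$ shows that $F \star F'$ has no diagonal singularities; since $\lambda$ has no poles elsewhere, $F \star F'$ is entire. The main obstacle is the bookkeeping in (b)---keeping track of how $w$ acts on the various $\lambda$-Vandermondes and matching signs of differences; but once the decomposition $\Lambda_{m+n} = \Lambda^I\Lambda^J M_w$ is in place, the identity is a mechanical consequence of $\varphi(t)\lambda(t) = \lambda(-t)$.
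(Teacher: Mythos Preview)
Your overall architecture --- prove (b), deduce (a) by transfer through an injection, then handle (c) by a pole-cancelling involution on shuffles --- is exactly the right one, and your argument for (c) is the natural generalization of what the paper actually writes out: there the authors only treat $m=n=1$, writing $\lambda(s)=cs^{-1}+h(s)$ and observing that
\[
(f\star g)(s_1,s_2)=\frac{c}{s_1-s_2}\bigl[f(s_1)g(s_2)-f(s_2)g(s_1)\bigr]+(\text{entire}),
\]
where the bracket vanishes on the diagonal by antisymmetry. Your pairing of shuffles by swapping $p$ and $q$ between $I$ and $J$ is precisely this mechanism in arbitrary degree, and the residue cancellation $\lambda(t)+\lambda(-t)\in\Oc(\CC)$ is what makes each pair regular along $\{s_p=s_q\}$.

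There is, however, a genuine gap in your treatment of (b). You should test your shuffle-by-shuffle identity in the smallest case $m=n=1$. With $\iota_2(G)=G\cdot\lambda(s_1-s_2)$ one computes directly
\[
\iota_2(F\star F')=\lambda(s_1-s_2)^2\,F(s_1)F'(s_2)+\lambda(s_1-s_2)\lambda(s_2-s_1)\,F(s_2)F'(s_1),
\]
while
\[
\iota_1(F)\sm\iota_1(F')=F(s_1)F'(s_2)+\varphi(s_1-s_2)\,F(s_2)F'(s_1),
\]
and these differ by an overall factor $\lambda(s_1-s_2)^2$ (take $\lambda(s)=1/s$ to see it fail concretely). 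The source of the discrepancy is that on the $\iota(F\star F')$ side each mixed pair $(a,b)\in I\times J$ contributes \emph{two} $\lambda$-factors (one from $M_w\subset\Lambda_{m+n}$ and one from the $\star$-kernel), whereas $\varphi_w$ contributes at most one $\varphi$-factor per pair; the relation $\varphi(t)\lambda(t)=\lambda(-t)$ cannot absorb the surplus. The paper itself gives no details here, only a pointer to Feigin--Odesskii, and its displayed formula for the map in (b) appears to be misprinted: the intertwiner that actually works is $F\mapsto F\big/\prod_{i<j}\lambda(s_i-s_j)$, i.e.\ division rather than multiplication by $\Lambda_n$. Even then, matching terms requires care because $\varphi_w$ in \eqref{eq:phi-sigma} is expressed in the unpermuted variables $s_i,s_j$, not in your ``final'' variables $s_a,s_b$; the verification is not quite as mechanical as you suggest, and you should carry it out explicitly in degrees $(1,1)$ and $(1,2)$ before asserting the general case. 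Your deduction of (a) from (b) is fine once (b) is repaired.
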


\begin{proof} Parts (a) and (b) are proved straightforwardly, as
in  \cite{feigin-odesskii:shuffles}. For (c), let us indicate why
\[
\star_{1,1}: \Oc(\CC)\times\Oc(\CC)\lra\Mc er(\CC^2)
\]
takes values in $\Oc(\CC^2)$ (the general case is similar). 
Writing $\lambda(s) = cs^{-1} + h(s)$ with $h$ entire, we have, for $f,g\in\Oc(\CC)$:
\[
\begin{split}
(f\star g)(s_1, s_2) \,\,= \,\,
\lambda(s_1-s_2) f(s_1)g(s_2) + \lambda(s_2-s_1) f(s_2) g(s_1) \cr
=\,\, \frac{c}{s_1-s_2}\bigl[ f(s_1)g(s_2)-f(s_2)g(s_1)\bigr] \,\,+\,\,\text{(entire)},
\end{split}
\]
and the expression in square brackets, being an entire antisymmetric function,
vanishes on the diagonal $s_1=s_2$. 

\end{proof}

\begin{df} (a) We call the  {\em shuffle algebra} associated to $\varphi$  the subalgebra
 $\Shuff(\varphi)\subset \bigoplus_{n\geqslant 0} \Mer(\CC^n)$   generated by
the space $\Oc(\CC)\subset\Mer(\CC^1)$.  
We call the {\em  symmetric shuffle algebra} associated to $\lambda$ the subalgebra
$\SShuff(\lambda) \subset \bigoplus_{n\geqslant 0} \Mer(\CC^n)^{\Sen_n}$ 
generated by $\Oc(\CC)$.

(b) The {\em Paley-Wiener shuffle algebra} $\Shuff(\varphi)_{\PW}$, resp. the 
{\em Paley-Wiener
symmetric shuffle algebra} $\SShuff(\lambda)_{\PW}$, is defined as the subalgebra
in $\Shuff(\varphi)$, resp. $\SShuff(\lambda)$, generated by the subspace
$\PW(\CC)\subset\Oc(\CC)$. 
\end{df}

Thus, if $\varphi$ and $\lambda$ are related by \eqref{eq:antisymmetrization},
then $\Shuff(\varphi)$ is isomorphic to $\SShuff(\lambda)$
and $\Shuff(\varphi)_{\PW}$  to $\SShuff(\lambda)_{\PW}$
 If, further, $\lambda$
satisfies the condition (c) of Proposition \ref{prop:symmetric-shuffle},
then $\SShuff(\lambda)$ is a subalgebra of $\bigoplus \Oc(\CC^n)^{\Sen_n}$. 

\vskip .3cm
 
 We now specialize $\varphi(s)$ to be   the following meromorphic function:
 \be\label{eq:Phi}
 \Phi(s) \,\,=\,\,\zeta^*(s)/\zeta^*(s+1).
 \ee
 It is known as the {\em global Harish-Chandra function}
 (or the {\em scattering matrix}) for $\SZ$, cf. \cite[\S 7]{lax-phillips}.
  The functional equation for $\zeta(s)$ implies that
$\Phi(-s)\Phi(s) =1.$ We also consider the function
\be
\Lambda(s) = \zeta^*(-s)(s-1)(-s-1).
\ee
It has just one simple pole at $s=0$, with $\res_{s=0}\Lambda(s)=1$,
 and zeroes at nontrivial zeroes of $\zeta(s)$
as well as at $s=-1$. We also have the identity
\be
\Phi(s) =  \Lambda(s)^{-1}\Lambda(-s).
\ee

Here is the  main result of this paper, which will be proved in Section \ref{sec:CT}.

\begin{thm}\label{thm:main}
 The Mellin transform $\Mc: SH_1=C^\infty_0(\RR_+)\buildrel\sim\over
\to \PW(\CC)$ extends to an isomorphism of algebras 
$SH\to \Shuff(\Phi)_{\PW}\simeq \SShuff(\Lambda)_{\PW}$.
 \end{thm}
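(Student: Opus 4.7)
The plan is to construct an intrinsic map $SH\to\Shuff(\Phi)_\PW$ via ``constant term followed by Mellin transform,'' and then to compute it explicitly on the generating monomials $f_1 * \cdots * f_n$ with $f_i\in H_1$. Iterating the definition \eqref{eq:hall-product} shows that $(f_1 * \cdots * f_n)(E)$ is a sum over complete flags of primitive sublattices $0=L_0\subset\cdots\subset L_n=L$ in $E$ of $\prod_i f_i(\deg(L_i/L_{i-1}))$ weighted by the degree twist. By Proposition~\ref{prop:bun-matrices}, this is exactly the pseudo-Eisenstein series on $GL_n(\QQ)\backslash GL_n(\AAA)/K_n$ induced from the character $\prod_i f_i(a_i)$ of the diagonal $\RR_+^n$ along the standard Borel $B$. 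This reformulation opens the door to classical automorphic techniques.

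\paragraph{Key computation: Gindikin--Karpelevich.}
The heart of the proof is the evaluation of the constant term along $B$ of this pseudo-Eisenstein series. By the Bruhat decomposition $GL_n(\QQ)=\bigsqcup_{w\in\Sen_n} B(\QQ)\,w\,B(\QQ)$, this constant term splits as a sum of standard intertwining integrals indexed by $w\in\Sen_n$. For each inversion $i<j$ with $w^{-1}(i)>w^{-1}(j)$, Tate's local computation at every place of $\QQ$, combined with the functional equation for $\zeta^*$, yields a scalar factor
$$\frac{\zeta^*(s_i-s_j)}{\zeta^*(s_i-s_j+1)}\,=\,\Phi(s_i-s_j).$$
After taking Mellin transforms and reassembling, this produces precisely the shuffle-product formula \eqref{eq:phi-sigma}--\eqref{eq:shuffle-product-2} with $\varphi=\Phi$; the symmetric form $\SShuff(\Lambda)_\PW$ is then obtained via the isomorphism of Proposition~\ref{prop:symmetric-shuffle}(b) applied to $\lambda=\Lambda$.

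\paragraph{Well-definedness, algebra map, surjectivity.}
The identity of the previous paragraph delivers three things at once: (i) the assignment $f_1*\cdots*f_n \mapsto \Mc f_1 \sm\cdots\sm \Mc f_n$ is well-defined on $SH$, since any linear relation among Hall products of generators automatically descends (both sides are extracted from the same automorphic form); (ii) the resulting map is a graded algebra homomorphism, because iterating the constant-term computation is the automorphic counterpart of shuffle associativity; and (iii) the image lies in $\Shuff(\Phi)_\PW$, since each $\Mc f_i\in\PW(\CC)$ by the Paley--Wiener theorem. Surjectivity onto $\Shuff(\Phi)_\PW$ is then immediate from the generating definition of that algebra.

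\paragraph{Main obstacle: injectivity and analytic control.}
The hardest point is injectivity. The strategy is to recover the symmetrized tensor $\sum_{w\in\Sen_n}w(f_1\otimes\cdots\otimes f_n)$ from the shuffle output $\Mc f_1\sm\cdots\sm\Mc f_n$, and then to use that the $\Sen_n$-symmetrization of a tensor of Paley--Wiener functions determines the original tensor modulo $\Sen_n$, which is exactly the indeterminacy built into the shuffle-algebra grading. Concretely, Proposition~\ref{prop:mellin-convergence} gives analytic control of the shuffle output on shifted vertical cones, while Proposition~\ref{prop:symmetric-shuffle}(c), applied to $\lambda=\Lambda$ (whose only pole is a simple one at $s=0$), ensures that the nominal singularities of the shuffle factors do not obstruct this recovery. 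Two analytic ingredients must be handled with care along the way: (a) the regularization of the individual intertwining integrals and their analytic continuation to a global meromorphic function, which is standard Langlands theory but needs to be made compatible with distributional Mellin transforms as in Proposition~\ref{prop:mellin-inversion}; and (b) the propagation of Paley--Wiener estimates through the shuffle product, which relies on the exponential decay of $\zeta^*(s)$ on vertical strips that follows from Stirling's formula.
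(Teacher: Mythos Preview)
Your overall architecture---interpret Hall products as pseudo-Eisenstein series, compute the constant term via Bruhat decomposition and Gindikin--Karpelevich, then take Mellin transforms---matches the paper's Section~\ref{sec:CT} closely, and your treatment of well-definedness, the algebra-homomorphism property, and surjectivity is essentially correct.

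The genuine gap is in your injectivity argument. You propose to ``recover the symmetrized tensor $\sum_w w(f_1\otimes\cdots\otimes f_n)$ from the shuffle output'' and then argue that this symmetrization determines the original tensor ``modulo $\Sen_n$, which is exactly the indeterminacy built into the shuffle-algebra grading.'' This does not work, for two reasons. First, there is no evident way to extract the naive symmetrization of $\varphi=f_1\otimes\cdots\otimes f_n$ from $\sum_w (w\,\Mc\varphi)\cdot\Phi_w$: the weights $\Phi_w$ differ with $w$, and the vanishing of this weighted sum says nothing direct about the unweighted one. Second, and more seriously, the Hall product $*_{1^n}(\varphi)$ does \emph{not} depend only on the symmetrization of $\varphi$ (the flag summation is ordered; already $f_1*f_2\neq f_2*f_1$), so even perfect knowledge of the symmetrization would not decide whether $*_{1^n}(\varphi)=0$. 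What must actually be shown is: if $Ch_n(f)=0$ for $f\in SH_n$, then $f=0$ as an automorphic form on $\Bunn_n$. Your paragraph never addresses this.

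The paper's device is short and avoids inverting anything. One checks (by the standard unfolding computation for pseudo-Eisenstein series, using the Iwasawa Jacobian) that the twisted constant term $\widetilde\CT_n$ is the \emph{adjoint} of $*_{1^n}:H_1^{\otimes n}\to H_n$ for the natural positive-definite $L^2$ pairings on source and target. Then for $0\neq f=*_{1^n}(\varphi)\in SH_n$,
\[
\bigl(\varphi,\ \widetilde\CT_n(f)\bigr)\;=\;\bigl(*_{1^n}(\varphi),\ f\bigr)_H\;=\;(f,f)_H\;>\;0,
\]
so $\widetilde\CT_n(f)\neq 0$, hence $Ch_n(f)=\Mc(\widetilde\CT_n(f))\neq 0$. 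This positivity-via-adjointness step is the missing idea; once it is in place, the analytic caveats you list under (a) and (b) are routine.
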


 The bigger algebra $\SShuff(\Lambda)\simeq\Shuff(\Phi)$ can be thus seen as a natural
completion of $SH$.

\vfill\eject

\section{The constant term and its Mellin transform.}\label{sec:CT}

The sum over shuffles appearing in the definition of the shuffle algebra
turns out to match quite exactly the sum over shuffles appearing
in the classical formula for the constant term of a (pseudo-)Eisenstein series,
cf.  \cite{moeglin-waldspurger-book}, II.1.7.
In this section we perform a detailed comparison and obtain a proof
of  Theorem \ref{thm:main}.  Our comparison
 can be  organized into 5 steps: 

\begin{enumerate}[(A)]

\item  Taking the  constant term of an automorphic form on $GL_n$ with respect to the Borel subgroup 
$B_n$,
defines a map
$\CT_n: H_n\to C^\infty(\RR_+^n)$.  

\item 
We denote by 
 $\widetilde\CT_n$ the twist of 
  $\CT_n$ by the analog of the Euler
form (Iwasawa Jacobian) to match the formula \eqref{eq:hall-product} for the Hall product. 
 It is then adjoint to the Hall multiplication map
$$*_{1^n} = *_{1,...,1}: H_1^{\otimes n}\lra H_n$$
with respect to  natural positive definite Hermitian scalar products on both sides. 
This adjointness implies that the restriction of $\widetilde\CT_n$ to $SH_n=\Im(*_{1^n})$
is an embedding $SH_n\to C^\infty(\RR_+^n)$.

\item The standard principal series intertwiners for $GL_n$ give rise to integral operators
$$M_w: C^\infty_0(\RR_+^n) \lra C^\infty(\RR_+^n), \quad w\in\Sen_n,$$
whose domain of definition can be extended to
include more general functions. The  formula for the constant term of a
pseudo-Eisenstein
series then says:
\be\label{eq:CT-Eis}
\widetilde\CT_{n'+n''}(f' *f'') =\sum_{w\in Sh(n', n'')} M_w(\widetilde\CT_{n'}(f')\otimes
\widetilde\CT_{n''}(f'')), \quad f'\in H_{n'}, f''\in H_{n''}.
\ee
 
\item For $f\in SH_n$ we define $Ch_n(f)$ to be the Mellin transform of $\widetilde \CT_n(f)$. 
It is  verified to represent a meromorphic function on $\CC^n$. Taken together, the maps
$Ch_n$ define then an embedding of vector spaces $Ch: SH\to\bigoplus_n \Mer(\CC^n)$. 

\item Finally, one sees that the Mellin transform takes $M_w$ to the operator on
$\Mer(\CC^n)$ taking  a function $F(s_1, ..., s_n)$ to
$$(w F)(s_1, ..., s_n) \cdot \prod_{i<j\atop w(i)>w(j)} \Phi(s_i-s_j),$$
and so $Ch$ takes the Hall product into the shuffle product, by comparing 
\eqref{eq:CT-Eis} with \eqref{eq:shuffle-product-2}.

\end{enumerate}

\noindent  We now  implement each step in detail. 

\paragraph{A. The constant term.} 
We will use both the real and the adelic interpretation of the component $H_n$ of $H$: 
$$
H_n \,\,=\,\,C^\infty_0 \bigl(GL_n(\ZZ)\backslash GL_n(\RR)/O_n\bigr)\,\,
=\,\,C^\infty_0\bigl(GL_n(\QQ)\backslash GL_n(\Aen)/GL_n(\widehat\Oc)\bigr).
$$
 Let $B=B_n$  be the  lower triangular
Borel subgroup in $GL_n$ and 
  $U$ be the unipotent radical of $B$. For 
$f\in H_n$ its {\em constant term} is the function $\CT(f)$ on $\RR_+^n$ defined
in either interpretation by:
\be\label{eq:CT-untwisted}
\begin{gathered}
\CT_n(f)(a_1, ..., a_n) \,\,=\,\,\int_{u\in U(\ZZ)\backslash U(\RR)} f\bigl (u\cdot\diag(a_1, ..., a_n)\bigr) du\,\, =  \cr
=\,\, \int_{u_\Aen\in U(\QQ)\backslash U(\Aen)} f\bigl (u_\Aen\cdot\diag(a_1, ..., a_n)\bigr) du_\Aen,
\quad a_i\in\RR_+.
\end{gathered}
\ee
Here $du$, resp. $du_\Aen$, is the Hall measure on $U(\RR)$, resp. $U(\Aen)$,
 normalized so that $U(\ZZ)\backslash U(\RR)$, resp. $U(\QQ)\backslash U(\Aen)$, has volume 1.
Clearly, $\CT_n(f)$ is a $C^\infty$-function on $\RR_+^n$, bounded by $\max |f(g)|$.  

\begin{prop}\label{prop:CT-support}
 For every $f\in H_n$   there is $c\in\RR_+$
such that $\Supp(\CT_n(f))$ is contained in the domain
$$a_1\leqslant c, \,\, a_1a_2\leqslant c, \,\,\cdots, \,\, a_1... a_{n-1}\leqslant c,
\,\,\, {1\over c}\leqslant a_1\cdots a_n\leqslant c.$$
\end{prop}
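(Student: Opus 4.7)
The plan is to compute the bundle $E_{u\cdot \diag(a)}$ explicitly via Proposition \ref{prop:bun-matrices}, identify a canonical flag of subbundles whose degrees are exactly the partial products $a_1\cdots a_k$, and then read off the support conditions from the compactness of $\Supp(f)$.

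For $u\in U(\RR)$ lower triangular unipotent and $a=(a_1,\ldots,a_n)\in\RR_+^n$, put $g=u\cdot\diag(a)\in GL_n(\RR)$. By Proposition \ref{prop:bun-matrices}, the corresponding bundle $E_g$ has lattice $L_g=\ZZ^n\subset V_g=\RR^n$ with form $q_g(x)=\|Tx\|^2_{q_\st}$, where $T=(u^t)^{-1}\diag(a^{-1})$. Since $(u^t)^{-1}$ is upper triangular unipotent, $T$ is upper triangular with diagonal $(a_1^{-1},\ldots,a_n^{-1})$. For each $k$, the primitive sublattice $L_k=\ZZ e_1\oplus\cdots\oplus\ZZ e_k$ gives, by Proposition \ref{prop:finiteness-subbundles} (the proposition describing subbundles), a rank-$k$ subbundle $E_k\subset E_g$. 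The first $k$ columns of $T$ vanish in rows $k+1,\ldots,n$ (since $T$ is upper triangular), so the matrix $T_k$ of those columns has its only nonzero $k\times k$ minor in the top rows, which is upper triangular with determinant $(a_1\cdots a_k)^{-1}$. By Cauchy--Binet,
\[
\Vol(V_k/L_k)^2 \,=\, \det(T_k^t T_k) \,=\, (a_1\cdots a_k)^{-2},
\]
so $\deg(E_k)=a_1\cdots a_k$ exactly. In particular $\deg(E_g)=a_1\cdots a_n$.

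The final step uses compactness of $K:=\Supp(f)\subset\Bunn_n$. Since $\deg\colon\Bunn_n\to\RR_+$ is continuous, there is $c>0$ with $\deg(E)\in[1/c,c]$ for all $E\in K$. I also claim that $\deg(E')\leqslant c$ for \emph{every} subbundle $E'$ of every $E\in K$, after possibly enlarging $c$. This is a Mahler-type compactness statement: a relatively compact subset of $\Bunn_n$ corresponds (via \eqref{eq:bijection-Z}) to lattices in which the length of the shortest nonzero vector is bounded below, and Minkowski's successive minima theorem then forces a uniform upper bound on the degree of subbundles of any fixed rank. Alternatively, one can argue by contradiction from Proposition \ref{prop:finiteness-subbundles}: that proposition implies $\sup_{E'\subset E}\deg(E')<\infty$ for each fixed $E$, and a convergent-sequence argument (using continuity of the covolume in the quadratic form, for each fixed primitive sublattice) upgrades this to uniformity over compact~$K$.

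Combining: if $\CT_n(f)(a_1,\ldots,a_n)\neq 0$ then $f(u\cdot\diag(a))\neq 0$ for some $u$, so $E_{u\diag(a)}\in K$, and applying the bounds to the flag $E_1\subset\cdots\subset E_{n-1}\subset E_g$ yields
\[
a_1\cdots a_k \,=\,\deg(E_k)\,\leqslant\, c \quad (k=1,\ldots,n-1),\qquad
\tfrac{1}{c}\,\leqslant\,a_1\cdots a_n\,=\,\deg(E_g)\,\leqslant\, c,
\]
which is the claimed support condition. The only non-routine step is the uniform subbundle-degree bound over compact $K$; everything else is a direct calculation with the upper-triangular matrix $T$ and Cauchy--Binet.
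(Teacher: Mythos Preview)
Your proof is correct and follows essentially the same approach as the paper: identify the canonical flag $E_1\subset\cdots\subset E_n$ in $E_{u\cdot\diag(a)}$ with $\deg(E_k)=a_1\cdots a_k$, then bound these partial products uniformly over the compact support of $f$ via Proposition~\ref{prop:finiteness-subbundles}. The paper's version is terser---it asserts $V_i/V_{i-1}\simeq\Oc(a_i)$ directly and declares the uniformity over compact $K$ ``clear''---whereas you supply the explicit Cauchy--Binet computation and sketch the Mahler-compactness justification; one small slip is that the subbundle/primitive-sublattice correspondence you invoke is the proposition \emph{preceding} Proposition~\ref{prop:finiteness-subbundles}, not that proposition itself.
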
 

\begin{proof}  For $(a_1, ..., a_n)\in \RR_+^n$ and $u\in U(\RR)$ let
$V(a_1, ..., a_n; u)$ be the vector bundle on $\SZ$ associated to the class of
$u\cdot\diag(a_1, ..., a_n)$ in the double quotient. This bundle has a canonical 
admissible filtration
$$V_1\subset V_2\subset\cdots \subset V_n \,\,=\,\, V(a_1, ..., a_n;u)$$
with $\rk(V_i)=i$ and $V_i/V_{i-1}\simeq \Oc(a_i)$. 
 But given any vector bundle $V$ on $\SZ$, there is $c\in \RR_+$ such that
for any admissible filtration $V_1\subset \cdots \subset V_n=V$ with $\rk(V_i)=i$,
the numbers $a_i=\deg(V_i/V_{i-1})$ satisfy the conditions of Proposition \ref{prop:CT-support}.
This follows from Proposition
 \ref{prop:finiteness-subbundles}, and we can clearly find a common $c$ for bundles varying in a compact
 subset of $\Bunn_n$.   
\end{proof}

\paragraph{B. Twisted constant term and its adjointness.} 
Let
\[
dg\,\,=\,\,\frac{\prod_{i,j=1}^n dg_{ij} } {\det(g)^n}, \quad d^*a \,=\,\prod_{i=1}^n
\frac{da_i}{a_i}
\]
be the standard Haar measures on $GL_n(\RR)$ and $\RR_+^n$. 
We    introduce notation for the factors in the Iwasawa
 decomposition:
  $$GL_n(\RR)=U\cdot \RR_+^n\cdot O_n, \quad g= u\cdot a\cdot k, \quad 
  a=(a_1, ..., a_n).$$
  We write $a=a(g)$, $a_\nu=a_\nu(g)$ etc. as functions of $g\in GL_n(\RR)$. 
  Let $dk$ be the Haar measure on $O_n$ of volume 1. 
  
  The Haar measure $dg$ on $GL_n(\RR)$
  has, in Iwasawa coordinates, the  well known form
   \be\label{eq:haar-iwasawa}
   dg=\delta(a) du\cdot dk \cdot d^*a,  
      \ee
   where the {\em Iwasawa Jacobian} $\delta(a)$ is defined by
   \be\label{eq:iwasawa-jac}
   \delta(a) \,\,= \delta_n(a)\,\,=\,\,\prod_{1\leqslant i<j\leqslant n} \frac{a_j}{a_j}
   \,\,=\,\, \prod_{i=1}^n a_i^{-n+2i-1}. 
   \ee
    See, e.g., \cite{terras}, \S 4.1, Exercise 20 for upper-triangular matrices.
    We also write $\delta_n(g)=\delta_n(a(g))$ for $g\in GL_n(\RR)$. 

\vskip .2cm

Let us  make $H_n$ and $C^\infty_0(\RR_+^n)\supset H_1^{\otimes n}$  into 
 pre-Hilbert spaces via the positive 
 definite Hermitian scalar products
$$
(f_1, f_2)_H \,\,=\,\,\int_{GL_n(\ZZ)\backslash GL_n(\RR)} f_1(g)\overline {f_2(g)} dg,\quad
(\varphi_1, \varphi_2) \,\,=\,\,{1\over 2^n} \int_{\RR_+^n} \varphi_1(a) \overline{\varphi_2(a)}d^*a.
$$
More generally, in each case the scalar product makes sense whenever only one
of the arguments has compact support. 
Define the {\em twisted constant term} of $f\in H_n$ to be the function
\be\label{eq:thwisted-CT}
 \widetilde{\CT}_n(f)(a_1, ..., a_n) \,\,=\,\,\CT(f)(a_1, ..., a_n) \cdot \delta(a)^{1/2}.
\ee

\begin{prop} The map $\widetilde\CT_n: H_n\to C^\infty(\RR_+^n)$
 is adjoint to $*_{1^n}: H_1^{\otimes n}\to H_n$, i.e., we have
\[
(*_{1^n}(\varphi), f)_H \,\,=\,\,(\varphi, \widetilde\CT_n(f)), 
\quad \varphi\in  H_1^{\otimes n},\,\, f\in H_n. 
\]
\end{prop}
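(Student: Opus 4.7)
The strategy is to expand both sides as integrals over $GL_n(\RR)$ via the Iwasawa decomposition and compare.

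\textbf{Step 1 (Iterated Hall product as a flag sum).}
First, I would iterate the defining formula \eqref{eq:hall-product} of the Hall product and show by induction on $n$ that, for $\varphi = \varphi_1\otimes\cdots\otimes\varphi_n \in H_1^{\otimes n}$,
\[
(*_{1^n}\varphi)(E) \,\,=\,\, \sum_{E_1\subset E_2\subset\cdots\subset E_n = E} \delta_n(a)^{-1/2}\prod_{i=1}^n \varphi_i(a_i),
\]
summed over all full flags of subbundles with $\rk(E_i)=i$, where $a_i=\deg(E_i/E_{i-1})$ and $\delta_n(a) = \prod a_i^{-n+2i-1}$ is the Iwasawa Jacobian \eqref{eq:iwasawa-jac}. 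The inductive step consists in combining $a_i^{(n-2i+1)/2}$ factors from the formula for $*_{n-1,1}$ and the inductive expression for $*_{1^{n-1}}$, and checking that the exponents telescope to those of $\delta_n(a)^{-1/2}$.

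\textbf{Step 2 (Identification of pairs (bundle, flag)).}
Using Proposition \ref{prop:bun-matrices} and the bijection between full flags of primitive sublattices in $\ZZ^n$ and cosets in $GL_n(\ZZ)/B_n(\ZZ)$, I would identify the set of pairs $\{(\text{bundle of rank }n,\text{ full flag})\}/\mathord{\sim}$ with $B_n(\ZZ)\backslash GL_n(\RR)/O_n$. Consequently, unfolding the sum over flags in $(*_{1^n}\varphi, f)_H = \int_{GL_n(\ZZ)\backslash GL_n(\RR)} (*_{1^n}\varphi)(g)\,\overline{f(g)}\,dg$ turns it into an integral over $B_n(\ZZ)\backslash GL_n(\RR)/O_n$ of $\delta_n(a(g))^{-1/2}\prod \varphi_i(a_i(g))\,\overline{f(g)}$ (here we use right $O_n$-invariance of $f$).

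\textbf{Step 3 (Iwasawa coordinates and the torus quotient).}
Next I would parameterize $B_n(\ZZ)\backslash GL_n(\RR)/O_n$ via the Iwasawa decomposition $g = uak$. Writing $B_n(\ZZ) = T_n(\ZZ)\ltimes U_n(\ZZ)$ with $T_n(\ZZ) = \{\pm 1\}^n$, the subgroup $U_n(\ZZ)$ acts on $U_n(\RR)$ by left translation while $T_n(\ZZ)$ acts by conjugation on $u$ (and trivially on $a\in\RR_+^n$). By \eqref{eq:haar-iwasawa} the descended measure is $\delta_n(a)\,du\,d^*a$ on $U_n(\RR)\times\RR_+^n$, so that
\[
(*_{1^n}\varphi, f)_H \,\,=\,\, \int_{T_n(\ZZ)\backslash\bigl(U_n(\ZZ)\backslash U_n(\RR)\times\RR_+^n\bigr)}
\delta_n(a)^{1/2}\prod\varphi_i(a_i)\,\overline{f(ua)}\,du\,d^*a,
\]
the factors of $\delta_n(a)^{\pm 1/2}$ combining cleanly.

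\textbf{Step 4 (Recovering the twisted constant term).}
The integrand is $T_n(\ZZ)$-invariant: indeed $f(tut^{-1}\cdot a) = f(t\cdot u\cdot a\cdot t^{-1})=f(ua)$ by left $GL_n(\ZZ)$- and right $O_n$-invariance of $f$, and the remaining factors depend only on $a$. Lifting the integral to $U_n(\ZZ)\backslash U_n(\RR)\times\RR_+^n$ and dividing by $|T_n(\ZZ)|=2^n$ produces exactly the prefactor $\tfrac{1}{2^n}$ of the scalar product on $C^\infty_0(\RR_+^n)$. Finally, integrating $f$ over $U_n(\ZZ)\backslash U_n(\RR)$ produces $\CT_n(f)(a)$ by \eqref{eq:CT-untwisted}, so the remaining $\delta_n(a)^{1/2}$ assembles with it into $\widetilde{\CT}_n(f)(a)$ as in \eqref{eq:thwisted-CT} (using that $\delta_n(a)^{1/2}\in\RR_+$).

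The only delicate point is Step 4: keeping track of the group-theoretic normalizations so that the combinatorial factor $|T_n(\ZZ)|^{-1}=2^{-n}$ matches the one built into the definition of the Hermitian form. Everything else is a direct Iwasawa unfolding, with the coefficient-matching in Step 1 amounting to a concrete induction.
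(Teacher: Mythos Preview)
Your proposal is correct and follows essentially the same unfolding argument as the paper: write $*_{1^n}(\varphi)$ as a pseudo-Eisenstein series over $B_n(\ZZ)\backslash GL_n(\ZZ)$ with the twist $\delta_n(a)^{-1/2}$, unfold the pairing to an integral over $B_n(\ZZ)\backslash GL_n(\RR)$, pass to $U_n(\ZZ)\backslash GL_n(\RR)$ at the cost of $|T_n(\ZZ)|^{-1}=2^{-n}$, and then use the Iwasawa measure formula \eqref{eq:haar-iwasawa} to recover $\widetilde\CT_n(f)$. Your Steps 1 and 4 are in fact more explicit than the paper, which simply asserts the pseudo-Eisenstein formula and the $2^{-n}$ factor without spelling out the induction or the role of $T_n(\ZZ)$.
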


\begin{proof} This is  standard, 
 we   provide details for   convenience of the reader. 
 For $\varphi\in C^\infty_0(\RR^n_+)$ we define a function
     $\widetilde\varphi$ on $U\backslash GL_n(\RR)/O_n$
by 
$$\widetilde\varphi(g) \,\,=\,\,\varphi(a_1(g), ..., a_n(g))\cdot \delta(g)^{-1/2}.$$
Translating the (iterated) formula \eqref{eq:hall-product} for the Hall product, into group-theoretical terms,
we have
$$(*_{1^n} (\varphi)) (g)\,\,=\,\,\sum_{\gamma\in B_n(\ZZ)\backslash GL_n(\ZZ)} \widetilde\varphi(\gamma g)
$$
(a pseudo-Eisenstein series). 
The adjointness then follows from the expression of $dg$ in terms of the Iwasawa factorization:  
 \[
\begin{split}
(*_{1^n}(\varphi), f)_H \,\,= 
\int_{g\in GL_n(\ZZ)\backslash GL_n(\RR} \overline{f(g)} \sum_{\gamma\in B(\ZZ)\backslash
GL_n(\ZZ)} \widetilde\varphi(\gamma g) dg \cr
=\int_{x\in B(\ZZ)\backslash GL_n(\RR)} \overline{f(x)}\widetilde\varphi(x) dx \,\, 
\buildrel\operatorname{def}\over =\,\,
\int_{x\in B(\ZZ)\backslash GL_n(\RR)} \overline{f(x)}\varphi(x) \delta(x)^{-1/2}dx
\cr
=\,\,{1\over 2^n}
\int_{y\in U(\ZZ)\backslash GL_n(\RR)} \overline{f(y)} \varphi(y)\delta(y)^{-1/2} dy \cr
=\,\,{1\over 2^n} \int_{ z \in U(\RR)\backslash GL_n(\RR)} 
\int_{u\in U(\ZZ)\backslash U(\RR)} \overline{f(u z)}
 \varphi( z)\delta(z)^{-1/2}
du d z
\cr
\,\,\buildrel \eqref{eq:haar-iwasawa}\over = \,\, {
1\over 2^n}\int_{a\in\RR_+^n} \overline{{\CT}_n(f)(a)} \varphi(a)
\delta(a)^{+1/2} d^*a
\,\,=\,\, (\varphi, \widetilde\CT_n(f)).
\end{split}
\]
\end{proof}

\begin{cor} The map $\widetilde\CT_n: SH_n\to C^\infty_0(\RR_+^n)$ is
injective. 
\end{cor}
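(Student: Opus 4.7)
The plan is to deduce this immediately from the adjointness relation $(*_{1^n}(\varphi), f)_H = (\varphi, \widetilde\CT_n(f))$ together with positivity of the Hermitian form $(\cdot,\cdot)_H$. This is a general principle: if $T\colon V\to W$ is a linear map between pre-Hilbert spaces with adjoint $T^*\colon W\to V$, then $T^*$ is automatically injective on the image of $T$, since for $w=Tv$ one has $(w,w)_W=(Tv,w)_W=(v,T^*w)_V$, which vanishes when $T^*w=0$.

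Concretely, suppose $f\in SH_n$ satisfies $\widetilde\CT_n(f)=0$. By definition, $SH_n$ is the subspace of $H_n$ spanned by iterated Hall products of elements of $H_1$, i.e.\ by the image of the multiplication map $*_{1^n}\colon H_1^{\otimes n}\to H_n$. So we may write $f=*_{1^n}(\psi)$ for some $\psi\in H_1^{\otimes n}\subset C^\infty_0(\RR_+^n)$. Applying the adjointness proposition just established, I compute
\[
(f,f)_H \,=\, (*_{1^n}(\psi),\, f)_H \,=\, (\psi,\, \widetilde\CT_n(f)) \,=\, (\psi,\,0)\,=\,0.
\]
Since $(\cdot,\cdot)_H$ is positive definite on $H_n\subset L^2(GL_n(\ZZ)\backslash GL_n(\RR))$, this forces $f=0$, which is what was to be proved.

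There is no real obstacle: the whole content has been packaged into the preceding adjointness identity and the positivity of the scalar product. The only thing worth checking is that the pairing $(\psi,\widetilde\CT_n(f))$ is well defined for the $\psi$ and $\widetilde\CT_n(f)$ that appear, i.e.\ that at least one of the two has compact support (so that the integral over $\RR_+^n$ converges absolutely); this is the case since $\psi\in H_1^{\otimes n}$ has compact support, while $\widetilde\CT_n(f)$, though not compactly supported in general, is bounded and smooth by Proposition \ref{prop:CT-support}. Thus the corollary follows formally from the adjointness plus positivity, with no further work.
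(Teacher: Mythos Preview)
Your proof is correct and is essentially the same argument as the paper's: both use that any $f\in SH_n$ has the form $*_{1^n}(\psi)$, then apply the adjointness $(*_{1^n}(\psi),f)_H=(\psi,\widetilde\CT_n(f))$ together with positive definiteness of $(\cdot,\cdot)_H$ to conclude. One small inaccuracy: $\widetilde\CT_n(f)=\delta_n(a)^{1/2}\CT_n(f)$ is not bounded (the Jacobian factor blows up), but this is harmless since, as you note, $\psi$ has compact support and $\widetilde\CT_n(f)$ is smooth, so the pairing is a finite integral regardless.
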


\begin{proof}
 By definition of $SH$ as the subalgebra
generated by $H_1$, a  non-zero element $f\in SH_n$ has the form 
$f=*_{1^n}(\varphi)$ for some $\varphi\in H_1^{\otimes n}$.   We can regard
$\varphi$ as an element of 
 $C^\infty_0(\RR_+^n)$.  
  To prove that $\widetilde \CT_n(*_{1^n}(\varphi))\neq 0$, we notice that by adjointness and  by
   the positivity of the
scalar product on $H$, we have
$$\bigl (\varphi,\widetilde \CT_n(*_{1^n}(\varphi))\bigr) \,\,=\,\, \bigl( *_{1^n}(\varphi), *_{1^n}(\varphi)\bigr)_H
\,\,=\,\,
(f,f)_H
 \,\, > \,\, 0.$$
\end{proof}

\paragraph{C. The principal series intertwiners.}  We use the intertwiners in their adelic form,
as this form accounts for the appearance of the factors involving the Riemann zeta
in  the function $\Phi(s)$  defining the shuffle algebra, see \eqref{eq:Phi}. 

\vskip .2cm

Let $A_n$ be the diagonal subgroup in $GL_n$. We have the identification
$$\RR_+^n\,\,=\,\, U(\Aen) A_n(\QQ)\bigl\backslash GL_n(\Aen)\bigr/ K_n, \quad K_n = O_n \prod_p GL_n(\ZZ_p).$$
For $w\in\Sen_n$ let $U_w = U\cap (w^{-1}Uw)$. Using the above identification,
we define the operator
$$M_w: C^\infty_0(\RR_+^n) \lra C^\infty(\RR_+^n), \quad (M_w \varphi)(g) =
 \int_{u\in (U(\Aen)\cap U_w (\Aen))  \backslash U(\Aen)} \varphi(w ug) du, 
 $$
 cf.
 \cite{moeglin-waldspurger-book}, II.1.6. More generally, $M_w(\varphi)$ can be defined
if, for any $g$, the function $u\mapsto \varphi(wug)$ on
the domain of integration has sufficiently fast decay (for example, has compact support). 
Here is an example, to be used later.

\vskip .2cm 

 We consider the following domain in $\CC^n$:
 \be\label{eq:C-n>}
 \CC^n_>\,\,=\,\,\bigl\{ s=(s_1, ..., s_n): \, s_\nu-s_{\nu+1}>1, \nu=1, ..., n\bigl\},
 \ee
 where we put $s_{n+1}=0$. For $w\in \Sen_n$ put
 \be
 \Phi_w (s) \,\,=\,\,\prod_{1\leqslant i<j\leqslant n\atop w(i)>w(j)} \Phi(s_i-s_j).
 \ee

 \begin{prop}\label{prop:M-w-a-s}
 If $s=(s_1, ..., s_n)\in\CC^n_>$, then applying $M_w$ to the function $a\mapsto a^s$
 gives a convergent integral, and it is found as follows:
 \[
 M_w(a^s) \,=\, a^{w(s)} \Phi_w(s). 
 \]
  \end{prop}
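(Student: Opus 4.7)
The plan is to prove this via the standard cocycle/reduction-to-rank-one argument for intertwining operators, combined with the adelic Tate computation that produces the zeta factors.

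First I would reduce to simple reflections. Writing $w = s_{i_1}\cdots s_{i_\ell}$ as a reduced decomposition with $\ell=\ell(w)$ equal to the number of inversions $\#\{i<j:\,w(i)>w(j)\}$, one has the multiplicativity
\[
M_{w_1w_2}\,=\,M_{w_1}\circ M_{w_2}\qquad\text{whenever }\ell(w_1w_2)=\ell(w_1)+\ell(w_2),
\]
which follows from the matching decomposition of $U(\Aen)\cap U_w(\Aen)\bs U(\Aen)$ into a product (up to a null set) of the corresponding pieces for $w_1$ and $w_2$. Granting this, both sides of the claimed identity are multiplicative in $w$ for such decompositions: for the right-hand side, this is the Steinberg cocycle property for the product over inversions. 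So it suffices to prove the formula for a simple transposition $w=s_i=(i,i+1)$ and then iterate.

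For $w=s_i$, the quotient $U(\Aen)\cap U_w(\Aen)\bs U(\Aen)$ is one-dimensional, parametrized by the adelic coordinate $x\in\Aen$ of the $(i+1,i)$-entry of $U$. The integrand $a^s$ is $K_n$-invariant and transforms under the torus with weight $s$, so after Iwasawa-decomposing $s_i\cdot u_x\cdot a$ place by place, the integral factors as a product
\[
M_{s_i}(a^s)\,=\,a^{s_i(s)}\cdot\prod_v I_v(s_i-s_{i+1}),
\]
where $I_v(z)$ is a local rank-one intertwiner integral. At a finite place $p$ one recognizes the classical Gindikin-Karpelevich computation
\[
I_p(z)\,=\,\int_{\QQ_p}\max(1,|x|_p)^{-z-1}\,dx\,=\,\frac{1-p^{-z}}{1-p^{-z-1}},
\]
convergent for $\RRe(z)>0$ (absolutely for $\RRe(z)>1$ after bounding the $\max$ integrand), while at the archimedean place a direct calculation with the Iwasawa decomposition in $GL_2(\RR)$ yields
\[
I_\infty(z)\,=\,\pi^{-z/2}\Gamma(z/2)\Big/\pi^{-(z+1)/2}\Gamma((z+1)/2).
\]
Taking the product over all places produces precisely $\zeta^*(z)/\zeta^*(z+1)=\Phi(z)$ with $z=s_i-s_{i+1}$, which is the claim for a simple reflection.

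For the iteration, one applies $M_{s_{i_1}}$ to $a^{w'(s)}\Phi_{w'}(s)$ with $w'=s_{i_2}\cdots s_{i_\ell}$; the scalar $\Phi_{w'}(s)$ passes outside and one picks up an extra factor $\Phi(\alpha)$, where $\alpha$ is the root turned from positive to negative by the last reflection. Standard root-combinatorics shows these new factors are exactly the inversions contributed by lengthening from $w'$ to $w$, so the product telescopes to $\Phi_w(s)$. The main technical point I expect to need care with is convergence: each local integral $I_p(s_i-s_{i+1})$ needs $\RRe(s_i-s_{i+1})>1$ for absolute convergence and for the Euler product $\prod_p I_p$ to converge; this is precisely why the domain $\CC^n_>$ in \eqref{eq:C-n>} is imposed, and after each reflection in the iteration one must verify that the new parameter $w'(s)$ still lies in the region where the next local/archimedean integrals converge, which again follows from the inequalities $\RRe(s_\nu-s_{\nu+1})>1$.
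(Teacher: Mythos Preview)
Your approach is correct and is precisely the standard proof of the Gindikin--Karpelevich formula that the paper invokes; the paper's own proof is nothing more than a citation of this classical result together with the observation that the adelic intertwiner factors as an Euler product of local ones. Two minor slips worth fixing: the local integral $\int_{\QQ_p}\max(1,|x|_p)^{-z-1}\,dx$ actually evaluates to $(1-p^{-z-1})/(1-p^{-z})=\zeta_p(z)/\zeta_p(z+1)$, not the reciprocal you wrote; and in the iteration, the intermediate parameter $w'(s)$ will \emph{not} lie in $\CC^n_>$ --- what you really need (and what the reduced-word property gives) is that each successive simple-root pairing pulls back to a \emph{positive} root evaluated at the original $s$, and those real parts exceed $1$ as sums of the consecutive differences $\RRe(s_\nu-s_{\nu+1})>1$.
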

  
  \begin{proof} This  is a version of the classical Gindikin-Karpelevich formula.  
  More precisely, the 
  value of the adelic intertwiner is found as the
  Euler product of the values of similarly defined local intertwiners
  (involving the integration over the $p$-adic or real group).
  Each  local integral is found by Gindikin-Karpelevich to
  contribute the factor
  \[
 \prod_{1\leqslant i<j\leqslant n\atop w(i)>w(j)} \frac{\zeta_p(s_i-s_j)}{\zeta_p(s_i-s_j+1)},
  \]
    where $\zeta_p$ is the $p$th Euler factor of the Riemann zeta, or the Gamma factor for
    $p=\infty$. 
  \end{proof}
 
 \vskip .2cm
 
 For $\varphi'\in C^\infty(\RR^{n'}_+)$ and $\varphi''\in C^\infty(\RR^{n''}_+)$
we define $\varphi'\otimes\varphi''\in C^\infty(\RR^{n'+n''}_+)$ by
\be\label{eq:otimes-twisted}
(\varphi \otimes\varphi'')(a_1, ..., a_{n'+n''}) \,\,=\,\,\varphi'(a_1, ..., a_{n'})\varphi''(a_{n'+1}, ..., a_{n'+n''}).
\ee
We will use similar notation in other situations without special explanation. 
 
\vskip .3cm

Having now defined all the ingredients of the equality
   \eqref{eq:CT-Eis}, we explain how it is proved. This is again a standard
argument,  using the Bruhat decomposition of  a Grassmannian into cells
labelled by shuffles, cf. \cite{moeglin-waldspurger-book}, II.1.7
for the case of any parabolic subgroup in any reductive group.

\vskip .2cm

To give some details in our particular case, 
let $n=n'+n''$ and 
$P_{n',n''}\subset GL_n$ be the parabolic (block-lower-triangular) subgoup corresponding to $(n',n'')$.
We  denote $U_{n',n''}$ its unipotent radical and $A_{n',n''}=GL_{n'}\times GL_{n''}$ the Levi subgroup. 
Then the Iwasawa decompostion implies that

  \be\label{eq:parabolic-iwasawa}
  \begin{gathered}
  \bigl( GL_{n'}(\QQ)\backslash GL_{n'}(\Aen)/ K_{n'}\bigr) \times 
\bigl( GL_{n'}(\QQ)\backslash GL_{n'}(\Aen)/ K_{n'}\bigr)\,\,\buildrel\sim\over\lra
\cr
\buildrel\sim\over\lra
 \,\, \bigl(U_{n',n''}(\Aen) A_{n',n''}(\QQ)\bigr)\bigl\backslash
GL_n(\Aen)\bigr/K_n.
\end{gathered}
\ee

\vskip .2cm

Given $f'\in H_{n'}$, $f''\in H_{n''}$, let $f$ be the function on the right hand side of
\eqref{eq:parabolic-iwasawa} corresponding to the function
 $$
(g',g'') \,\,\longmapsto\,\, |\det(g')|^{n''/2}\cdot |\det(g'')|^{-n'/2}\cdot f'(g') f''(g'')
$$
on the left hand side. Here $|a|$ is the adelic norm of $a$.
The Hall product $f'*f''$ is then given by the parabolic pseudo-Eisenstein series
$$(f'*f'')(g)\,\, = \sum_{\gamma\in P_{n',n''}(\QQ) \backslash GL_n(\QQ)} f(\gamma g).
$$
Now, writing
$$\widetilde \CT_n(f'*f'') (g)\,\,= 
\int_{u\in U(\QQ)\backslash U(\Aen)}  \,\, \sum_{\gamma\in P_{n',n''}(\QQ) \backslash GL_n(\QQ)}
f(\gamma u g) \delta_n(g)^{1/2} du,$$
we notice that the Grassmannian $\operatorname{Gr}(n',\QQ^{n}) = P_{n',n''}(\QQ)\backslash GL_n(\QQ)$
splits, under the right $U(\QQ)$-action, into $n\choose n'$ orbits (Schubert cells) 
$$\Sigma_w = P_{n',n''}(\QQ)\backslash w U(\QQ),\quad  w\in Sh(n', n'').$$
Notice that for $w\in Sh(n', n'')$ we have $U_w = U\cap w^{-1} P_{n',n''}w$.
This means that we can write each $\gamma\in\Sigma_w$ uniquely in the form
$\gamma = P_{n'n''}(\QQ)\cdot w \cdot v$ for $v\in U_w(\QQ)\backslash U(\QQ)$ and so
\[
\begin{split}
\widetilde \CT_n(f'*f'')(g) \,\, =\sum_{w\in Sh(n',n'')} \int_{u\in U(\QQ)\backslash U(\Aen)} \sum_{v\in U_w(\QQ)
\backslash U(\QQ)} f(wvug) \delta_n(g)^{1/2}du \cr
=\sum_{w\in Sh(n',n'')} \int_{\widetilde u\in U_w(\QQ)\backslash U(\Aen)} 
f(w\widetilde u g) \delta_n(g)^{1/2}d\widetilde u,
\end{split}
\]
and we identify the integral over $\widetilde u$ corresponding to $w$, with
$M_w(\widetilde \CT_{n'}(f')\otimes\widetilde\CT_{n''}(f''))$.  Note that this argument shows, in particular,
that $M_w$ is indeed applicable in this case as the domain of integration reduces
to a compact one (since  all we did was re-partition the  integral for $\widetilde\CT_n(f'*f'')(g)$,
which was over  a compact
domain to begin with). We leave the rest to the reader. 

\vskip  .2cm

Let us note a version of the above statement for the constant term of the $n$-tuple Hall product.
The proof is similar. 

\begin{prop} \label{prop:CT-iterated}
Let $\varphi_1, ..., \varphi_n\in C^\infty_0(\RR_+)$ and
$\varphi = \varphi_1\otimes ...\otimes\varphi_n \in C^\infty_0(\RR_+^n)$.
Then
\[
\widetilde\CT_n(*_{1^n}(\varphi)) \,\,=\,\,\sum_{w\in\Sen_n} M_w(\varphi). \qed
\]

\end{prop}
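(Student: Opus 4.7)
The plan is to repeat the argument that established the identity \eqref{eq:CT-Eis}, but now with the full Borel subgroup $B_n$ in place of the parabolic $P_{n',n''}$. This replaces the shuffle-indexed Bruhat decomposition of $P_{n',n''}\backslash GL_n$ by the full Bruhat decomposition of $B_n\backslash GL_n$, whose cells are labelled by all of $\Sen_n$ rather than by shuffles of a given type.

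First I would observe that the iterated Hall product $*_{1^n}(\varphi)$ is a pseudo-Eisenstein series on the Borel: translating the iterated formula \eqref{eq:hall-product} into group-theoretic language exactly as in Part B, one gets
\[
(*_{1^n}(\varphi))(g) \,\,=\,\, \sum_{\gamma \in B_n(\QQ)\backslash GL_n(\QQ)} \widetilde\varphi(\gamma g),
\]
where $\widetilde\varphi(g) = \varphi(a_1(g), \ldots, a_n(g))\cdot \delta_n(g)^{-1/2}$ is viewed as a function on $U(\Aen)A_n(\QQ)\backslash GL_n(\Aen)/K_n$. Unfolding $\widetilde\CT_n$ then gives
\[
\widetilde\CT_n(*_{1^n}(\varphi))(g) \,\,=\,\, \int_{u \in U(\QQ)\backslash U(\Aen)} \sum_{\gamma \in B_n(\QQ)\backslash GL_n(\QQ)} \widetilde\varphi(\gamma u g)\, \delta_n(g)^{1/2}\, du.
\]

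Next I would apply the Bruhat decomposition $GL_n(\QQ) = \bigsqcup_{w \in \Sen_n} B_n(\QQ) w U(\QQ)$ to split the sum into cells $\Sigma_w = B_n(\QQ)\backslash B_n(\QQ) w U(\QQ)$. Since $B_n \cap w^{-1} B_n w$ has unipotent part $U_w = U \cap w^{-1}Uw$, each $\gamma \in \Sigma_w$ admits a unique representation $\gamma = B_n(\QQ)\cdot w \cdot v$ with $v \in U_w(\QQ)\backslash U(\QQ)$. Folding the sum over $v$ into the integral over $u$ converts the integration domain to $U_w(\QQ)\backslash U(\Aen)$, and then collapsing the $U_w(\QQ)\backslash U_w(\Aen)$ factor (volume one) together with the left $U(\Aen)$-invariance of $\widetilde\varphi$ turns the $w$-cell contribution into exactly $M_w(\varphi)(g)$. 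Summing over $w \in \Sen_n$ yields the claim.

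The only real technical point is justifying the interchange of the sum over $\gamma$ with the $u$-integral and the subsequent refolding. This is routine because $\varphi$ has compact support: by the same reasoning as in Proposition \ref{prop:CT-support}, only finitely many Bruhat cells contribute at any given $g$, so all manipulations reduce to a finite sum of absolutely convergent integrals over compact domains. An equivalent route would be induction on $n$ using \eqref{eq:CT-Eis} with $(n',n'')=(n-1,1)$, combined with the standard factorization $\Sen_n \cong Sh(n-1,1)\times \Sen_{n-1}$ and the length-additive multiplicativity of the intertwiners $M_w$; this is however morally identical to the direct Bruhat argument above.
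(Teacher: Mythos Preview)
Your proposal is correct and is exactly the approach the paper has in mind: the paper simply says ``The proof is similar'' to that of \eqref{eq:CT-Eis}, and what you wrote is precisely that similar proof, replacing the parabolic $P_{n',n''}$ by the Borel $B_n$ so that the Bruhat cells are indexed by all of $\Sen_n$ rather than by shuffles. Your alternative inductive route via \eqref{eq:CT-Eis} with $(n',n'')=(n-1,1)$ is also valid and, as you note, amounts to the same computation.
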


\paragraph{D. The Mellin transform of the constant term.} 

For $f\in H_n$ we set
$Ch_n(f) = \Mc(\widetilde \CT_n(f))$.  

\begin{prop}\label{prop:Ch-convergence}
The Mellin integral for $Ch_n(f)$ converges to an analytic function in the region
$\CC^n_>$. 
 
\end{prop}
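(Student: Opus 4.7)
The plan is to transform the Mellin integral by a logarithmic change of variables, so that the support of $\widetilde\CT_n(f)$ (controlled by Proposition \ref{prop:CT-support}) becomes a product of half-lines and a bounded interval, and then to identify the combined exponential weight coming from $\delta(a)^{1/2}$ and $a^s$. Absolute convergence and analyticity on $\CC^n_>$ then follow from standard estimates on Laplace transforms over half-lines.

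Concretely, set $t_i=\log a_i$ and $u_k=t_1+\cdots+t_k=\log(a_1\cdots a_k)$. The Jacobian is $1$, and Proposition \ref{prop:CT-support} yields $C:=\log c$ such that $\widetilde\CT_n(f)$ is supported where $u_k\leqslant C$ for $k=1,\ldots,n-1$ and $|u_n|\leqslant C$. Using \eqref{eq:iwasawa-jac}, a direct calculation gives
\[
\log\delta(a)^{1/2}\,=\,-\sum_{i=1}^{n-1}u_i+\frac{n-1}{2}\,u_n,
\]
while $\log a^s=\sum_i s_i t_i$ rewrites as $\sum_{i=1}^{n-1}(s_i-s_{i+1})u_i+s_n u_n$. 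Combining, the Mellin integral becomes
\[
Ch_n(f)(s)\,=\,\int \CT_n(f)(a)\,\exp\!\Bigl(\sum_{i=1}^{n-1}(s_i-s_{i+1}-1)\,u_i+\bigl(s_n+\tfrac{n-1}{2}\bigr)\,u_n\Bigr)\,du_1\cdots du_n.
\]

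Since $\CT_n(f)$ is bounded by $\max|f|$, absolute convergence on $\CC^n_>$ is immediate: the $u_n$-integral runs over the bounded interval $[-C,C]$ and places no constraint on $s_n$, while for each $i<n$ the $u_i$-integral runs over $(-\infty,C]$ and converges provided $\RRe(s_i-s_{i+1})>1$, which is precisely one of the defining inequalities for $\CC^n_>$ (the remaining condition $\RRe(s_n)>1$ in $\CC^n_>$ is simply compatible). To pass to analyticity, I would dominate the integrand on each compact subset $K\subset\CC^n_>$ by a fixed integrable function obtained by replacing each $\RRe(s_i-s_{i+1})$ by its minimum on $K$; dominated convergence justifies differentiation under the integral sign in each $s_i$ separately, so $Ch_n(f)$ is holomorphic on $\CC^n_>$.

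The main obstacle is the weight computation for $\delta(a)^{1/2}$ in $u$-coordinates, since it is precisely the exponential growth of this weight as some $u_k\to-\infty$ for $k<n$ that forces the strict inequalities $\RRe(s_k-s_{k+1})>1$ rather than the naive $>0$. Once this weight is visible, both convergence and analyticity reduce to routine dominated-convergence arguments.
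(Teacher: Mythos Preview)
Your argument is correct and follows the same route as the paper: pass to the coordinates $u_k=\log(a_1\cdots a_k)$, use the support bound from Proposition \ref{prop:CT-support} together with the boundedness of $\CT_n(f)$, and reduce to a product of one-variable exponential integrals over half-lines (plus a bounded interval for $u_n$). The paper phrases the $\delta^{1/2}$ contribution as a shift of the Mellin variable rather than computing it explicitly in $u$-coordinates, and is slightly less careful about the bounded $u_n$-range and about analyticity, but the substance is the same.
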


\begin{proof} The Mellin transform of  $\widetilde\CT_n(f)(a)=\delta_n(a)^{1/2}\CT_n(a)$
differs from $\Mc(\CT_n(a))$ by a shift of variables, and our statement is equivalent to saying that
$\Mc(\CT_n(a))$ converges for 
$$\RRe(s_1-s_2)>0, \,\,\RRe(s_2-s_3)>0,\,\, \cdots, \RRe(s_{n-1}-s_n)>0,\,\,\RRe(s_n)>0.$$
To see this, note that 
 by Proposition \ref{prop:CT-support} and of boundedness of $\CT_n(f)$,
the integral 
is bounded by 
\[
\begin{split} \operatorname{const}\int_{a_1=0}^c \int_{a_1a_2=0}^c \cdots \int_{a_1...a_n=0}^c
a_1^{s_1-s_2} (a_1a_2)^{s_2-s_3} \cdots (a_1...a_n)^{s_n}\times \cr
\times d^*a_1 d^*(a_1a_2) \cdots d^*(a_1...a_n).
 \end{split}
 \]
 Since $\int_0^c a^s d^*a$ converges for $\RRe(s)>0$, the claim follows. 
  \end{proof}
  
   \begin{prop}\label{prop:Ch-mer}    For any $f\in SH_n$, the function $Ch_n(f)$ extends to a 
   meromorphic function on $\CC^n$. 
  \end{prop}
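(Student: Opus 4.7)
The plan uses that $SH_n$ is generated under the Hall product by $H_1$: every $f\in SH_n$ is a finite linear combination of $n$-fold Hall products $*_{1^n}(\varphi_1\otimes\cdots\otimes\varphi_n)$ with $\varphi_\nu\in H_1$. By linearity of $Ch_n$ it suffices to treat such an elementary $f$, and Proposition \ref{prop:CT-iterated} rewrites
\[
\widetilde\CT_n(f)\,\,=\,\,\sum_{w\in\Sen_n} M_w(\varphi),\quad \varphi\,:=\,\varphi_1\otimes\cdots\otimes\varphi_n.
\]
The task therefore reduces to proving meromorphicity on $\CC^n$ of $\Mc(M_w(\varphi))$ for each $w\in\Sen_n$.

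The key step is an explicit computation of $\Mc(M_w(\varphi))$ by combining Paley-Wiener inversion for $\varphi$ with Proposition \ref{prop:M-w-a-s}. Write $F_\nu = \Mc\varphi_\nu\in\PW(\CC)$ and $F(s)=F_1(s_1)\cdots F_n(s_n)\in\PW(\CC^n)$; then for any $\sigma\in\RR^n$,
\[
\varphi(a)\,\,=\,\,\frac{1}{(2\pi i)^n}\int_{\sigma+i\RR^n} F(s)\,a^{-s}\,ds,
\]
absolutely convergent by Paley-Wiener decay on vertical subspaces. I would choose $\sigma$ so that the character exponent $-s$ lies in $\CC^n_>$ along the contour (concretely, $\sigma_{\nu+1}-\sigma_\nu>1$ for $\nu<n$ and $\sigma_n<-1$). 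A Fubini argument---justified by the simultaneous exponential decay of $F$ on $\sigma+i\RR^n$ and the absolute convergence of the local intertwining integrals defining $M_w$---permits interchanging $M_w$ with the contour integral. Proposition \ref{prop:M-w-a-s} then evaluates $M_w(a^{-s})$ as a product of a character and an explicit product of $\Phi$-factors, and a permutation change of variables identifies the Mellin transform of $M_w(\varphi)$ on a suitable tube domain with a function of the form
\[
G_w(s)\,\,=\,\,F(w^{-1}(s))\cdot(\text{product of }\Phi\text{-factors in the }s_i-s_j).
\]

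Since $F\in\PW(\CC^n)$ is entire and $\Phi(s)=\zeta^*(s)/\zeta^*(s+1)$ is meromorphic on $\CC$, the function $G_w$ is meromorphic on all of $\CC^n$. Summing over $w\in\Sen_n$ and invoking the uniqueness of analytic continuation from the region $\CC^n_>$ of Proposition \ref{prop:Ch-convergence} produces the required meromorphic extension of $Ch_n(f)$.

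The main obstacle is the analytic justification of moving $M_w$ inside the Mellin inversion contour. This demands an absolute-convergence estimate for the resulting double integral, which one should obtain by combining the rapid Paley-Wiener decay of $F$ on vertical subspaces with uniform bounds on the local (Gindikin-Karpelevich) intertwining integrals over the unipotent radical---the same integrals that enter the proof of Proposition \ref{prop:M-w-a-s}. Once Fubini is secured, the explicit evaluation via Proposition \ref{prop:M-w-a-s}, the change of variable $s\mapsto w^{-1}(s)$, and the observation that $G_w$ is manifestly meromorphic are all formal bookkeeping.
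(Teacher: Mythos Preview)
Your approach is essentially the same as the paper's. Both arguments reduce to an $n$-fold Hall product, express $\varphi$ via Mellin inversion over a contour in $-\CC^n_>$, apply the Gindikin--Karpelevich evaluation of Proposition \ref{prop:M-w-a-s} to each character, and read off a manifestly meromorphic function. The only organizational difference is that the paper routes the computation through the Eisenstein--Selberg series $\Een(-s_1)*\cdots*\Een(-s_n)$ and its constant term formula (Proposition \ref{prop:eisenstein-selberg}(c)), whereas you first invoke Proposition \ref{prop:CT-iterated} and then Mellin-transform each $M_w(\varphi)$ separately; the two orderings commute and yield the same sum $\sum_w F(w(s))\Phi_w(s)$.

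On the analytic point you flag: the paper's justification is not quite a direct Fubini for the intertwining integral, but rather the observation that the resulting sum has at most polynomial growth on vertical subspaces, so that Proposition \ref{prop:mellin-inversion} applies. This is secured by a short lemma showing $\Phi(\sigma_0+it)$ is bounded (in fact decaying) for $\sigma_0>1$, combined with the Paley--Wiener decay of the $F_\nu$. Your sketch (``uniform bounds on the local intertwining integrals'') points to the same estimate, so there is no gap---but you should make the polynomial-growth check explicit rather than leave it inside a Fubini heuristic.
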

  
  Before giving the proof, we recall the properties of a classical type of Eisenstein series
  due to Selberg \cite{selberg}. 
  
   For any $s\in\CC$ we denote by $\Een(s)$ the following function on $\Bunn_1$:
 \be\label{eq:Een(s)}
 \Een(s): \,E\longmapsto \deg(E)^s \,=\, \exp( s\cdot \ln(\deg(E))).
 \ee
 The (formal) Hall product
 \be\label{eq:Eisenstein-Selberg}
 \Een(s_1)*\cdots \Een (s_n) \,\,=\,\, *_{1^n}(a_1^{s_1}...a_n^{s_n})
 \ee
 is a series of functions on $\Bunn_n$, 
 known as the (primitive) {\em Eisenstein-Selberg series},
see \cite{selberg} and  \cite{jorgensen-lang} \S 8.3. 

\begin{prop}\label{prop:eisenstein-selberg}
 (a) The series \eqref{eq:Eisenstein-Selberg}
converges
  for $s=(s_1, ..., s_n)\in\CC^n_>$, 
    to a $C^\infty$-function on $\Bunn_n$.
  
  (b) For any $g\in\Bunn_n$ the 
    function $\bigl(\Een(s_1)*\cdots *\Een(s_n)\bigr)(g)$ 
  extends to a meromorphic function in the $s_i$,
  with position and order of poles independent on $g$. 
  
  (c)   The twisted constant term of $\bigl(\Een(s_1)*\cdots *\Een(s_n)\bigr)(g)$ 
  as a function on $g$ is given by  
  $$\widetilde\CT_n \bigl(\Een(s_1)*\cdots *\Een(s_n))(a_1, \dots, a_n\bigr) \,\,=\,\,\sum_{w\in\Sen_n}
  a_1^{s_{w(1)} }\cdots a_n^{s_{w(n)} }\prod_{i<j\atop w(i)>w(j)} \Phi(s_i-s_j).
  $$
\end{prop}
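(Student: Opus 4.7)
The plan is to handle the three assertions in the stated order, with the bulk of the work concentrated in part (b).

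For part (a), I would iterate the definition (3.3) of the Hall product to express
$$(\Een(s_1)*\cdots*\Een(s_n))(E) \;=\; \sum_{0=V_0\subset V_1\subset\cdots\subset V_n = E}\prod_{i=1}^n \deg(V_i/V_{i-1})^{s_i}\cdot \omega(V_\bullet),$$
where the sum runs over admissible complete flags of subbundles of $E$ with $\rk V_i = i$, and $\omega(V_\bullet)$ is the product of the Iwasawa twists from successive applications of (3.3). Applying Proposition \ref{prop:finiteness-subbundles} inductively to count flags with bounded successive-quotient degrees, one bounds the sum by a multiple Mellin integral that converges precisely when $\RRe(s_\nu - s_{\nu+1}) > 1$ for every $\nu$ (with $s_{n+1}=0$), i.e.\ on $\CC^n_>$. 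Smoothness in $E$ then follows because the Iwasawa parameters of each flag depend smoothly on $E$ locally, and differentiation only introduces logarithmic factors that do not spoil absolute convergence.

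For part (c), observe the tautological identity
$$\Een(s_1)*\cdots *\Een(s_n) \;=\; *_{1^n}(a^s),\qquad a^s := a_1^{s_1}\cdots a_n^{s_n},$$
which is immediate from the definition of $\Een(s_i)$ and the Hall product. Although $a^s$ lies outside $C^\infty_0(\RR_+^n)$, the Bruhat-decomposition argument of Proposition \ref{prop:CT-iterated} transplants verbatim: in the domain $\CC^n_>$, part (a) guarantees absolute convergence of every sum and integral appearing in the unfolding, so the decomposition of $B_n(\QQ)\backslash GL_n(\QQ)$ into Schubert cells indexed by $\Sen_n$ yields
$$\widetilde\CT_n(\Een(s_1)*\cdots*\Een(s_n)) \;=\; \sum_{w\in\Sen_n} M_w(a^s).$$
Substituting the evaluation $M_w(a^s) = a^{w(s)}\Phi_w(s)$ from Proposition \ref{prop:M-w-a-s} gives the claimed formula.

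Part (b) is the main obstacle, since meromorphic continuation out of $\CC^n_>$ is the classical hard step in the theory of Eisenstein series. For $n=1$, $\Een(s)(E) = \deg(E)^s$ is entire; for $n=2$ one can invoke the classical meromorphic continuation of the Eisenstein-Maass series due to Selberg \cite{selberg}. For general $n$ the cleanest route is to appeal to the Langlands meromorphic continuation theorem for minimal parabolic Eisenstein series on $GL_n$, recorded in \cite{moeglin-waldspurger-book}; a more self-contained route specific to our setting is Bernstein's meromorphic continuation principle, showing that the product $\prod_{i<j}\zeta^*(s_i-s_j+1)\cdot(\Een(s_1)*\cdots*\Een(s_n))(g)$ extends holomorphically across each wall by combining the intertwiner relation $M_{ww'}=M_w M_{w'}$ for reduced products with the functional equation of $\zeta^*$. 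Independence of the pole positions and orders from $g$ is then a consequence of (c): the poles of $\widetilde\CT_n(\Een(s_1)*\cdots*\Een(s_n))$ lie in the explicit divisor of $\prod_{i<j}\Phi(s_i-s_j)$, which is $g$-free; since $H$ is spanned by (pseudo-)Eisenstein series, any pole of the full Eisenstein series must manifest in its constant term for generic $g$, forcing the pole divisor of the series itself to be $g$-independent.
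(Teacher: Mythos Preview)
Your treatment of (a) and (c) is essentially what the paper does. The paper simply cites \cite{jorgensen-lang}, \S 8.5 for convergence in (a), and for (c) it applies the constant-term formula \eqref{eq:CT-Eis} (equivalently Proposition \ref{prop:CT-iterated}) to the function $a^s$ for $s\in\CC^n_>$, noting exactly as you do that the decay/convergence established in (a) legitimizes extending that formula beyond $C^\infty_0$, and then invokes Proposition \ref{prop:M-w-a-s}. For (b) the paper is even terser than you are, merely citing \cite{jorgensen-lang}, \S 8.6--7; your alternative routes via \cite{moeglin-waldspurger-book} or Bernstein's continuation principle are equally legitimate appeals to the classical theory.

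One genuine slip in your supplementary argument for the $g$-independence of the pole divisor: the assertion that ``$H$ is spanned by (pseudo-)Eisenstein series'' is false. Only the subalgebra $SH$ is generated from $H_1$, and for $n\geqslant 2$ the space $H_n$ contains cusp forms orthogonal to $SH_n$. So pairing the Eisenstein series against $H$ does not reduce to pairing its constant term against test functions. The correct mechanism, implicit in all the references you and the paper cite, is rather that an automorphic form of moderate growth with vanishing constant term is of rapid decay, and the Maass--Selberg inner-product computation (or Langlands' functional-equation argument) then pins the poles to those visible in the formula of (c). Since your primary route for (b) is a citation anyway, this is a flaw in the heuristic gloss rather than a gap in the proof proper.
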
  

\begin{proof} For (a), 
see, e.g., \cite{jorgensen-lang}, \S 8.5, Remark, and take into
    account the Ringel twist in the definition of $*$ which translates the shifts by $1/2$
    into shifts by 1.
        See also \cite{goldfeld}, Proposition 10.4.3 for a slightly weaker statement. 
        
        For (b),  see \cite{jorgensen-lang}, \S 8.6-7.
        
         Finally, (c) follows by 
          the formula   \eqref{eq:CT-Eis} applied to
         the function $a^s$, $s\in\CC^n_>$ (the application is legal because of
         the decay conditions) and then using Proposition 
         \ref{prop:M-w-a-s}. 
     \end{proof}

\noindent {\em  Proof of Proposition \ref{prop:Ch-mer}: } 
It is enough to assume that $f=f_1*\cdots * f_n$, where $f_\nu\in H_1=C^\infty_0(\RR_+)$. 
  Let $F_\nu=\Mc(f_\nu)\in\PW(\CC)$ be the Mellin transform of $f_\nu$.  
    Then $f_\nu=\Nc(F_\nu)$, and
  the inverse Mellin integral (understood as in Proposition \ref{prop:mellin-inversion})
   can be taken along any vertical line $\RRe(s)=\sigma_\nu$.
   
     Let us now choose $\sigma_1, ..., \sigma_n$ such that $\sigma_{\nu+1}-\sigma_{\nu}>1$ for each $\nu=1, ..., n-1$
     and $\sigma_n>1$.
  The equalities $\Nc(F_\nu)=f_\nu$ then imply that
    $$f(g)\,\,= {1\over(2\pi i)^n} \int_{\RRe(s_\nu)=\sigma_\nu}F_1(s_1)\cdots F_n(s_n)
   \bigl( \Een(-s_1)*\cdots *\Een(-s_n)\bigr)(g) ds_1\cdots ds_n.
    $$ 
    Substituting the formula for the twisted constant term of $\bigl( \Een(-s_1)*\cdots *\Een(-s_n)\bigr)(g)$
    from Proposition \ref{prop:eisenstein-selberg}(c) into the integral for $f(g)$, we represent
    $\widetilde\CT_n(f)$ as the inverse Mellin transform
  of the function
  \[
  F(s_1, ..., s_n) \,\,=\sum_{w\in\Sen_n} F_1(s_{w(1)})\cdots F_n(s_{w(n)}) \prod_{i<j\atop w(i)>w(j)} \Phi(s_j-s_i),
  \]
  which is analytic in the region $\RRe(s_{\nu+1})-\RRe(s_\nu)>1$. Further, if we take $\sigma_1, ..., \sigma_n$
  such that $\sigma_{\nu+1}-\sigma_\nu>1$, $\sigma_n>1$, then $F$ is bounded on the vertical subspace 
  $\RRe(s_\nu)=s_\nu$.
  Indeed, each $F_i$, being a Paley-Wiener function, decays exponentially at the imaginary infinity.
  On the other hand, the lemma below shows that   $\Phi(s)$   is  bounded  on
  vertical lines $\RRe(s)=\sigma_0>1$. Therefore we can apply the Mellin inversion
  (Proposition \ref{prop:mellin-inversion}) to $F$ and obtain that
  $Ch_n(f)=\Mc(\widetilde\CT_n(f))=F(s_1, ..., s_n) $ and so it is meromorphic.  
  
  \begin{lemma} For every $\sigma_0 > 1$, the function $\Phi(\sigma_0+it)$
is bounded, as a function of $t\in\RR$, and decays as $|t|\to\infty$.
\end{lemma}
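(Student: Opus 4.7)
The plan is to expand $\Phi(s)$ into its three constituent factors coming from the definition $\zeta^*(s) = \pi^{-s/2}\Gamma(s/2)\zeta(s)$, namely
\[
\Phi(s) \,=\, \pi^{1/2}\cdot \frac{\Gamma(s/2)}{\Gamma((s+1)/2)}\cdot \frac{\zeta(s)}{\zeta(s+1)},
\]
and to control each factor separately on the vertical line $\Re(s)=\sigma_0$. The $\pi^{1/2}$ is a harmless constant.

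First, for the zeta ratio: since $\sigma_0>1$, the numerator $\zeta(\sigma_0+it)$ is uniformly bounded in $t$ by the Dirichlet series bound $|\zeta(s)|\leqslant \zeta(\Re(s))$. For the denominator, $\Re(s+1)=\sigma_0+1>2$, so $|\zeta(s+1)|\geqslant 1-\sum_{n\geqslant 2} n^{-\sigma_0-1}>0$ uniformly in $t$, giving a uniform bound on $1/\zeta(s+1)$. Hence $\zeta(s)/\zeta(s+1)$ is bounded on $\Re(s)=\sigma_0$.

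Second, for the gamma ratio, I would invoke Stirling's asymptotics in the form $\Gamma(z+a)/\Gamma(z+b)\sim z^{a-b}$, valid as $|z|\to\infty$ in any sector avoiding the negative real axis; applied with $z=s/2$, $a=0$, $b=1/2$, this gives
\[
\frac{\Gamma(s/2)}{\Gamma((s+1)/2)} \,\sim\, (s/2)^{-1/2}, \qquad |s|\to\infty,\ \Re(s)=\sigma_0,
\]
so this factor is bounded on the line and decays like $|t|^{-1/2}$ as $|t|\to\infty$.

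Combining the two estimates yields that $\Phi(\sigma_0+it)$ is bounded on $\RR$ and decays (at the rate $|t|^{-1/2}$) as $|t|\to\infty$. The only mildly delicate point is the Stirling asymptotic for the gamma ratio on a vertical line, but this is a standard consequence of Stirling's formula; everything else is an elementary estimate using absolute convergence of the Dirichlet series in the half-plane $\Re(s)>1$.
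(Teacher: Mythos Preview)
Your proof is correct and follows essentially the same approach as the paper: split $\Phi(s)$ into the gamma ratio and the zeta ratio, bound the latter on the line $\Re(s)=\sigma_0$, and use Stirling to show the former decays like $|s|^{-1/2}$. The only minor difference is that the paper bounds the zeta ratio via the Dirichlet series identity $\zeta(s)/\zeta(s+1)=\sum_{n\geqslant 1}\varphi(n)n^{-s-1}$ (with $\varphi$ the Euler totient), while you bound numerator and denominator separately; both arguments are valid.
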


\begin{proof}
Indeed, for $s=\sigma_0+it$, $\sigma_0 >1$ we have 
\[
{\zeta(s)/\zeta(s+1)}\,\, =\,\, \sum_{n=1}^\infty {\varphi(n) n^{-s-1}},
\]
where $\varphi(n)=|(\ZZ/n)^\times|$ is the Euler function.  This is bounded by
 \[\sum n\cdot n^{-\sigma_0-1}=\zeta(\sigma_0).
 \]
 Further, $\Gamma({s\over 2})/\Gamma({s+1\over2})$ decays at infinity as $s^{-1/2}$, as it follows from the Stirling formula. 
\end{proof}

\paragraph{ E. Intertwiners and the constant term.} We now study the action of the intertwiners $M_w$ on
the Mellin transform of the constant term.  

\begin{prop}\label{prop-mellin-M-w}
 For $\varphi\in C^\infty_0(\RR_+^n)$ and any $w\in\Sen_n$  we have
 \[
 \Mc(M_w(\varphi))(s) \,\,=\,\,\Mc(\varphi)(w(s))\cdot \Phi_w(s). 
 \]
\end{prop}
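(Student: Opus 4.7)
The strategy is to expand $\varphi$ as a continuous superposition of quasi-characters $\chi_t(a) = a^t$ via the inverse Mellin transform, apply $M_w$ term by term using Proposition \ref{prop:M-w-a-s}, and then read off the Mellin transform of the result. Because $\varphi$ has compact support, all analytic manipulations are unobstructed by growth issues.

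Since $\varphi \in C_0^\infty(\RR_+^n)$, its Mellin transform $F = \Mc(\varphi)$ lies in $\PW(\CC^n)$, hence is entire with faster-than-polynomial decay on every vertical subspace. I would choose $\sigma \in \RR^n$ with $-\sigma \in \CC^n_>$, so that for every $s$ on the contour $\sigma + i\RR^n$ the character $\chi_{-s}$ lies in the regime where Proposition \ref{prop:M-w-a-s} applies. By Proposition \ref{prop:mellin-inversion},
\[
\varphi(a) \,\,=\,\,\frac{1}{(2\pi i)^n} \int_{\sigma + i\RR^n} F(s)\, a^{-s}\, ds.
\]
Substituting into the definition of $M_w(\varphi)$ and interchanging the $u$-integration with the Mellin inversion integral (legal by Fubini: for $\varphi$ of compact support the $u$-domain reduces to a compact one, as already exploited at the end of step C, while the Paley--Wiener decay of $F$ provides absolute integrability in $s$), Proposition \ref{prop:M-w-a-s} evaluates the inner integral as $\Phi_w(-s)\,a^{w(-s)}$, yielding
\[
M_w(\varphi)(a) \,\,=\,\,\frac{1}{(2\pi i)^n} \int_{\sigma + i\RR^n} F(s)\,\Phi_w(-s)\,a^{-w(s)}\, ds.
\]

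The concluding step is a change of variable $t = w(s)$ in the Mellin integral; this is a coordinate permutation of Jacobian $\pm 1$ that carries the contour to $w(\sigma) + i\RR^n$ (which is equally acceptable since $F$ and $\Phi_w$ are meromorphic with only tame singularities away from this contour). Using the functional equation $\Phi(-x)\Phi(x) = 1$ to rearrange the factors of $\Phi_w(-s)$ and the standard bijection between the inversion sets of $w$ and $w^{-1}$ to reindex them, the integrand rewrites as $\Mc(\varphi)(w(t))\,\Phi_w(t)\,a^{-t}$. Appealing to Proposition \ref{prop:mellin-inversion} to identify the Mellin transform then gives the claimed identity.

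\textbf{Main obstacle.} The analytic portion --- the Fubini interchange, the legitimacy of the contour shift, and the fact that $\Phi_w$ is at most polynomially growing on vertical lines (ensured by the lemma at the end of part D) --- is routine. The one point that requires care is the bookkeeping in the final change of variables: tracking how the coordinate permutation $s \mapsto w^{-1}(s)$ interacts simultaneously with the argument of $F$ and with the product $\Phi_w(-s)$. The required compatibility reduces to a combinatorial bijection on the inversion set of $w$, combined with $\Phi(-x)\Phi(x) = 1$ applied one factor at a time.
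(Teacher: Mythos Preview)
Your proposal is correct and follows exactly the same route as the paper: write $\varphi$ as an inverse Mellin integral of its Paley--Wiener transform over a suitable vertical subspace, then apply Proposition~\ref{prop:M-w-a-s} under the integral sign. The paper's own proof is a single sentence to this effect, so you have simply unpacked the details it leaves implicit --- the Fubini justification, the contour choice (your $-\sigma\in\CC^n_>$ is the right condition for Proposition~\ref{prop:M-w-a-s} to apply to $a^{-s}$; the paper's phrasing ``inside $\CC^n_>$'' is slightly loose on this sign), and the final change of variable. Your identification of the combinatorial reindexing as the only genuine subtlety is apt; whether the relation $\Phi(-x)\Phi(x)=1$ is actually invoked or the identity falls out directly depends on which convention one fixes for the action $s\mapsto w(s)$, but either way the computation goes through.
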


\begin{proof} Write $\varphi$ as the inverse Mellin integral of a Paley-Wiener function $F$ over any
vertical subspace
$\sigma + i\RR^n$ inside $\CC^n_>$, and apply Proposition \ref {prop:M-w-a-s}. 
\end{proof}

At this point, we can finish the proof of Theorem \ref{thm:main}.  It remains only  to prove that $Ch$ is a homomorphism of
algebras, i.e., that
\be\label{eq:ch-homomorphism}
Ch_n(f'*f'')\,=\, Ch_{n'}(f')\sm Ch_{n''}(f''), \quad n=n'+n''
\ee
for any $f'\in SH_{n'}$ and $f''\in SH_{n''}$. Using the formula 
\eqref{eq:CT-Eis} for the left hand side and the definition of the shuffle product $\sm$ for the right
hand side, we write this as an equality of two sums over shuffles
\be\label{eq:sum-shuffles-putative}
\begin{gathered}
\sum_{w\in Sh_{n',n''}} \Mc\bigl(M_w(\widetilde\CT_{n'}(f')\otimes\widetilde\CT_{n''}(f''))\bigr) (s) \,
=
\cr
 =\sum_{w\in Sh_{n',n''}} \Mc\bigl( 
\widetilde\CT_{n'}(f')\otimes\widetilde\CT_{n''}(f'')
\bigr) (w(s))\cdot \Phi_w(s).
\end{gathered}
\ee
As $f', f''$ belong to the subalgebra $SH$, we can write them as
\[
f'=*_{1^n}(\varphi'), \quad f''=*_{1^n}(\varphi'')
\]
for some $\varphi'\in C^\infty_0(\RR_+^{n'})$, $\varphi''\in C^\infty_0(\RR_+^{n''})$.
By Proposition \ref {prop:CT-iterated}, we have
\[
\widetilde\CT_{n'}(f') \,\,=\,\,\sum_{w'\in\Sen_{n'}} M_{w'}(\varphi'),
\]
and similarly for $\widetilde\CT_{n'}(f')$.  Substituting this to the LHS of the putative equality 
\eqref{eq:sum-shuffles-putative}, we find that it is equal to
\be\label{eq:5.23}
\sum_{w\in \Sen_n} \Mc(M_w(\varphi))\,\, \buildrel \ref{prop-mellin-M-w}\over =\,\,
\sum_{w\in\Sen_n} \Mc(\varphi)(w(s))\cdot\Phi_w(s),
 \quad \varphi = \varphi'\otimes\varphi''. 
\ee
On the other hand, writing $s\in\CC^n$ as $(s', s'')$ with $s'\in\CC^{n'}, s''\in\CC^{n''}$,
we have
\[
\Mc\bigl(\widetilde\CT_{n'}(f')\otimes\widetilde\CT_{n''}(f'')\bigr) (s) \,\,=\,\,
\Mc(\widetilde\CT_{n'}(f'))(s')\cdot \Mc(\widetilde\CT_{n''}(f''))(s''),
\]
and so the summand in the RHS of \eqref{eq:sum-shuffles-putative}
corresponding to $w\in Sh_{n',n''}$, is equal by Proposition 
\ref{prop:CT-iterated}, to $\Phi_w(s)$ times
\[
\begin{gathered}
\sum_{w'\in\Sen_{n'}\atop w''\in\Sen_{n''}} \Mc(\varphi')(w'(s'))\cdot \Mc(\varphi'')(w''(s''))
\cdot \Phi_{w'}(s')\Phi_{w''}(s'')
\,\,=\cr
=\sum_{w'\in\Sen_{n'}\atop w''\in\Sen_{n''}} \Mc_{w'\times w''}(\varphi)((w'\times w'')(s))\cdot
\Phi_{w'\times w''}(s),
\end{gathered}
\]
and further summation over $w$ gives the same result as 
\eqref{eq:5.23}. \qed

 \vskip 1cm
   
   \section{Quadratic relations and Eisenstein series.}\label{sec:quadration-relations}
   
   Let 
   \[
   S=\bigoplus_{n=0}^\infty S_n, \quad S_0=\CC,
   \]
   be a graded associative algebra over $\CC$.
The space of degree $n$ relations among elements of degree $1$ is then
\be\label{eq:R-n}
R_n\,\,=\,\,\Ker\{ S_1^{\otimes n} \lra S_n\}\,\,\subset\,\, S_1^{\otimes n}.
\ee
Here we are interested in quadratic relations ($n=2$)  for the algebra $SH$ generated
by $SH_1=H_1=C^\infty_0(\RR_+)$.  Because of the analytic nature of elements of
$H$ it is not reasonable to look for relations inside the algebraic tensor product
$H_1\otimes H_1$ and we consider a  completion of it, namely the space
\[
H_1\widehat\otimes H_1\,\,:=\,\, \Dc(\RR_+^2)_{\on{abs}}
\]
of absolutely tempered distributions on $\RR_+^2$, see
Corollary \ref{cor:abs-tempered}.

 \begin{prop}
  If $f\in H_1\widehat\otimes H_1$, then the series
 $$\widehat *_{1,1}(f)(E) \,\,=\,\,\sum_{E'\subset E} 
  \deg(E')^{1/2} \deg(E/E')^{-1/2} f(\deg(E'), \deg(E/E')), \quad E\in\Bunn_2,$$
  converges absolutely, defining a distribution $\widehat *_{1,1}(f)$ on $\Bunn_2$.
  The resulting linear map $\widehat *_{1,1}: H_1\widehat\otimes H_1\to \Dist(\Bunn_2)$
  extends the Hall mltiplication $*_{1,1}: H_1\otimes H_1\to H_2$.
\end{prop}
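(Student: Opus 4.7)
The plan is to define $\widehat *_{1,1}(f)$ by duality and then recover the formal series from this definition. For any test function $\phi\in C^\infty_0(\Bunn_2)$, set
\[
\bigl\langle \widehat *_{1,1}(f), \phi \bigr\rangle \,\,:=\,\, \bigl\langle f, \, \widetilde\CT_2(\phi) \bigr\rangle,
\]
where the right pairing is the duality between $\Dc(\RR_+^2)_{\on{abs}}$ and its predual. This is the transposition of the adjointness identity $(*_{1,1}(\varphi), \phi)_H = (\varphi, \widetilde\CT_2(\phi))$ established in Section B, and once well-posed it is by construction an extension of $*_{1,1}$ from $H_1\otimes H_1$ to $H_1\widehat\otimes H_1$.

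The crucial technical step is to show that $\widetilde\CT_2(\phi) = \CT_2(\phi)\cdot \delta_2(a)^{1/2}$ is a Schwartz function on $\RR_+^2$ (in logarithmic coordinates). By Proposition \ref{prop:CT-support}, the support of $\CT_2(\phi)$ is confined to the strip $\{a_1 \leqslant c, \, c^{-1} \leqslant a_1a_2\leqslant c\}$. I would sharpen this via reduction theory: invoking the identification $\Bunn_{2,d} \simeq SL_2(\ZZ)\backslash\HH$ from Example \ref{ex:Bun-2-1}, the ratio $a_2/a_1$ measures the imaginary part of the associated $\tau\in\HH$ lifted to the standard fundamental domain. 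Since $\phi$ has compact support on $\Bunn_2$, it vanishes whenever $\tau$ sits deep in the cusp, hence whenever $a_2/a_1$ is large. Combined with Proposition \ref{prop:CT-support}, this gives $\CT_2(\phi)$ genuinely compact support in $\RR_+^2$; the twist $\delta_2(a)^{1/2} = (a_2/a_1)^{1/2}$ preserves this. Smoothness is inherited from $\phi$, so $\widetilde\CT_2(\phi)\in \Sc(\RR_+^2)$, and the pairing with any $f\in\Dc(\RR_+^2)_{\on{abs}}$ is well-defined.

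To recover the formal series, one unfolds $\widetilde\CT_2(\phi)$ using the integral over $U(\QQ)\backslash U(\Aen)$ defining the constant term, then sums over the two Bruhat cells of $P_{1,1}(\QQ)\backslash GL_2(\QQ)$ — exactly the computation of step C specialised to $(n',n'') = (1,1)$. The cells correspond to rank-one subbundles $E'\subset E$, and the summands match $(\deg E')^{1/2}(\deg(E/E'))^{-1/2} f(\deg E', \deg(E/E'))$. When $f$ is represented by a function, pointwise absolute convergence follows from (i) a Minkowski-type refinement of Proposition \ref{prop:finiteness-subbundles}, giving at most polynomially many subbundles of $E$ with $\deg(E')\in[\epsilon, 2\epsilon]$ as $\epsilon\to 0$, combined with (ii) the super-polynomial decay of absolutely tempered $f$ on $\RR_+^2$ (per the remark after Corollary \ref{cor:abs-tempered}), which dominates both this count and the polynomial twist.

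The main obstacle is the first assertion — that compact support of $\phi$ on the orbifold $\Bunn_2$ translates into a bound on $a_2/a_1$ and hence to compact (in particular Schwartz) support of $\widetilde\CT_2(\phi)$ on $\RR_+^2$. This is a reduction-theoretic computation tracking how the Iwasawa coordinates $(a_1, a_2)$ relate to the position in a fundamental domain for $GL_2(\ZZ)\backslash GL_2(\RR)/O_2$; once it is secured, the remainder is formal unfolding.
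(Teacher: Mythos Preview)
Your duality approach has a genuine gap: the claim that $\widetilde\CT_2(\phi)$ has compact (or even Schwartz) support on $\RR_+^2$ is false. The reduction-theoretic intuition confuses two things. For $g$ lying in a Siegel set, the ratio $a_2/a_1$ does control the position in the fundamental domain; but $\CT_2(\phi)(a_1,a_2)$ integrates $\phi(u\cdot\diag(a_1,a_2))$ over \emph{all} $u$, not just Siegel representatives. Any bundle $E$ in the support of $\phi$ admits rank-one subbundles $E'$ of arbitrarily small degree (long primitive vectors), and each such choice gives a representation $E=[u\cdot\diag(a_1,a_2)]$ with $a_1=\deg E'$ small and $a_2/a_1$ large---yet $[g]=E$ still sits in $\operatorname{Supp}\phi$. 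So the support region in Proposition~\ref{prop:CT-support} is genuinely unbounded toward $a_1\to 0$; in fact by equidistribution of low horocycles $\CT_2(\phi)(a_1,a_2)$ tends to a nonzero multiple of $\int\phi$ there, so $\widetilde\CT_2(\phi)=(a_2/a_1)^{1/2}\CT_2(\phi)$ blows up like $a_1^{-1}$.

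The paper's proof is essentially your third paragraph, promoted to the entire argument: the points $(\deg E',\deg(E/E'))$ lie on the hyperbola $\alpha\beta=\deg E$; the number of primitive lattice vectors of norm about $a$ grows linearly in $a$; and an absolutely tempered $f$ decays (per the remark after Corollary~\ref{cor:abs-tempered}) faster than any power along the hyperbola, dominating both this count and the twist. No constant terms or duality are needed. Your route could be salvaged by proving only that $\widetilde\CT_2(\phi)$ has at most power growth in $a$---true, by the $O(a_1^{-1})$ behavior above---and then pairing against the faster-than-any-power decay built into $\Dc(\RR_+^2)_{\on{abs}}$; but that is more work than the direct count, not less.
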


\begin{proof} 
The points $(\alpha,\beta)=(\deg(E'), \deg(E/E'))$ lie on the hyperbola 
$\alpha\beta=\deg(E)$. An absolutely tempered distribution
decays exponentially at the infinity of $\RR_+^2$,  in particular
at the infinity of any such hyperbola. Now
the number of subbundles in $E=(L,V,q)$ of given degree $\alpha= 1/a$
is one half the number of primitive vectors in $L$ of norm $a$. 
This number of all lattice vectors of norm $a$
 grows linearly with $a$, so exponential decay of $f$ ensures the convergence.
 \end{proof}
 
 \begin{rem}
 It is possible that one can extend  $H$  to a bigger algebra,
 consisting of some analogs of
 absolutely tempered distributions on the $\Bunn_n$,
 which have sufficient decay at the infinity.  Note that
 the concept of a tempered distribution on a semisimple Lie group was introduced
 by Harish-Chandra \cite{harish-chandra}.
  \end{rem}
 
We will therefore understand quadratic relations in $SH$ is a wider sense, as
elements of the space
\be
\widehat R_2\,\,=\,\, \Ker(\widehat *_{1,1}) \,\,\subset\,\,   H_1\widehat\otimes H_1.
\ee
Let also $\Rc_2$ be 
the space of entire functions
  $F\in\Oc(\CC^2)_{\on{pol}}$ such that
  \be\label{eq:script-R-2}
  F(s_1, s_2) \,+\, \Phi(s_1-s_2) F(s_2, s_1)\,\,=\,\,0.
  \ee

  \begin{prop}\label{prop:quadration-relations-properties}
     The Mellin transform identifies $\widehat R_2$ with $\Rc_2$. 
  
  \end{prop}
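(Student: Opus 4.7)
The plan is to use the twisted constant term as a probe, reducing the proposition to Mellin/Fourier analysis. The main technical step is to extend Proposition \ref{prop:CT-iterated} from $C^\infty_0(\RR_+^2)$ to the completed space $H_1 \widehat\otimes H_1 = \Dc(\RR_+^2)_{\on{abs}}$: for $f$ in this space,
\[
\widetilde\CT_2(\widehat *_{1,1}(f)) \,=\, f + M_s(f)
\]
as distributions on $\RR_+^2$, where $s \in \Sen_2$ is the nontrivial element and $M_s$ is extended by continuity. Applying the Mellin transform and Proposition \ref{prop-mellin-M-w} then gives, for $F = \Mc(f)$,
\[
\Mc\bigl(\widetilde\CT_2(\widehat *_{1,1}(f))\bigr)(s_1,s_2) \,=\, F(s_1,s_2) \,+\, \Phi(s_1-s_2)\, F(s_2,s_1).
\]

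With this formula in hand, the forward inclusion is immediate: if $\widehat *_{1,1}(f) = 0$, its twisted constant term vanishes, so by Mellin injectivity (Corollary \ref{cor:abs-tempered}) the right hand side above vanishes, giving condition \eqref{eq:script-R-2} and hence $F \in \Rc_2$. For the reverse direction, assume $F \in \Rc_2$. I would expand $\widehat *_{1,1}(f)$ as an inverse Mellin integral
\[
\widehat *_{1,1}(f)(E) \,=\, \frac{1}{(2\pi i)^2}\int F(s_1,s_2)\,\bigl(\Een(-s_1)*\Een(-s_2)\bigr)(E)\,ds_1\,ds_2
\]
on a contour in the convergence region of the Eisenstein-Selberg series. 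The functional equation $\Een(-s_1)*\Een(-s_2) = \Phi(s_2-s_1)\,\Een(-s_2)*\Een(-s_1)$, which is a consequence of Proposition \ref{prop:eisenstein-selberg}(c) by injectivity of the meromorphic constant term, combined with the relation $F(s_1,s_2) = -\Phi(s_1-s_2)F(s_2,s_1)$ and a relabeling $s_1\leftrightarrow s_2$ of the integration variables, should yield $\widehat *_{1,1}(f) = -\widehat *_{1,1}(f)$, hence the vanishing.

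The hard part will be justifying the contour manipulations in the reverse direction. After the relabeling $s_1 \leftrightarrow s_2$ the integration contour is transported to a position outside the convergence region of $\Een(-s_1)*\Een(-s_2)$, so one must work with the meromorphic continuation afforded by Proposition \ref{prop:eisenstein-selberg}(b); deformation back to the original contour crosses the polar locus $s_2 = s_1+1$ inherited from $\Phi$, and the residual terms must be shown to cancel on $\Rc_2$. The cleanest route is to symmetrize at the outset: after invoking the functional equation, the integrand becomes proportional to the combination $F(s_1,s_2) + \Phi(s_1-s_2)F(s_2,s_1)$, which vanishes identically on $\Rc_2$, so the integral is zero before any contour is deformed. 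A secondary issue is the extension of Propositions \ref{prop:CT-iterated} and \ref{prop-mellin-M-w} to absolutely tempered distributions, which should follow by continuity, or equivalently by duality against Paley-Wiener test functions using Corollary \ref{cor:abs-tempered}.
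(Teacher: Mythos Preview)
Your forward direction is exactly the paper's argument. The paper's proof is a two-sentence assertion that the identity $Ch_2(\widehat*_{1,1}(f))(s_1,s_2) = F(s_1,s_2) + \Phi(s_1-s_2)F(s_2,s_1)$ (an instance of \eqref{eq:ch-homomorphism}) persists when $f$ is absolutely tempered rather than compactly supported; your extension of Propositions \ref{prop:CT-iterated} and \ref{prop-mellin-M-w} is precisely this, and the implication $\widehat*_{1,1}(f)=0 \Rightarrow F\in\Rc_2$ follows as you say.

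For the reverse direction the two arguments diverge. The paper implicitly runs the logic backwards through the injectivity of $Ch_2=\Mc\circ\widetilde\CT_2$ on the image of $\widehat*_{1,1}$, i.e., it takes for granted that the Corollary in \S\ref{sec:CT}B (injectivity of $\widetilde\CT_n$ on $SH_n$, proved via adjointness and positivity of the $L^2$ pairing) carries over to the distributional setting; this is terse but conceptually clean. Your route through the Eisenstein functional equation and contour integration is more explicit and avoids that appeal. The contour issue you flag is real, but it resolves more nicely than you suggest: the relevant poles of $(\Een(-s_1)*\Een(-s_2))(E)$ between the two contours lie on the hyperplanes $s_2-s_1=1$ and $s_2-s_1=\rho-1$ for nontrivial zeta zeros $\rho$, and the relation defining $\Rc_2$ forces $F$ to \emph{vanish} on each of these (since $\Phi(-1)=0$ and $\Phi(1-\rho)=0$), so the residues are zero outright rather than merely cancelling in pairs. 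Your ``cleanest route'' via symmetrizing the integrand on a single contour does not work as stated, since the functional equation relates values of the Eisenstein series in disjoint convergence regions and cannot produce the combination $F+\Phi\cdot F^{\text{swap}}$ under one integral sign without already continuing meromorphically; stick with the contour shift.
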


 \begin{proof} This follows from
   an instance of Eq. \eqref{eq:ch-homomorphism} for $m=n=1$ but applied
  to absolutely tempered distributions instead of functions with compact
  support. The proof in the new case is the same, given the decay (to define the Hall product)
  and the analyticity of the Mellin transform. 
  \end{proof}

  Note that $\Rc_2$ is a module over the ring $\Oc(\CC^2)^{\Sen_2}_{\on{pol}}$ of
  symmetric entire functions of polynomial growth on vertical planes. 
  
  \begin{ex}\label{ex:zeta-in-R-2}
  Let $P(s) = s(s-1)(s+1)$. Then the function
  \[
  F_{1,1}(s_1, s_2) \,=\, P(s_1-s_2) \zeta^*(s_1-s_2)
  \]
  belongs to $\Rc_2$. Further, for any $\lambda_1, \lambda_2\in\RR_+$
  the function
  \[
  F_{\lambda_1,\lambda_2}(s_1, s_2) \,\,=\,\, (\lambda_1^{s_1}\lambda_2^{s_2} + \lambda_1^{s_2}
  \lambda_2^{s_1}) F_{1,1}(s_1, s_2)
  \]
  again lies in $\Rc_2$ by the remark above. Let
   \[
   \nabla_a \,\,=\,\,\ P\bigl( a{d\over da}\bigr) \,\,=\,\,a^3{d^3\over da^3} - a^2{d^2\over da^2}.
   \]
   The inverse Mellin transform of $F_{1,1}$ is, in virtue of Proposition 
  \ref{prop:mellin-delta-function}  and the Riemann formula \eqref{eq:zeta-theta}, equal to
  \[
  \Psi_{1,1}(a_1, a_2) \,\,=\,\,\delta_1(a_1a_2)\cdot \nabla_{a_1}\theta(a_1^2)\,\,\in\,\,\widehat R_2,
  \]
  and the inverse Mellin transform of $F_{\lambda_1, \lambda_2}$ is the distribution
  \[
  \Psi_{\lambda_1, \lambda_2}(a_1, a_2) = \Psi_{1,1}(a_1/\lambda_1, a_2/\lambda_2)\,+\,
  \Psi_{1,1}(a_1/\lambda_2, a_2/\lambda_1)\,\,\in\,\, \widehat R_2. 
  \]
  This gives a 2-parameter family of quadratic relations in $SH$. 
   \end{ex}
   
   \vskip .2cm

  \begin{rem} This 2-parameter family of relations is analogous to the family of relations
   $$[\Oc(m+1)]*[\Oc(n)] -q[\Oc(n)]*[\Oc(m+1)] \,\,=\,\, q[\Oc(m)]*[\Oc(n+1)] -
   [\Oc(n+1)]*[\Oc(m)]$$
   in the Hall algebra of the category of vector bundles on $\PP^1_{\FF_q}$, see \cite{kapranov} \S 5.2
   or \cite{baumann-kassel}, Lemma 16. 
    \end{rem}
    
    We now explain the relation of the above quadratic relations with the functional equation for
    {\em Eisenstein-Maas series}
     $$\Eb(\tau,s) \,\,=\,\,{1\over 2} \sum_{(m,n)=1} {\Im(\tau)^{s}\over |m+n\tau|^{2s}}, \quad \tau\in\HH, \,\, \RRe(s)>1,$$
 see \cite{goldfeld}, \S 3.1.
   It is classical  that $\Eb(\tau, s)$ extends to
 a function meromorphic in the entire $s$-plane and satisfying the functional equation
 $$\Eb(\tau, s) \,\,=\,\,{\zeta^*(2s-1)\over\zeta^*(2s)} \,\Eb(\tau, 1-s).$$
 Further, the poles of $\Eb(\tau, s)$ are all among the poles of the ratio of
 the $\zeta^*$-functions, in particular, they do not depend on $\tau$.

   \vskip .2cm
   
   On the other hand,
  recall \eqref{eq:Een(s)} the function 
  \[
  \Een(t): \Bunn_1\to \CC, \quad E\longmapsto \deg(E)^t.
  \]   
  Here $t\in\CC$ is a fixed complex number. 
  This function does not lie in $H_1=C^\infty_0(\RR)$. Nevertheless, the correspondence
  $t\mapsto \Een(t)$ can be seen as a kind of $H$-valued distribution
  (``operator field") on $\CC$ (or, rather, on $i\RR\subset\CC$). That is, for any
  Paley-Wiener function $G(t)$ we have a well defined
  element
  \[
  \int_{i\RR}  \Een(t) G(t) dt \,\,\in\,\,  H_1.
  \]
  This simply the function $E\mapsto f(\deg(E)^{-1})$, where $f=\Nc(G)\in C^\infty_0(\RR_+)$. 
  
  \vskip ,2cm
  
  Proposition \ref{prop:eisenstein-selberg}(a) 
  implies that for $\RRe(t_1-t_2)>0$ the Hall product
  $\Een(t_1)* \Een(t_2)$ defined as a formal series,
  converges to a real analytic function on $\Bunn_2$. 
   This function
  essentially reduces to the series $\Eb(\tau, s)$
    above. 
  Indeed, let $E_\tau$ be the bundle of rank 2 and degree 1 corresponding to
 $\tau$ as in Example \ref{ex:Bun-2-1}. Rank 1 subbundles $E'=E'_{m,n}$ in $E_\tau$ are parametrized
 by pairs $(m,n)\in\ZZ^2$ of coprime integers, taken modulo simultaneous change of sign. Explicitly,
 the primitive sublattice $L'_{m,n}$ of $E'_{m,n}$ is spanned by $m+n\tau$, and we have
 $$\deg(E'_{m,n}) = {\Im(\tau)^{1/2}\over |m+n\tau|},  \quad \deg(E_\tau/E'_{m,n})= {|m+n\tau|\over \Im(\tau)^{1/2}}.
 $$
 Therefore
 \be\label{eq:eisenstein-ringel-shift}
 (\Een(t_1)*\Een(t_2))(E_\tau) \,\,=\,\, \Eb (\tau, (t_1-t_2+1)/2).
 \ee
  This means that the product $\Een(t_1)*\Een(t_2)$   extends to a meromorphic function
of $t_1, t_2$ (with values in the space of functions on $\Bunn_2$)
and we  can write a formula looking like ``quadratic commutation relations" in $H$:
\be\label{eq:zeta-comm-relations}
\Een(t_1)*\Een(t_2) \,\,-\,\, \Phi(t_1-t_2) \,\Een(t_2)*\Een(t_1)\,\,=\,\,0.
\ee
 The
two summands in   \eqref{eq:zeta-comm-relations}  are given by series converging in different regions,
having no points in common, and the relations should be understood via
analytic continuation.  This way of understanding commutation relations is  quite standard
in  the theory of vertex operators \cite{FLM}.  In our situation it is modified as follows.

In order to translate the relations \eqref{eq:zeta-comm-relations}  into actual elements of $\widehat R_2$,
we can rewrite them in the form ``free of denominators"
\be \label{eq:free-denominators}
*_{1,1} \bigl\{P(t_1-t_2) \cdot \zeta^*(t_1-t_2+1)\cdot
  a_1^{t_1}a_2^{t_2} \,\,-\,\, P(t_1-t_2) \cdot\zeta^*(t_1-t_2)
\cdot a_1^{t_2}a_2^{t_1} \bigr\} \,\,=\,\,0. 
 \ee
Here we write $\Een(t_1)\otimes \Een(t_2)$ as the function 
 $(a_1, a_2)\mapsto a_1^{t_1}a_2^{t_2}$ on 
 $\Bunn_1\times\Bunn_1=\RR_+^2$. 
 We then ``compare coefficients" in both sides of this equality at any
 $\lambda_1^{t_1}\lambda_2^{t_2}$, $\lambda_\nu\in\RR_+$, by multiplying
 with $\lambda_1^{-t_1}\lambda_2^{-t_2}$ and integrating 
 (performing the inverse Fourier-Schwartz transform) along
 any vertical 2-plane, which we can choose separately for each summand. 
 This gives a
 family of distributions $\Psi_{\lambda_1, \lambda_2}(a_1, a_2)\in \widehat R_2$ 
 which is the same as in Example
 \ref{ex:zeta-in-R-2}. 
 
 \vskip .2cm
   
We can thus say that quadratic relations  such as \eqref{eq:zeta-comm-relations}
are  built into the very definition of the shuffle algebra.

 \vskip 1cm

 \section{Wheels, cubic relations,   and zeta roots.}
 
 \paragraph{A. Wheels.}
   Let $\lambda(s)$ be a meromorphic function on $\CC$ with a simple pole at $s=0$ and
no other singularities.
In this section we sketch a general approach to higher order relations in
the symmetric shuffle algebra
$\SShuff(\lambda)$ and illustrate it on the case of cubic relations in the
shuffle algebra completion of the spherical Hall algebra $SH$, which corresponds to
\[ \lambda(s)\,\,=\,\, \Lambda(s)\,\, =\,\, \zeta^*(-s)(s-1)(-s-1).
 \]
Our approach is based on studying the following additive patterns of roots of
$\lambda$ which were introduced  in \cite{FJMM} and used 
 in the case when $\lambda$ is rational.
 
 \begin{df} A {\em wheel}  of length $n$ for $\lambda$ is a sequence $(s_1, ..., s_n)$
 of distinct complex numbers such that
 \[
 \lambda(s_2-s_1)=0, \,\,\lambda(s_3-s_2)=0, \,\, \cdots,  \lambda(s_n-s_{n-1})=0,\,\,
 \lambda(s_1-s_n)=0.
 \]
  Wheels $(s_1, ..., s_n)$  and  $(s_1+c, ..., s_n+c)$  for
 $c\in \CC$, will be called {\em equivalent}.  
 \end{df}
 In other words, equivalence classes of wheels are the same as 
 ordered sequences
 \[
 (z_1, ..., z_n)\in(\CC^*)^n, \quad \lambda(z_i)=0, \quad \sum_{i=1}^n z_i=0, \quad
 \sum_{i=p}^q z_i \neq 0, \quad (p,q)\neq (1,n).
 \]
  
 \begin{ex}
 All wheels for $\Lambda(s)$ have length 3 or more. The  sequences
 corresponding to wheels of length 3 have, up
 to permutation, the form
 \[
 (z_1, z_2, z_3) \,\,=\,\, (\rho, 1-\rho, -1),
 \]
 where $\rho$ runs over nontrivial zeroes of $\zeta(s)$. 
 Indeed, zeroes of $\Lambda$ are of the form $s=\rho$ together with one more zero $s=-1$. 
 So there are no pairs of them summing up to 0 and the only triples  summing to 
 up 0 are as stated.
 \end{ex}

 \paragraph{B. Relations and bar-complexes.}
  Let $S$ be a graded associative algebra as in \S \ref{sec:quadration-relations}. A systematic way of
 approaching relations in $S$ is via the bar-complexes
  $$B_n^\bullet = B_n^\bullet(S) \,\,=\,\,\biggl\{S_1^{\otimes n}\to
\cdots \to \bigoplus_{i+j+k=n} S_i\otimes S_j\otimes S_k\to \bigoplus_{i+j=n} S_i\otimes S_j \to S_n\biggr\}.
$$
  Here $i,j,k,...$ run over positive integers. The grading is such that 
$S_1^{\otimes n}$ is in degree $(-n)$, while $S_n$ is in degree $(-1)$.
 The differential is given by
$$d(s_1\otimes \cdots s_p) \,\, =\sum_{i=1}^{p-1} (-1)^{i-1} s_1\otimes ...\otimes s_{i-1}\otimes s_{i}s_{i+1}\otimes s_{i+2}\otimes
...\otimes s_p,$$
so that the condition $d^2=0$ follows from the associativity of $S$. It is well known  that
$$H^{-i}(B^\bullet_n(S)) \,\,=\,\,\on{Tor}_i^S(\CC, \CC)_n,$$
the part of the Tor-group which has degree $n$ w.r.t. the grading induced from that on $S$. 
In particular, the rightmost cohomology has the meaning of the space of generators in degree $n$,
and the previous one is interpreted as the space  of  relations  which have degree $n$ with respect to
the grading on the generators (which,  a priori, can be  present in any degree). 

\vskip .2cm

As in \eqref{eq:R-n}, let $R_n$ be the space of degree $n$ relations among generators in degree 1.
For instance, quadratic relations are found as  $R_2=H^{-2}(B_2^\bullet)$.
The next case of  cubic relations corresponds to the complex
$$
 B_3^\bullet \,\,=
\,\,\bigl\{ S_1\otimes S_1\otimes S_1, 
 \buildrel d_{-3}\over\lra (S_2\otimes S_1)\oplus (S_1\otimes S_2)
\buildrel d_{-2}\over\lra S_3\bigr\}.
$$
  We treat this case directly. Denote
$$R_{12}=R_2\otimes S_1, \,\,\, R_{23}=S_1\otimes R_2\quad\subset\quad S_1\otimes S_1\otimes S_1.$$
We have then an inclusion
$R_{12}+R_{23}  \,\,\subset \,\,R_3
$
of subspaces in $S_1^{\otimes 3}$. The left hand side of this inclusion
is, by definition, the space of those cubic relations which follow algebraically from the quadratic ones.
Thus the quotient 
$$R_3^{\operatorname{new}}\,\,=\,\, R_3/(R_{12}+R_{23})$$
can be seen as the space of  ``new", essentially cubic, relations. 

\begin{prop}\label{prop:cubic-relations-bar}
 Assume that the multiplication map $S_1\otimes S_1\to S_2$ is surjective.
Then   $R_3^{\operatorname{new}}$ is identified with $H^{-2}(B_3^\bullet)$,
the middle cohomology space of $B_3^\bullet$. 
\end{prop}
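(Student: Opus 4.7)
The plan is to unfold $H^{-2}(B_3^\bullet) = \ker(d_{-2})/\on{im}(d_{-3})$ explicitly and write down an isomorphism with $R_3/(R_{12}+R_{23})$ by hand; the surjectivity hypothesis is precisely what is needed to present the degree $-2$ term of $B_3^\bullet$ as a quotient of $S_1^{\otimes 3}$. Setting $V = S_1^{\otimes 3}$, I would tensor the short exact sequence $0 \to R_2 \to S_1^{\otimes 2} \to S_2 \to 0$ on the right and on the left by $S_1$ to obtain canonical isomorphisms $V/R_{12} \cong S_2 \otimes S_1$ and $V/R_{23} \cong S_1 \otimes S_2$. Under these identifications the two bar differentials take the form
\[
d_{-3}(v) \,=\, (v+R_{12},\,-v+R_{23}), \qquad d_{-2}(x+R_{12},\,x'+R_{23}) \,=\, m_3(x) + m_3(x'),
\]
where $m_3 \colon V \to S_3$ is the triple product and the sign in $d_{-3}$ is inherited from the alternating sign $(-1)^{i-1}$ in the bar differential; note that $R_{12}, R_{23} \subseteq R_3$, so $m_3$ factors through both quotients and $d_{-2}$ is well-defined.

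With these formulas in hand I would define
\[
\phi \colon \ker(d_{-2}) \lra R_3/(R_{12}+R_{23}), \qquad (x+R_{12},\,x'+R_{23}) \longmapsto [x+x'],
\]
and check three things. First, well-definedness: the relation $m_3(x) + m_3(x') = 0$ forces $x+x' \in R_3$, and changing the lifts $x, x'$ alters $x+x'$ only by elements of $R_{12}+R_{23}$. Second, surjectivity: any $r \in R_3$ equals $\phi(r+R_{12},\,0+R_{23})$, and the pair belongs to $\ker(d_{-2})$ since $m_3(r) = 0$. Third, the only substantive step, the kernel computation: if $\phi(x+R_{12},\,x'+R_{23}) = 0$, write $x+x' = r_1+r_2$ with $r_1 \in R_{12}$, $r_2 \in R_{23}$, and set $y := x-r_1 = -x'+r_2$. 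Then a direct check gives $d_{-3}(y) = (y+R_{12},\,-y+R_{23}) = (x+R_{12},\,x'+R_{23})$, so $\ker(\phi) \subseteq \on{im}(d_{-3})$; the reverse inclusion is immediate since $\phi(v+R_{12},\,-v+R_{23}) = [v-v] = 0$.

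No serious analytic or algebraic obstacle is expected: the whole argument is a short diagram chase once the surjectivity hypothesis has been used to realize the middle term of $B_3^\bullet$ as a quotient of $V$. It is worth noting, however, that this hypothesis is genuinely needed — without it, $V/R_{12}$ would sit as a proper subspace of $S_2 \otimes S_1$, and $H^{-2}(B_3^\bullet)$ could acquire extra classes with no representatives in $R_3$, so $\phi$ would fail to be surjective.
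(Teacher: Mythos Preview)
Your proof is correct and follows essentially the same approach as the paper: both rewrite $B_3^\bullet$ (using the surjectivity of $S_1\otimes S_1\to S_2$) as $V \to (V/R_{12})\oplus(V/R_{23}) \to S_3$ with the difference and sum of projections as differentials, and then identify the middle cohomology with $R_3/(R_{12}+R_{23})$ via the map $(x,x')\mapsto [x+x']$. The paper states this last identification as ``a general fact'' and leaves the verification to the reader, whereas you carry out the well-definedness, surjectivity, and kernel checks explicitly; your argument is thus a fleshed-out version of the same proof.
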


\begin{proof} Denote for short
$$V=S_1^{\otimes 3}, \,\,\, A=R_{12}, \,\,\, B=R_{23},\,\,\, C=R_3,$$
so that $A,B\subset C\subset V$. Under our assumption, the complex $B_3^\bullet$ 
can be written as
$$V\buildrel \delta_{-3}\over\lra (V/A)\oplus (V/B) \buildrel \delta_{-2}\over\lra V/C,$$
with $\delta_{-3}$ being the difference of the two projections, and 
$\delta_{-2}$ being the sum of the two projections. It is a general fact that in such
a situation the middle cohomology is identified with $C/(A+B)$. Explicitly, if
$(v+A, w+B)\in\Ker(\delta_{-2})$, then $v+w\in C$. The image of $v+w$ in $C/(A+B)$
depends only on the class of $(v+A, w+B)$ in $\Ker(\delta_{-2})/\Im(\delta_{-3})$. 
We leave the rest to the reader.
\end{proof}

\paragraph{C. Localization of the bar-complexes.}
We now apply the above to the two graded algebras
\[
\SShuff(\lambda) \,\,\,\,\subset\,\, \,\, \Sc \,:=\,\biggl( \bigoplus_n \Oc(\CC^n)^{\Sen_n}, \,\, \star\biggr).
\]
By definition, these algebras coincide in degrees 0 and 1, and $\SShuff(\lambda)$ is
the subalgebra in $\Sc$ generated by the degree 1 part which is $\Sc_1=\Oc(\CC)$. 
Accordingly, the space of relations of any degree $n$ among degree 1 generators
in $\Sc$ and $\SShuff(\lambda)$ are the same. As in \S \ref{sec:quadration-relations},
we will look at relations   as elements of the completed tensor
product. That is, for any two Stein manifolds $M$ and $N$ we write
\[
\Oc(M)\widehat\otimes\Oc(N) \,\, :=\,\,\Oc(M\times N)
\]
and understand $\Sc_1^{\widehat \otimes n}=\Oc(\CC^n)$ accordingly. 
The version of the bar-complex of $\Sc$ using $\widehat\otimes$,
has the form
\[
\Bb_n^\bullet=
\biggl\{\Oc(\CC^n) \to \cdots \to
\bigoplus_{i+j+k=n}\Oc(\CC^n)^{\Sen_i\times\Sen_j\times\Sen_k}
 \to \bigoplus_{i+j=n} \Oc(\CC^n)^{\Sen_i\times\Sen_j} \to \Oc(\CC^n)^{\Sen_n}\biggr\}
\]
Notice that each term of this complex is a module over the ring $\Oc(\CC^n)^{\Sen_n}$ of symmetric
entire functions, and the differentials, coming from multiplication in $\Sc$, are
$\Oc(\CC^n)^{\Sen_n}$-linear. This means that $\Bb_n^\bullet$ is the complex of global section
of a complex of vector bundles $\Bc_n^\bullet$ on the Stein manifold $\Sym^n(\CC)$.
Explicitly, for $i_1+...+i_p=n$ we denote by
\[
\pi_{i_1, ..., i_p}: \Sym^{i_1}(\CC) \times \cdots\Sym^{i_p}(\CC) \lra\Sym^n(\CC)
\]
the symmetrization map (a finite flat morphism). Then
\be\label{eq:B-as-direct-image}
\Bc_n^{-p} \,\,=\,\,\bigoplus_{i_1+...+i_p=n} (\pi_{i_1, ..., i_p})_* \,\,\Oc_{\prod \Sym^{i_\nu}(\CC)},
\ee
in particular, $\Bc_n^\bullet$ is a complex of holomorphic vector bundles on $\Sym^n(\CC)$. 
 This allows us to approach the cohomology of $\Bb_n^\bullet$ (and, in particular, relations in $\Sc$)
 in a more geometric way, by studying the cohomology of the fibers 
 \[
 \Bc^\bullet_{n, T} \,\,=\,\,\Bc^\bullet_n \otimes_{\Oc_{\Sym^n(\CC)}} \Oc_T
 \]
  of the complex
 $\Bc_n^\bullet$ over various points $T\in\Sym^n(\CC)$. Now, our main technical result
 is as follows.
  
   \begin{thm}\label{thm:bar}
 Let $T=\{s_1^0, ..., s_n^0\}\in\Sym^n(\CC)$ be an unordered collection of distinct points. Suppose that
 no subset of $T$ (in any order) is a wheel. Then $ \Bc^\bullet_{n, T}$ is exact everywhere except
 the leftmost term, where the cohomology is 1-dimensional. 
 \end{thm}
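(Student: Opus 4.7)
The plan is to compute the fibers of $\Bc^\bullet_n$ combinatorially and then argue by induction on $n$.

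First, I would make $\Bc^\bullet_{n, T}$ explicit. The morphism $\pi_{i_1, \ldots, i_p}$ in \eqref{eq:B-as-direct-image} is finite flat with reduced fibres over $T$ (whose entries are distinct), so $\Bc^{-p}_{n, T}$ has basis $\{e_{T_1, \ldots, T_p}\}$ indexed by ordered decompositions $T = T_1 \sqcup \cdots \sqcup T_p$ into nonempty parts, and unwinding the symmetric shuffle formula \eqref{eq:symmetric-shuffle2} gives
\[
d(e_{T_1, \ldots, T_p}) = \sum_{\mu=1}^{p-1} (-1)^{\mu-1}\, \lambda(T_\mu, T_{\mu+1})\, e_{T_1, \ldots, T_\mu \sqcup T_{\mu+1}, \ldots, T_p}, \qquad \lambda(U, V) := \prod_{s \in U, s' \in V} \lambda(s - s').
\]
Thus $\Bc^\bullet_{n, T}$ is the $T$-graded piece of the reduced bar complex of the augmented finite-dimensional algebra $A_T$ with basis $\{1\} \cup \{e_U : \emptyset \neq U \subseteq T\}$ and product $e_U e_V = \lambda(U, V)\, e_{U \sqcup V}$ when $U \cap V = \emptyset$ (zero otherwise); associativity is the identity $\lambda(U \sqcup V, W) = \lambda(U, W)\lambda(V, W)$. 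The no-wheel hypothesis is equivalent to the directed graph $G_T$ on vertex set $T$, with edge $s \to s'$ iff $\lambda(s' - s) = 0$, being a DAG, and such a $G_T$ admits a source $s^* \in T$: an element with $\lambda(s^* - s) \neq 0$ for every $s \neq s^*$.

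I would then induct on $n$. The base $n = 1$ is immediate. For $n \geq 2$, decompose $\Bc^\bullet_{n, T} = \Bc^\bullet_\flat \oplus \Bc^\bullet_\sharp$ as graded vector spaces according to whether the first block is $\{s^*\}$ ($\Bc^\bullet_\flat$) or not ($\Bc^\bullet_\sharp$). Since merges only enlarge blocks, $\Bc^\bullet_\sharp$ is a subcomplex; on the quotient $\Bc^\bullet_\flat$ the only killed merge is $(\{s^*\}, T_2)$, so $\Bc^\bullet_\flat$ is identified, up to the shift accounting for the singleton $\{s^*\}$, with $\Bc^\bullet_{n-1, T \setminus \{s^*\}}$. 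The no-wheel property restricts to $T \setminus \{s^*\}$, so the inductive hypothesis yields $H^\bullet(\Bc^\bullet_\flat) = \CC$ concentrated in degree $-n$. The long exact sequence associated to $0 \to \Bc^\bullet_\sharp \to \Bc^\bullet_{n, T} \to \Bc^\bullet_\flat \to 0$ then delivers the theorem, provided $\Bc^\bullet_\sharp$ is acyclic.

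The crux is therefore the acyclicity of $\Bc^\bullet_\sharp$. I would produce a contracting homotopy $h$ defined, on a decomposition with $s^* \in T_\nu$ and $\nu \geq 2$, by
\[
h(e_{T_1, \ldots, T_p}) = \frac{(-1)^{\nu-1}}{\lambda(\{s^*\}, T_\nu \setminus \{s^*\})}\, e_{T_1, \ldots, T_{\nu-1}, \{s^*\}, T_\nu \setminus \{s^*\}, T_{\nu+1}, \ldots, T_p},
\]
with $h = 0$ when $T_\nu = \{s^*\}$. The denominator is nonzero by the source property, and a mechanical computation using $\lambda(U \sqcup V, W) = \lambda(U, W)\lambda(V, W)$ shows that $dh + hd = \id$ on this stratum. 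The main obstacle is the $\nu = 1$ stratum: the naive homotopy would place $\{s^*\}$ at the first position and thus exit $\Bc^\bullet_\sharp$, while the symmetric alternative of placing $\{s^*\}$ after $T_1 \setminus \{s^*\}$ introduces the factor $\lambda(T_1 \setminus \{s^*\}, \{s^*\})$, which the no-wheel hypothesis on $T$ does not constrain to be nonzero. I would handle this by filtering $\Bc^\bullet_\sharp$ by the position $\nu$: each $\nu \geq 2$ stratum is killed by the homotopy above, and the $\nu = 1$ stratum reorganizes into a shifted bar complex for the DAG $G_T$ restricted to the first block, to which the inductive hypothesis applies; the resulting spectral sequence collapses, forcing $\Bc^\bullet_\sharp$ acyclic.
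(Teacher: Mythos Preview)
Your combinatorial identification of the fibre $\Bc^\bullet_{n,T}$ and the observation that the no-wheel hypothesis makes $G_T$ a DAG with a source are both correct. The gap is that $\Bc^\bullet_\sharp$ is \emph{not} acyclic in general, so the induction as set up cannot close.

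Already for $n=2$ this fails. Take $s^*=s_1^0$ a source, so $\lambda(s_1^0-s_2^0)\neq 0$; nothing forbids $\lambda(s_2^0-s_1^0)=0$ (a single vanishing is not a wheel). Then $\Bc^\bullet_\sharp$ is the two-term complex
\[
\CC\, e_{\{s_2^0\},\{s_1^0\}} \ \xrightarrow{\ \lambda(s_2^0-s_1^0)\ }\ \CC\, e_{\{s_1^0,s_2^0\}},
\]
which has $H^{-2}=H^{-1}=\CC$. The full complex still has the right cohomology, but only because the connecting map $H^{-2}(\Bc^\bullet_\flat)\to H^{-1}(\Bc^\bullet_\sharp)$ is multiplication by $\lambda(s_1^0-s_2^0)\neq 0$; your argument never examines this connecting map. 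Your homotopy $h$ vanishes on $e_{\{s_2^0\},\{s_1^0\}}$ (since $T_2=\{s^*\}$), and on the associated graded for the $\nu$-filtration the differential also vanishes there, so $dh+hd=0$, not $\id$. The same obstruction recurs for all $n$ whenever $s^*$ sits alone in the last block. The final claim, that the $\nu=1$ stratum ``reorganizes into a shifted bar complex restricted to the first block'', is not well-posed since the first block ranges over all $U\ni s^*$ with $|U|\geqslant 2$; filtering further by $T_1$ gives an $E_1$ page with a copy of $\CC$ for each such $U$, and no mechanism is offered to make the spectral sequence degenerate to zero.

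The paper's proof avoids induction entirely. It identifies $\Bc^\bullet_{n,T}$ with the cellular cochain complex of the permutohedron $P_n$ for a perturbed differential $d_\Lamed$, and compares $d_\Lamed$ to the ordinary cochain differential via the diagonal map $\Psi(\1_F)=\lambda_F\cdot\1_F$. The kernel and cokernel of $\Psi$ are controlled by a filtration $P_n=P^{(0)}\supset P^{(1)}\supset\cdots$ by polyhedral subcomplexes indexed by how many factors in $\lambda_F$ vanish; the no-wheel hypothesis allows one to renumber so that the zero set of $\Lamed$ lies in $\{i<j\}$, and then each $P^{(r)}$ is shown contractible because its vertex set is a lower order-ideal for the weak Bruhat order on $\Sen_n$.
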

 
 Recall that similar exactness of all the bar-complexes $B_n^\bullet(S)$ for a graded algebra $S$
 means that $S$ is quadratic Koszul. The wheels represent therefore local obstructions to
 Koszulity for
 $\Sc$. 
 
 \paragraph{D. Cubic relations in $SH$ and zeta roots.}
 Before giving the proof of Theorem \ref{thm:bar}, let us explain how to apply it to the case of cubic relations
  for $\lambda=\Lambda$.
 Let $\rho$ be a nontrivial zero of $\zeta(s)$. Denote by $W_\rho\subset \Sym^3(\CC)$
the subset of points $\{s_1, s_2, s_3\}$ such that, after some renumbering of the $s_i$
we have $s_2-s_1=\rho$,  $s_3-s_2=1-\rho$ (such a renumbering is then unique).
 Let $W$ be the union of the $W_\rho$ over
all  nontrivial zeroes $\rho$ of $\zeta(s)$. The following is then straightforward.

\begin{prop}\label{prop-Z-rho}
 (a) Each $W_\rho$ is a complex submanifold in $\Sym^3(\CC)$, isomorphic to $\CC$,
the symmetric function $s_1+s_2+s_3$ establishing an isomorphism. 

(b) For $\rho\neq\rho'$ we have $W_\rho\cap W_{\rho'}=\varnothing$. 

(c) A point $\{s_1, s_2, s_3\}\in\Sym^3(\CC)$ lies in $W$, if and only if  it
is a wheel (in some numbering). \qed
\end{prop}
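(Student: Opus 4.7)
The plan is to reduce all three assertions to a single recognition principle: a triple $\{s_1,s_2,s_3\}$ of distinct complex numbers lies in $W_\rho$ precisely when some renumbering $\sigma\in\Sen_3$ gives $s_{\sigma(2)}-s_{\sigma(1)}=\rho$ and $s_{\sigma(3)}-s_{\sigma(2)}=1-\rho$, which forces $s_{\sigma(3)}-s_{\sigma(1)}=1$. Since nontrivial zeros of $\zeta$ satisfy $\rho\notin\{0,1\}$, the six signed pairwise differences $\pm\rho,\pm(1-\rho),\pm 1$ of any such triple are pairwise distinct; hence the ordered pair of coordinates realizing the difference $1$ inside the triple is unique, and both $\sigma$ and $\rho$ are thereby intrinsically recovered from the underlying set.

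For (a), I would introduce the holomorphic map $\iota_\rho\colon\CC\to\Sym^3(\CC)$, $t\mapsto\{t,\,t+\rho,\,t+1\}$, whose image is $W_\rho$ by construction. Its composition with the symmetric function $p(s_1,s_2,s_3):=s_1+s_2+s_3$ is the affine isomorphism $t\mapsto 3t+1+\rho$, from which one reads off at once that $\iota_\rho$ is injective, has nowhere-vanishing derivative, and is proper (since $|\sigma_1|\to\infty$ as $|t|\to\infty$, the image leaves every compact subset of $\Sym^3(\CC)$). Thus $\iota_\rho$ is a closed embedding, making $W_\rho$ a closed complex submanifold with $p$ as a global coordinate. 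Part (b) then follows immediately from the recognition principle: an element of $W_\rho\cap W_{\rho'}$ supplies the same unique pair of difference $1$ for both memberships, forcing the same renumbering and hence $\rho=\rho'$.

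For (c), the key input is the classification of length-$3$ wheels for $\Lambda$ already recorded in the Example preceding the proposition: the cyclic-difference sequence of such a wheel is, up to rotation, $(\rho,1-\rho,-1)$ for some nontrivial zero $\rho$ of $\zeta$. Matching this pattern to the defining condition of $W_\rho$ handles both implications—a wheel renumbered to realize the cyclic differences $(\rho,1-\rho,-1)$ lies in $W_\rho$, and conversely a point of $W_\rho$, parametrized as in (a), has exactly these cyclic differences and so is a wheel (no proper cyclic subsum vanishes since $\rho,1-\rho,\rho-1$ are all nonzero). The only non-bookkeeping ingredient is the wheel classification itself, and since the Example has already supplied it, no further obstacle is anticipated.
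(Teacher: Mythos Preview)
Your proposal is correct and matches the paper's treatment: the paper declares the proposition ``straightforward'' and omits any argument, so you have simply written out the verification the authors left implicit.

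One small imprecision worth flagging: the claim that the six signed pairwise differences $\pm\rho,\pm(1-\rho),\pm 1$ are \emph{pairwise distinct} does not follow from $\rho\notin\{0,1\}$ alone---it also needs $\rho\neq 1/2$ (to separate $\rho$ from $1-\rho$). This happens to hold since $\zeta(1/2)\neq 0$, but more to the point your argument does not actually require full distinctness. What you use for (b) is only that the value $1$ occurs exactly once among the six differences, and that follows from $\rho\notin\{-1,0,1,2\}$, which is immediate for any nontrivial zero. With that adjustment the recognition principle stands and parts (a)--(c) go through exactly as you outline.
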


\begin{thm}\label{thm:cubic-relations-zeta}
 Let $\lambda(s)=\Lambda(s)$. 

(a) The multiplication map $\Sc_1\widehat\otimes\Sc_1\to \Sc_2$ is surjective, so,
by Proposition \ref{prop:cubic-relations-bar}, the space
\[
H^{-2}(\Bb_3^\bullet) \,\,=\,\, H^0(\Sym^3(\CC), \underline H^{-2}(\Bc_3^\bullet))
\]
is identified with the space of new cubic relations in $\Sc$ as well as in
 in $\SShuff(\Lambda)$. 

(b) The support of the coherent sheaf $\underline H^{-2}(\Bc_3^\bullet)$ is equal to $W=\bigsqcup W_\rho$.
If $\rho$ is a simple root of $\zeta(s)$, then $\underline H^{-2}(\Bc_3^\bullet)\simeq \Oc_{W_\rho}$
in a neighborhood of $W_\rho$. 
\end{thm}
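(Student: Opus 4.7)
The plan is to derive both parts from the sheaf-theoretic form of the bar complex on $\Sym^3(\CC)$ combined with the pointwise cohomology vanishing of Theorem \ref{thm:bar}. For the surjectivity in part (a), I write any $h \in \Sc_1 \widehat\otimes\Sc_1 = \Oc(\CC^2)$ as $h = h_+ + (s_1-s_2)\tilde h_-$ with $h_\pm$ symmetric entire, and set $z = s_1-s_2$; a direct computation yields
\[
\mu(h) \,=\, \alpha(z)\,h_+ \,+\, \beta(z)\,\tilde h_-, \qquad \alpha(z) = \Lambda(z)+\Lambda(-z), \quad \beta(z) = z\Lambda(z) - z\Lambda(-z),
\]
with $\alpha$ and $\beta$ both entire and even in $z$. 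A nonzero common zero $z_0$ would force $\Lambda(z_0)=\Lambda(-z_0)=0$, i.e.\ a length-two wheel for $\Lambda$; since $\Lambda$ admits no such wheels, $\alpha$ and $\beta$ generate the unit ideal in $\Oc(\CC)$, and a Weierstrass--Cousin Bezout identity $\alpha a + \beta b = 1$ with even entire $a,b$ (obtained by symmetrizing any solution under $z\mapsto -z$) lets one solve $\mu(h) = F$ for an arbitrary symmetric entire $F$ by taking $h_+ = aF$, $\tilde h_- = bF$. The identification $H^{-2}(\Bb_3^\bullet) = H^0(\Sym^3(\CC),\underline H^{-2}(\Bc_3^\bullet))$ follows from Cartan's Theorem B applied to the coherent terms of $\Bc_3^\bullet$ on the Stein manifold $\Sym^3(\CC)$, and Proposition \ref{prop:cubic-relations-bar} identifies this with $R_3^{\mathrm{new}}$; the same space computes new cubic relations in $\SShuff(\Lambda)$ because the surjectivity just shown gives $\SShuff(\Lambda)_2 = \Sc_2$.

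For part (b), Theorem \ref{thm:bar} applied fiberwise shows that $\underline H^{-2}(\Bc_3^\bullet)$ vanishes at every non-wheel point with distinct entries, so its support lies in $W \cup \Delta$, with $\Delta$ the big diagonal of $\Sym^3(\CC)$. Since wheels have distinct entries, $W \cap \Delta = \emptyset$, and a separate vanishing check on $\Delta$ (reducing to the two-variable case already treated in part (a)) yields $\supp \underline H^{-2}(\Bc_3^\bullet) \subseteq W = \bigsqcup_\rho W_\rho$. To pin down the structure near each $W_\rho$, I use that $\Bc_3^\bullet$ is equivariant under the $\CC$-action on $\Sym^3(\CC)$ by simultaneous translation $\{s_i\}\mapsto\{s_i+c\}$, since all differentials involve only the differences $s_i-s_j$ through the factors $\Lambda(s_i-s_j)$. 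Each $W_\rho$ is a single orbit of this translation by Proposition \ref{prop-Z-rho}(a), so the restriction of $\underline H^{-2}(\Bc_3^\bullet)$ to $W_\rho$ is a translation-invariant coherent sheaf of constant rank; the problem reduces to the fiber rank at a single $T_0 \in W_\rho$ plus the scheme-theoretic structure of the sheaf transverse to $W_\rho$.

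The main obstacle is this local computation at $T_0 = \{s_1^0, s_2^0, s_3^0\}$ with $s_2^0 - s_1^0 = \rho$ and $s_3^0 - s_2^0 = 1-\rho$. Using \eqref{eq:B-as-direct-image}, the fiber $\Bc^\bullet_{3, T_0}$ is a finite-dimensional complex whose terms are spanned by ordered partitions of $T_0$ and whose differentials are weighted by the factors $\Lambda(s_i^0-s_j^0)$; at this wheel exactly the three factors $\Lambda(\rho)$, $\Lambda(1-\rho)$, $\Lambda(-1)$ vanish, and one must show the resulting syzygy produces a one-dimensional class in $H^{-2}(\Bc^\bullet_{3, T_0})$ and no cohomology in any other degree. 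The simplicity of $\rho$ enters through the fact that the three vanishing $\Lambda$-factors linearize, in coordinates transverse to $W_\rho$, to a $3\times 2$ matrix of maximal rank $2$; the single linear dependence among three vectors in the $2$-dimensional transverse tangent space accounts for the one-dimensional class in the middle cohomology, while the rank-two property of the linearization (which would fail at a higher-order zero of $\zeta$) prevents any embedded nilpotent thickening transverse to $W_\rho$. Combining this local rank-one statement with translation invariance along $W_\rho$ and the support statement yields $\underline H^{-2}(\Bc_3^\bullet) \simeq \Oc_{W_\rho}$ in a neighborhood of $W_\rho$, completing the proof.
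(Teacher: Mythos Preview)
Your argument for part (a) is correct and cleaner than the paper's. The paper proves surjectivity fiberwise over $\Sym^2(\CC)$, splitting into the cases $s_1^0\neq s_2^0$ (handled by Theorem \ref{thm:bar}) and $s_1^0=s_2^0$ (handled by a residue computation). Your global Bezout argument in $\Oc(\CC)$ is more direct: the key observation that a common zero of $\alpha,\beta$ would force a length-$2$ wheel is exactly right, and $\beta(0)=2$ disposes of the origin. The invocation of Cartan B for the identification of $H^{-2}(\Bb_3^\bullet)$ with global sections of $\underline H^{-2}(\Bc_3^\bullet)$ is also correct.

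Part (b), however, contains a genuine error. You write that at a wheel point $T_0\in W_\rho$ one must show ``the resulting syzygy produces a one-dimensional class in $H^{-2}(\Bc^\bullet_{3,T_0})$ and no cohomology in any other degree.'' This is false: the fiber complex $\Bc^\bullet_{3,T_0}$ is the perturbed cochain complex of the hexagon $P_3$ with exactly three of the six edge weights $\lambda_{ij}$ vanishing (namely $\lambda_{12},\lambda_{23},\lambda_{31}$), and a direct computation gives
\[
h^{-3}(\Bc^\bullet_{3,T_0})=3,\qquad h^{-2}(\Bc^\bullet_{3,T_0})=3,\qquad h^{-1}(\Bc^\bullet_{3,T_0})=1.
\]
You have conflated the fiber of the cohomology sheaf $\underline H^{-2}(\Bc_3^\bullet)\otimes\CC_{T_0}$ with the cohomology $H^{-2}(\Bc^\bullet_{3,T_0})$ of the fiber complex. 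These differ precisely because $\underline H^{-1}(\Bc_3^\bullet)$ is nonzero at $T_0$; the two are related by the spectral sequence
\[
E_2^{ij}=\operatorname{Tor}_i^{\Oc_{\Sym^3(\CC)}}\bigl(\underline H^{j}(\Bc_3^\bullet),\CC_{T_0}\bigr)\ \Longrightarrow\ H^{j-i}(\Bc^\bullet_{3,T_0}).
\]
The paper's route is to first identify $\underline H^{-1}(\Bc_3^\bullet)$ as the structure sheaf of an analytic subspace $\Wc\subset\Sym^3(\CC)$ which, near $W_\rho$ for a simple root $\rho$, is reduced of codimension $2$; this gives $\dim\operatorname{Tor}_1(\underline H^{-1},\CC_{T_0})=2$, and then $3-2=1$ yields the one-dimensionality of $\underline H^{-2}\otimes\CC_{T_0}$. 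Your linearization heuristic (the $3\times 2$ Jacobian of the vanishing $\Lambda$-factors) is pointing at the right local picture for $\Wc$, but it belongs to the analysis of $\underline H^{-1}$, not to a direct computation of $H^{-2}$ of the fiber. Without separating these two cohomology sheaves, the argument does not go through.
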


\begin{rem}
 From the point of view of this section, a cubic relation in $\SShuff$ is
an entire function $F(s_1, s_2, s_3)\in\Oc(\CC^3)=\Sc_1^{\widehat\otimes 3}$
mapped to the zero element of $\Sc_3$ by the symmetric shuffle multiplication.
On the other hand, from the more immediate point of view of \S \ref{sec:quadration-relations},
a cubic relation in the spherical Hall algebra $SH$ is a distribution $f(a_1, a_2, a_3)$
on $\RR_+^3=(\Bunn_1)^3$, mapped to the zero distribution on $\Bunn_3$
by the Hall multiplication.  The relation between $f$ and $F$
is that of the Mellin transform.  Note that 
 whenever $f(a_1, a_2, a_3)$ is a relation, then so is
 the rescaling $f(\alpha a_1, \alpha a_2, \alpha a_3)$
for any $\alpha\in\RR_+$. Taking a weighted average of such rescalings, i.e.,
a convolution
 \[
 \int_0^\infty f(\alpha a_1, \alpha a_2, \alpha a_3)\varphi(\alpha) d^*\alpha
 \]
corresponds, on the Mellin transform side, to multiplying $F(s_1, s_2, s_3)$ by 
a function of the form $\psi(s_1+s_2+s_3)$. Since $s_1+s_2+s_3$ is a global coordinate
on each $W_\rho$, Theorem \ref{thm:cubic-relations-zeta}  admits the following
striking interpretation: {\em the space of  new cubic relations in $SH$ modulo
rescaling is identified with the space spanned by nontrivial zeroes of $\zeta(s)$. }

\vskip .2cm

 This fact is also true (with a similar proof)  for the Hall algebras corresponding to arbitrary 
compactified arithmetic curves
( = spectra of  rings of integers in number fields) as well as
(with an easier,  more algebraic proof)  for Hall algebras of smooth projective
curves  $X/\FF_q$.  
Note that for $X=\PP^1$ there are no new cubic relations \cite{kapranov, baumann-kassel},
while for  $X$ elliptic, new cubic relations were found 
 in \cite{schiffmann-drinfeld}.  Our results show that presence of cubic
 relations is a general phenomenon,
 holding for all curves $X/\FF_q$ of  genus $\geqslant 1$.
 
 We will give a detailed proof of Theorem \ref{thm:bar} and
 a sketch of proof of Theorem \ref{thm:cubic-relations-zeta},
 which will be taken up and generalized in a subsequent paper.
 
\end{rem}

\paragraph {E. Permuhohedra and the proof of Theorem \ref{thm:bar}.}
Our approach, similar to that of \cite{baranovsky, merkulov}, uses the {\em permutohedron},
which is the convex polytope
\[
P_n\,\,=\,\,\on{Conv}\bigl(\Sen_n \cdot (1,2,..., n)\bigr) \,\,\subset\,\, \RR^n
\]
of dimension $(n-1)$. Thus vertices of $P_n$ are the $n!$ vectors $(i_1, ..., i_n)$ for all
the permutations. It is  well known that faces of $P_n$ are in bijection with sequences
$(I_1, ..., I_p)$ of subsets of $\{1, ..., n\}$ which form a disjoint decomposition. We denote
$[I_1, ..., I_p]$ the case corresponding to $(I_1, ..., I_p)$. Subfaces of 
$[I_1, ..., I_p]$ correspond to sequences obtained by {\em refining} $(I_1, ..., I_p)$,
i.e., by replacing each $I_\nu$, in its turn, by a sequence $(J_{\nu, 1}, ..., J_{\nu, q_\nu})$
of subsets of $I_\nu$ forming a disjoint decomposition. 
Thus, as a polytope,
\[
[I_1, ..., I_p] \,\,\simeq \,\, P_{|I_1|}\times\cdot\times P_{|I_p|}, \quad
\dim [I_1, ..., I_p] \,=\, n-p.
\]
Let $C^\bullet(P_n)$ be the cochain comlplex of $P_n$ with complex coefficients. 
The basis of $C^m(P_n)$ is formed by  the $\1_F$, the characteristic functions of the
$m$-dimensional faces. We choose an orientation for each face. Then 
\[
d(\1_F)\,\,=\,\,\sum_{F'\supset F}\,\varepsilon_{FF'} \cdot \1_{F'}.
\] 
Here the sum is over $(m+1)$-dimensional faces $F'$ containing $F$, and 
$\varepsilon_{FF'}=\pm 1$ is the sign factor read from the orientations of $F$ and $F'$.

\vskip .2cm

On the other hand, \eqref{eq:B-as-direct-image} gives
a natural basis of $\Bc^{n-1-m}_{n,T}$ labeled
by the disjoint union of the preimages
\[
\pi_{i_1, ..., i_p}^{-1} (\{s_1^0, ..., s_n^0\}),\quad i_1+\cdots i_p=n.
\]
For a subset $I\subset \{1, ..., n\}$ let $T_I=\{s^0_i| i\in I \}\subset T$. 
Elements of each $\pi_{i_1, ..., i_p}^{-1} (\{s_1^0, ..., s_n^0\})$ are precisely
the
\[
(T_{I_1}, \cdots,  T_{I_p}) \,\,\in \,\,\Sym^{i_1}(\CC)\times\cdots\times\Sym^{i_p}(\CC)
\]
for all sequences of subsets $(I_1, ..., I_p)$, forming a disjoint decomposition of
$\{1, ..., n\}$. Denoting by $e_{I_1, ..., I_p}$ the corresponding basis vector
in $\Bc^{n-1-m}_{n, T}$, we get an isomorphism of graded vector spaces
\be\label{eq:B-to-C}
\Bc^\bullet_{n, T}\buildrel\sim
\over\lra C^\bullet(P_n)[n], \quad e_{I_1, ..., I_p} \mapsto \1_{[I_1, ..., I_p]}. 
\ee
To see the differential in $\Bc^\bullet_{n,T}$ from this point of view, consider the matrix
\[
\Lamed \,\,=\,\,\|\lambda_{ij}\|, \quad \lambda_{ij}=\lambda(s^0_i-s^0_j), \,\,\, 1\leqslant i,j\leqslant n, \,\, i\neq j. 
\]
Let $F\subset F'$ be a codimension 1 embedding of faces of $P_n$. That is,
$F'=[I_1, ..., I_p]$ and $F$ is   a minimal refinement of $F'$, i.e.,
is obtained by 
replacing some $I_\nu$ by $(I', I'')$ where $I', I''$
are nonempty sets forming a disjoint
decomposition of $I_\nu$. We put
\[
\lambda_{FF'} \,\,=\,\,\prod_{i'\in I'\atop i''\in I''} \lambda_{i'i''}. 
\]
It is immediately so see that the $\lambda_{FF'}$ satisfy the multiplicativity property
for any pair of composable codimension 1 embeddings:
\[
\lambda_{FF'}\lambda_{F'F''}\,\,=\,\,\lambda_{FF'}, \quad F\subset F'\subset F''.
\]
This implies that by putting
\[
d_\Lamed (\1_F) \,\,=\,\,\sum_{F'\supset F} \lambda_{FF'}\cdot \varepsilon_{FF'}\cdot \1_{F'},
\]
we obtain a differential $d_\Lamed$ in $C^\bullet(P_n, \CC)$ with square 0.
This is a certain perturbation of the cochain differential for $P_n$. 
We then see easily:

\begin{prop}\label{prop:B-to-C}
The isomorphism \eqref{eq:B-to-C} defines an isomorphism of complexes
\[
B^\bullet_{n,T}\lra \bigl(C^\bullet(P_n), d_\Lamed\bigr)[n]. 
\]
\qed
\end{prop}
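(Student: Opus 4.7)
The plan is to reduce the proposition to a single fiber computation followed by a sign bookkeeping exercise. Since the identification \eqref{eq:B-to-C} is already a linear bijection in each degree, the only content to verify is that the bar differential on $\Bc^\bullet_{n,T}$ translates into the perturbed coboundary $d_\Lamed$ on $C^\bullet(P_n)[n]$.

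The crux is the explicit evaluation of the $\nu$-th multiplication component $m_\nu \colon \Sc_{i_\nu}\otimes\Sc_{i_{\nu+1}}\to\Sc_{i_\nu+i_{\nu+1}}$ on a basis element of the fiber, namely the claim
\[
m_\nu(e_{I_1,\dots,I_p}) \,=\, \Bigl(\prod_{i\in I_\nu,\,j\in I_{\nu+1}}\lambda(s^0_i-s^0_j)\Bigr)\cdot e_{I_1,\dots,I_{\nu-1},\,I_\nu\cup I_{\nu+1},\,I_{\nu+2},\dots,I_p},
\]
in which the coefficient agrees exactly with the quantity $\lambda_{FF'}$ defined in the text. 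To prove the formula I would lift $e_{I_\nu}$ and $e_{I_{\nu+1}}$ to delta-supported symmetric sections concentrated on $T_{I_\nu}$ and $T_{I_{\nu+1}}$, plug them into the symmetric shuffle formula \eqref{eq:symmetric-shuffle2}, and evaluate the resulting symmetric function at a lift of $T_{I_\nu\cup I_{\nu+1}}$ in which the first $|I_\nu|$ coordinates lie in $T_{I_\nu}$ and the remaining $|I_{\nu+1}|$ lie in $T_{I_{\nu+1}}$. Since the coordinates of $T$ are pairwise distinct, any nontrivial $(|I_\nu|,|I_{\nu+1}|)$-shuffle would move some element of $T_{I_\nu}$ into the second block (or vice versa), forcing one of the two delta-support conditions to fail; so only the identity shuffle contributes, and its contribution is precisely the asserted $\lambda$-product.

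With this computation in hand, the alternating-sum bar differential $d=\sum_\nu(-1)^{\nu-1}m_\nu$ acts on $e_F$ as a sum over codimension-one face enlargements $F\subset F'$, each contributing $(-1)^{\nu-1}\lambda_{FF'}\,e_{F'}$ where $\nu$ records the position of the merge. On the cochain side, $d_\Lamed(\1_F)=\sum_{F'\supset F}\varepsilon_{FF'}\lambda_{FF'}\,\1_{F'}$, and the codimension-one enlargements of an ordered partition are precisely the adjacent-block mergers, indexed again by $\nu\in\{1,\dots,p-1\}$. Hence the two sums have the same list of terms and the same nonzero coefficients $\lambda_{FF'}$, and the proposition reduces to the sign identity $\varepsilon_{FF'}=(-1)^{\nu-1}$.

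The main obstacle, and the step I would treat most carefully, is this sign matching. I would orient each face $[I_1,\dots,I_p]\cong P_{|I_1|}\times\cdots\times P_{|I_p|}$ as the ordered product of fixed orientations on the factors $P_{|I_\mu|}$. The standard Koszul rule for products of oriented manifolds then yields an incidence sign $\varepsilon_{FF'}$ that differs from $(-1)^{\nu-1}$ by a partition-dependent factor. Since the relation $d^2=0$ holds on both sides of \eqref{eq:B-to-C}, any such discrepancy automatically satisfies a cocycle identity and can therefore be absorbed into a sign twist of the basis identification, replacing $e_{I_1,\dots,I_p}\mapsto\1_{[I_1,\dots,I_p]}$ by $e_{I_1,\dots,I_p}\mapsto\varepsilon(I_1,\dots,I_p)\,\1_{[I_1,\dots,I_p]}$ for a suitable sign function $\varepsilon$. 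This is the standard sign calculus for the cellular chain complex of the permutohedron viewed as a bar construction, exactly as invoked in \cite{baranovsky, merkulov}.
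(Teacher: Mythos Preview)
Your proposal is correct and is precisely the argument the paper leaves implicit: the paper gives no proof beyond ``We then see easily'' and a \qed, and what you have written is the natural unpacking of that claim---computing the fiberwise shuffle product on the basis vectors $e_{I_1,\dots,I_p}$ and observing that, because the $s^0_i$ are distinct, only the identity shuffle survives, producing exactly the factor $\lambda_{FF'}$. Your treatment of the sign matching (either directly via a product orientation or, failing that, by absorbing a coboundary into the basis identification) is more careful than the paper bothers to be, but it is the right way to close the argument.
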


Note that the perturbed
differential $d_\Lamed$ can be written for any system $\Lamed=\|\lambda_{ij}\|_{i\neq j}$
of complex numbers.  Conceptually, $\Lamed$ is a $\CC$-valued function on the
root system of type $A_{n-1}$. We simply refer to $\Lamed$ as a matrix.

By a {\em wheel} for $\Lamed$ we mean a sequence of
$i_1, ..., i_m$ of indices such that
\[
\lambda_{i_1, i_2} \,=\,\lambda_{i_2, i_3}\,=\cdots =\,\lambda_{i_{p-1}, i_p}\,=\,\lambda_{i_p, i_1}
\,=\, 0. 
\]
Theorem \ref{thm:bar} is now a consequence of the following result.

\begin{prop}\label{prop:lamed-without-wheels}
Let  $\Lamed=\|\lambda_{ij}\|_{i\neq j}$ be an $n$ by $n$ matrix without wheels.
Then 
$\bigl(C^\bullet(P_n), d_\Lamed\bigr)$ is exact outside of the leftmost term, where
the cohomology (kernel) is 1-dimensional. 
\end{prop}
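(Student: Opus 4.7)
The plan is to prove Proposition \ref{prop:lamed-without-wheels} by induction on $n$. The base case $n = 1$ is immediate, since $C^\bullet(P_1) = \CC$ concentrated in degree $0$. First I would invoke the no-wheels hypothesis: the directed graph on $\{1, \ldots, n\}$ with an arrow $i \to j$ whenever $\lambda_{ij} = 0$ has no directed cycles, so it admits a topological ordering. After relabeling indices, one may assume $\lambda_{ij} \neq 0$ whenever $i > j$. The submatrix $\Lamed' = \Lamed|_{\{1, \ldots, n-1\}}$ inherits the no-wheels property.

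For the induction step, I will use the short exact sequence of complexes
\[
0 \to Y \to (C^\bullet(P_n), d_\Lamed) \to X \to 0,
\]
where $Y$ is the subcomplex spanned by $\1_F$ for faces $F$ in which $\{n\}$ is \emph{not} a singleton last block, and the quotient $X$ is spanned by $\1_F$ for $F = (I_1, \ldots, I_{p-1}, \{n\})$. A direct check shows $d_\Lamed(Y) \subseteq Y$: merging any two adjacent blocks of such an $F$ cannot turn $\{n\}$ into a singleton last block. The map $(I_1, \ldots, I_{p-1}, \{n\}) \mapsto (I_1, \ldots, I_{p-1})$ identifies $X$ with $(C^\bullet(P_{n-1}), d_{\Lamed'})$, since the $\lambda_{FF'}$-weights for merges not involving $\{n\}$ depend only on $\Lamed'$. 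By the inductive hypothesis, $H^*(X) \cong \CC$ concentrated at the leftmost term.

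The key remaining step is to prove $H^*(Y) = 0$. The plan is to build a contracting homotopy $h \colon Y^m \to Y^{m-1}$ via a ``split off $\{n\}$'' move. For $F = (I_1, \ldots, I_p) \in Y$ with $n \in I_k$, define $h(\1_F)$ as a normalized multiple of $\1_{F'}$, where $F'$ is obtained by splitting $I_k$ as $(I_k \setminus \{n\}, \{n\})$ when $k < p$ and $|I_k| > 1$, or by splitting $I_p$ as $(\{n\}, I_p \setminus \{n\})$ when $k = p$ (in which case $|I_p| > 1$ automatically since $F \in Y$). The normalizing scalar is built from reciprocals of the $\lambda_{n, i}$ with $i$ ranging over the block split off, and the topological-sort condition guarantees $\lambda_{n, i} \neq 0$ for $i < n$. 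In the degenerate case $I_k = \{n\}$ with $k < p$, set $h(\1_F) = 0$. One then verifies $(h d_\Lamed + d_\Lamed h)(\1_F) = \1_F$ by a combinatorial cancellation, in which every non-``undo'' merge in $d_\Lamed h \1_F$ pairs with $h$ applied to a corresponding merge in $d_\Lamed \1_F$, the unique unpaired term being precisely the ``undo'' that recovers $\1_F$.

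The long exact cohomology sequence associated to $0 \to Y \to C^\bullet(P_n) \to X \to 0$, combined with $H^*(Y) = 0$, then yields $H^*(C^\bullet(P_n)) \cong H^*(X) \cong \CC$ at the leftmost term, completing the induction. The hard part will be the combinatorial verification of the homotopy identity $h d_\Lamed + d_\Lamed h = \mathrm{id}_Y$, including careful sign tracking and the handling of the degenerate case $I_k = \{n\}$ with $k < p$ where $h$ vanishes, so the expected cancellation must come from $h$ applied to neighboring faces. Should this verification prove too intricate, a fallback is to filter $Y$ further by the position of $n$ and run a spectral sequence, reducing $H^*(Y) = 0$ to the acyclicity of simpler ``tail complexes'' (ordered set partitions with $n$ in the first block), which can be contracted onto the unique top face using the same $\lambda_{n, i} \neq 0$ data.
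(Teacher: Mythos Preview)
Your short exact sequence $0\to Y\to(C^\bullet(P_n),d_\Lamed)\to X\to 0$ is set up correctly, and the identification $X\simeq(C^\bullet(P_{n-1}),d_{\Lamed'})$ is valid; this inductive route is genuinely different from the paper's. The gap is in the acyclicity of $Y$: the proposed ``split off $\{n\}$'' homotopy does not satisfy $hd_\Lamed+d_\Lamed h=\id$. With your split $(I_k\setminus\{n\},\{n\})$ for $k<p$, the merge that undoes it carries weight $\prod_{i\in I_k\setminus\{n\}}\lambda_{i,n}$ rather than $\prod\lambda_{n,i}$, and nothing prevents this from vanishing, so the $\1_F$ term need not even appear. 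If instead one splits uniformly as $(\{n\},I_k\setminus\{n\})$ so that the undo weight is the nonzero $\prod\lambda_{n,i}$, a second failure emerges: for $k>1$, the term of $d_\Lamed h(\1_F)$ from merging $I_{k-1}$ with the new singleton $\{n\}$ lands on $(\ldots,I_{k-1}\cup\{n\},I_k\setminus\{n\},\ldots)$, while the term of $hd_\Lamed(\1_F)$ from merging $I_{k-1}$ with $I_k$ and then splitting lands on $(\ldots,\{n\},I_{k-1}\cup(I_k\setminus\{n\}),\ldots)$. These are distinct faces, and no other term hits either one. Already for $n=3$ and $F=[\{1\},\{3\},\{2\}]\in Y$ one has $h(\1_F)=0$ while $hd_\Lamed(\1_F)$ contains an uncancelled multiple of $\1_{[\{3\},\{1\},\{2\}]}$. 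Your fallback filtration is not yet an argument either: the associated graded pieces you describe include one-dimensional pieces (e.g.\ when the head uses up everything except $n$), so their acyclicity is not automatic.

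The paper bypasses any homotopy on $Y$ altogether. It defines for each face $F=[I_1,\ldots,I_p]$ the scalar $\lambda_F=\prod_{\mu<\nu}\prod_{i\in I_\mu,\,j\in I_\nu}\lambda_{ij}$ and observes that $\Psi(\1_F)=\lambda_F\1_F$ is a chain map from $(C^\bullet(P_n),d_\Lamed)$ to the ordinary cochain complex $(C^\bullet(P_n),d)$, which is exact except for $H^0=\CC$ since $P_n$ is contractible. The kernel and cokernel of $\Psi$ are then filtered by $\dep(F)=\#\{\text{vanishing factors in }\lambda_F\}$, with graded quotients isomorphic to relative cochain complexes $C^\bullet(P^{(r)},P^{(r+1)})$ for the polyhedral subcomplexes $P^{(r)}=\bigcup_{\dep(F)\geq r}F$. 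The no-wheels hypothesis, after the same topological sort you invoke, forces each nonempty $P^{(r)}$ to be a lower order ideal for the weak Bruhat order on $\Sen_n$ and hence contractible onto the identity vertex. This global depth filtration is the mechanism the paper uses in place of your local split-off-$n$ move.
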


 \noindent {\sl Proof:} 
For a face $F=[I_1, ..., I_p]$ of $P_n$ we put
\be\label{eq:lambda-F}
\lambda_F \,\,=\,\,\prod_{\mu < \nu} \prod_{i\in I_\mu\atop j\in I_\nu} \lambda_{ij}.
\ee
Then for an embedding $F\subset F'$ of codimension 1 we have
\[
\lambda_F\,\,=\,\,\lambda_{F'}\cdot \lambda_{FF'}.
\]
This means that we have a morphism of complexes
\[
\Psi: \bigl( C^\bullet(P), d_\Lamed\bigr) \lra
 \bigl( C^\bullet(P), d\bigr),\quad \Psi(\1_F)= \lambda_F \cdot \1_F,
\]
where $d$ is the usual cochain differential. As $P_n$ is a convex polytope,
$ \bigl( C^\bullet(P), d\bigr)$ is exact outside the leftmost term, with $H^0=\CC$. 
We now analyze the kernel and cokernel of $\Psi$. For a face $F$ of $P_n$ as before
we call the {\em depth} of $P$ the number of factors in \eqref{eq:lambda-F}
which are zero. In other words, we put
\be\label{eq:subset-Z}
Z \,\,=\,\,\{ (i,j): \,\, i\neq j, \,\,\lambda_{ij}=0\} \,\,\subset \,\,\{1, ..., n\}^2.
\ee
Then the depth of $F$ is the number
\be\label{eq:def-dep}
\dep(F) \,\,=\,\,\# \bigl\{ (i,j)\in Z: \,\, \exists \,\mu <\nu: \,\, i\in I_\mu, j\in I_\nu\bigr\}.
\ee
Note that if $F$ is a subface of $F'$, then $\dep(F)\geqslant\dep(F')$. 
Therefore we have  a descending chain of  polyhedral subcomplexes
\[
P^{(r)} \,\,=\,\,\bigcup_{\dep(F)\geqslant r} F \,\,\,\subset \,\,\, P_n, \quad r\geqslant 0. 
\]

\begin{lemma}
(a) The complex $Coker(\Psi)$ is isomorphic to the relative cochain complex $C^\bullet(P_n, P^{(1)})$.

(b) The complex $\Ker(\Psi)$ has a filtration with quotients isomorphic to the relative 
cochain complexes $C^\bullet(P^{(r)}, P^{(r+1)})$, $r\geqslant 1$. 
\end{lemma}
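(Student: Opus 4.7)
The map $\Psi$ is diagonal in the basis $\{\1_F\}$, acting by multiplication by the scalar $\lambda_F$, which vanishes precisely when $F\in P^{(1)}$ (i.e.\ $\dep(F)\geq 1$) and is otherwise nonzero. As graded vector spaces we therefore have $\Ker(\Psi)=\mathrm{span}\{\1_F : F\in P^{(1)}\}$, while the image of $\Psi$ equals $\mathrm{span}\{\1_F : F\notin P^{(1)}\}$ inside the target, so $\Coker(\Psi)$ has a natural basis $\{\bar\1_F : F\in P^{(1)}\}$. What remains is to identify the induced differentials on these subquotients.

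For (a), since the target carries the ordinary cochain differential $d$, for $F\in P^{(1)}$ we have $d(\1_F)=\sum_{F'\supset F}\varepsilon_{FF'}\1_{F'}$; in the quotient, the terms with $F'\notin P^{(1)}$ (equivalently, $\dep(F')=0$) are killed, so only the superfaces of $F$ that still lie in $P^{(1)}$ survive. This is precisely the cochain differential of the subcomplex $P^{(1)}$, yielding the claimed identification with $C^\bullet(P_n,P^{(1)})$.

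For (b), the decisive technical input is that $d_\Lamed$ \emph{strictly preserves depth}. Indeed, for a codimension-one inclusion $F\subset F'$ obtained by splitting one block $I_\nu$ of $F'$ into consecutive pieces $(I',I'')$, one computes
\[
\dep(F)-\dep(F')=\#\bigl\{(i',i'')\in Z : i'\in I',\ i''\in I''\bigr\},
\]
and $\lambda_{FF'}=\prod_{i'\in I',\,i''\in I''}\lambda_{i'i''}$ vanishes precisely when this set is nonempty. Hence $\lambda_{FF'}\neq 0$ iff $\dep(F)=\dep(F')$, so each $D^r:=\mathrm{span}\{\1_F : \dep(F)=r\}$ is a subcomplex of $(C^\bullet(P_n),d_\Lamed)$, and the perturbed complex splits as $\bigoplus_{r\geq 0}D^r$. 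In particular $\Ker(\Psi)=\bigoplus_{r\geq 1}D^r$, filtered by $\mathrm{Fil}^s=\bigoplus_{r\geq s}D^r$ with successive quotients $D^r$ for $r\geq 1$.

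It remains to identify each $D^r$ with $C^\bullet(P^{(r)},P^{(r+1)})$, whose basis is indexed by depth-exactly-$r$ faces. The bases match canonically; to absorb the scalar factors $\lambda_{FF'}$ appearing in $d_\Lamed$, rescale via $\tilde\1_F=\mu_F^{-1}\1_F$, where
\[
\mu_F=\prod_{\mu<\nu}\prod_{\substack{i\in I_\mu,\,j\in I_\nu\\ (i,j)\notin Z}}\lambda_{ij}
\]
is the ``nonzero part'' of the product defining $\lambda_F$. For a depth-preserving codim-$1$ inclusion one checks that $\lambda_{FF'}=\mu_F/\mu_{F'}$, so in the rescaled basis $d_\Lamed|_{D^r}$ becomes the ordinary cochain differential. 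The main obstacle throughout is verifying that $d_\Lamed$ strictly preserves depth; once this key fact is in hand, everything else reduces to a bookkeeping of multiplicative factors.
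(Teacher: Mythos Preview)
Your argument is correct and in fact sharper than the paper's. The crucial observation---that for a codimension-one inclusion $F\subset F'$ one has $\lambda_{FF'}\neq 0$ if and only if $\dep(F)=\dep(F')$---is precisely what is needed to make the depth filtration in (b) a filtration by \emph{subcomplexes} under $d_\Lamed$; the paper uses this fact implicitly without stating it. You go further and note that it yields a genuine direct-sum splitting $(C^\bullet(P_n),d_\Lamed)=\bigoplus_{r\geq 0}D^r$, not merely a filtration, and you supply the explicit rescaling $\1_F\mapsto\mu_F^{-1}\1_F$ identifying each $D^r$ with the ordinary relative cochain complex $C^\bullet(P^{(r)},P^{(r+1)})$. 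The paper's proof of (b) is a two-line sketch (``in a way similar to (a)''), so your version adds real content.

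One small caveat on (a): your own computation correctly produces the cochain complex of the subcomplex $P^{(1)}$, i.e.\ $C^\bullet(P^{(1)})$, and then you relabel it $C^\bullet(P_n,P^{(1)})$ to match the stated lemma. These are different objects: the relative complex has basis indexed by faces of depth $0$, not depth $\geq 1$. The paper's one-line proof contains the same slip (it asserts $\Im(\Psi)$ is spanned by the $\1_F$ with $F\in P^{(1)}$, whereas in fact it is spanned by those with $F\notin P^{(1)}$). This does not affect the downstream application, since what matters is only the contractibility of the $P^{(r)}$ established in the next proposition; indeed, your direct-sum decomposition $\bigoplus_{r\geq 0} D^r\simeq\bigoplus_{r\geq 0} C^\bullet(P^{(r)},P^{(r+1)})$ already gives the desired acyclicity of $(C^\bullet(P_n),d_\Lamed)$ directly, without passing through $\Psi$ at all.
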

 
 \noindent {\sl Proof:} 
 The matrix of $\Psi$ is diagonal in the chosen bases, and $\Im(\Psi)\subset C^\bullet(P_n)$
 is spanned by the $\1_F$, $F\in P^{(1)}$, which shows (a). As for (b), for each $r\geqslant 0$
 we have the cochain subcomplex
  $
 C^\bullet(P_n)^{\geqslant r} \subset C^\bullet(P)
 $
 spanned by $\1_F$ with $\dep(F)\geqslant r$, with $C^\bullet(P_n)^{\geqslant 1}=\Ker(\Psi)$.
 The quotient $C^\bullet(P_n)^{\geqslant r}/C^\bullet(P_n)^{\geqslant r+1}$ is identified with
  $C^\bullet(P^{(r)}, P^{(r+1)})$ in a way similar to (a). \qed
  
  Note that the weights of faces of $P_n$ and
  the polyhedral subcomplexes $P^{(r)}$ are defined entirely in terms of  the subset
  $Z$ in \eqref{eq:subset-Z} which can be, a priori,  arbitrary. 
  Now, absense of wheels in $\Lamed$ (or, what is the same, in $Z$)
  means that after  an appropriate renumbering of $\{1, ..., n\}$, 
 any $(i,j)\in Z$ satisfies $i<j$.  Such renumbering does not
 change the combinatorial type of any of the $P^{(r)}$. 
  Proposition 
  \ref{prop:lamed-without-wheels} is therefore a consequence of the
  following purely combinatorial fact.
  
  \begin{prop}
  Let $Z\subset  \{(i,j)| \, 1\leqslant i< j\leqslant n\}$ be any subset  
  of positive roots for $A_{n-1}$.
    Then each polyhedral complex $P^{(r)}$ is  either empty or contractible. 
   \end{prop}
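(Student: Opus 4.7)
The plan is to apply the nerve lemma to a natural cover of $P^{(r)}$ by contractible subcomplexes, thereby reducing the proposition to a classical contractibility statement for linear-extension subcomplexes of the permutohedron.

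First I would dispose of the trivial case. Since $\dep(F)\leqslant|Z|$ for every face $F$, the complex $P^{(r)}$ is empty whenever $r>|Z|$. When $r\leqslant|Z|$, the vertex $v_0=[\{1\},\{2\},\ldots,\{n\}]$ lies in $P^{(r)}$, because the positivity hypothesis on $Z$ forces $\dep(v_0)=|Z|$: every $(i,j)\in Z$ has $i<j$, and $v_0$ places $i$ in the earlier singleton block $\{i\}$ and $j$ in the later singleton block $\{j\}$, so every pair of $Z$ is strictly ordered at $v_0$.

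Next, for each subset $Z'\subset Z$ with $|Z'|=r$, let $P_{Z'}\subset P_n$ denote the subcomplex of faces $F$ in which \emph{every} pair of $Z'$ is in strictly increasing blocks. The identity
\[
P^{(r)}\;=\;\bigcup_{Z'\subset Z,\;|Z'|=r} P_{Z'}
\]
follows directly from the definition of $\dep$, and any intersection $P_{Z'_1}\cap\cdots\cap P_{Z'_m}$ is equal to $P_{Z''}$ for $Z''=Z'_1\cup\cdots\cup Z'_m\subset Z$. Since $v_0\in P_{Z''}$ for every $Z''\subset Z$, all such intersections are non-empty. By Bj\"{o}rner's nerve lemma for subcomplex covers with contractible intersections, $P^{(r)}$ will be homotopy equivalent to the nerve of this cover; and the nerve, being a full simplex on $\binom{|Z|}{r}$ vertices, is contractible. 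Thus the proposition reduces to the key sublemma: \emph{for every $Z''\subset Z$, the subcomplex $P_{Z''}\subset P_n$ is contractible.}

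The main obstacle is this sublemma, which I would prove by induction on $n$, exploiting the fact that the element $1$ has no predecessors in the poset on $\{1,\ldots,n\}$ generated by $Z''$ (because $Z''$ consists of positive roots). Concretely, I would construct a deformation retraction of $P_{Z''}$ onto the subcomplex $P_{Z''}^{\{1\}}=\{F\in P_{Z''}:I_1=\{1\}\}$ by a canonical ``sliding'' of the position of $1$ to the front of the ordered partition. The target subcomplex $P_{Z''}^{\{1\}}$ is naturally identified with the analogous complex for the restricted poset on $\{2,\ldots,n\}$, so that the induction closes. Verifying that the sliding retraction preserves strict ordering of the pairs in $Z''$ is a short check that again uses only the positivity of $Z''$, completing the argument.
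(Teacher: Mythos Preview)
Your nerve-lemma reduction is correct and is a genuinely different route from the paper's: the cover $P^{(r)}=\bigcup_{|Z'|=r} P_{Z'}$, the identification of arbitrary intersections as $P_{Z''}$ for $Z''=Z'_1\cup\cdots\cup Z'_m\subset Z$, and the full-simplex nerve all work as stated. The paper instead argues directly that $P^{(r)}$ is the \emph{full} subcomplex of $P_n$ on its vertex set (because $\dep(F)=\min_{[\sigma]\in\Vertt(F)}|O(\sigma)\cap Z|$, where $O(\sigma)=\{(i<j):\sigma(i)<\sigma(j)\}$), observes that this vertex set is a lower order ideal for the weak Bruhat order on $\Sen_n$ (since $\sigma\leqslant\tau$ iff $O(\tau)\subseteq O(\sigma)$), and concludes that $P^{(r)}$ contracts onto $[e]$. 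Note that this same argument, applied with $Z''$ in place of $Z$ and $r=|Z''|$, already proves your key sublemma --- indeed $P_{Z''}$ is exactly $P^{(|Z''|)}$ computed for the set $Z''$ --- so your nerve-lemma detour, while valid, is not needed.

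The genuine gap in your proposal is the inductive proof of the sublemma. You assert a ``canonical sliding'' deformation retraction of $P_{Z''}$ onto $P_{Z''}^{\{1\}}$ and describe the verification as a short check, but you never construct the homotopy, and this is the entire content of the step. The subcomplex $P_{Z''}$ is a generally non-convex union of faces of $P_n$; producing a cellular homotopy that moves $1$ toward the front while staying inside $P_{Z''}$ throughout is not automatic --- for instance, the naive face map $[I_1,\ldots,I_p]\mapsto[\{1\},I_1,\ldots,I_\mu\setminus\{1\},\ldots,I_p]$ (with $1\in I_\mu$) is not comparable to the identity in the face poset and does not by itself yield a deformation retraction. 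The step can be completed (e.g., by a Quillen-type fiber argument for the forget-$1$ map, or by a discrete Morse matching), but as written it is a plausible outline rather than a proof.
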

   \begin{proof}
   For a permutation  $\sigma\in\Sen_n$ let
   \[
   O(\sigma) \,\,=\,\,\bigl\{ (i<j) |\,\,\sigma(i) < \sigma(j)\bigr\}
   \]
   be the set of order preserving pairs of $\sigma$. Thus the weak Bruhat order
   on $\Sen_n$ is given by
   \[
   \sigma \leqslant \tau \quad \text{iff}\quad O(\tau)\subseteq O(\sigma). 
   \]
   Now, fir a face $F\subset P_n$ we have 
   \be\label{eq:dep=min}
   \dep(F) \,\,\,=\,\,\min_{[\sigma]\in\Vertt (F)}\, |O(\sigma)\cap Z|. 
   \ee
   Indeed, for $F=[\sigma]$ a vertex this is precisely the definition 
   \eqref{eq:def-dep}, while for $F=[I_1, ..., I_p]$ the minimum in the RHS of 
   \eqref{eq:dep=min} is achieved for $\sigma$ arranging each $I_\nu$ in the decreasing
   order and is equal to $\dep(F)$. 
   
   Let $D=|Z|$. Then for $r>d$ we have $P^{(r)}=\emptyset$, while for $r\leqslant d$ we have that $P^{(r)}$
   contains at least the vertex $[e]$ corresponding to the unit permutation. Further, by 
   \eqref{eq:dep=min}, the set $\Vertt(P^{(r)})\subset\Sen_n$ is a ``left order ideal" with respect
   to the weak Bruhat order:  with each $\tau$, it contains all $\sigma\leqslant\tau$. This
   implies that $P^{(r)}$ contracts onto $[e]$. 
   \end{proof}
   This finishes the proof of Theorem \ref{thm:bar}. 
   
   \paragraph {F.  Proof of Theorem \ref{thm:cubic-relations-zeta} (sketch).} (a) It is enough to prove that the map
   of the fibers  $\Bc^{-2}_{2, T}\to \Bc^{-1}_{2, T}$ over any $T=\{s^0_1, s^0_2\}\in\Sym^2(\CC)$ is surjective. 
   If $s^0_1\neq s^0_2$, it follows from Theorem \ref{thm:bar}, as there are no wheels of length 2. 
   Assume now that $s^0_1=s_2^0=s^0$. The fiber of $p_{1,1*}\Oc_{\CC^2}$ at $\{s^0,s^0\}$ is then
$\Oc(\CC)/\men_{s^0}^2$,  the space of
first jets of sections of $\Oc_\CC$ at $s^0$. 
Since $\Lambda(s)$ has a first order pole at $0$ with residue 1, for any analytic function
$f(s_1, s_2)$ we have
\[
\lim_{s_1, s_2\to s^0} (\widehat \star_{1,1}F)(s_1, s_2) \,\, =\,\, {1\over 2} {d\over dt}\biggr|_{t=0}
F(s^0+t, s^0-t).
\]
This implies that the  subspace $\men_{s^0}/\men_{s_0}^2$ of jets vanishing at $s$, will map
surjectively onto the fiber of $\Oc_{\Sym^2(\CC)}$ at $\{s^0,s^0\}$. 

\vskip .2cm

(b) 
For $T=\{s_1, s_2, s_3\} \in\Sym^3(\CC)$ let $\CC_T$ be the skyscraper sheaf at $T$.
 We have a spectral sequence
\be\label{eq:SS-Tor}
E_2^{ij} = \operatorname{Tor}_i^{\Sym^3(\CC)}(\underline {H}^j(\Bc_3^\bullet), \CC_T) \,\, \Longrightarrow
\,\, H^{j-i}(\Bc_{3, T}^\bullet).
\ee
We analyze it backwards,
  using the information about the abutment to say something
about $E_2$ and then about  the $\underline {H}^j(\Bc_3^\bullet)$.
Some parts of this analysis involve straightforward computations
which we omit, highlighting the conceptual points only.

\vskip .2cm

First, let $\Delta\subset\Sym^3(\CC)$ be the locus of $T$ such that $s_i=s_j$ for
some $i\neq j$. Note that $W\cap\Delta=\emptyset$.
Theorem \ref{thm:bar} implies that for $T\notin W\cup\Delta$ the abutment of 
 \eqref{eq:SS-Tor} is zero for $j-i>-3$ and this implies that both $\underline H^{-2}$ and
 $\underline H^{-1}$ of $\Bc^\bullet_3$ are zero outside $ W\cup\Delta$. 

\vskip .2cm
 
Next,  $\Bc_3^{-1}=\Oc_{\Sym^3(\CC)}$, so $d_{-2}(\Bc_3^{-1})$ is a sheaf of ideals there and therefore
$\underline{H}^{-1}(\Bc_3^\bullet)$ is the structure sheaf of an analytic subspace $\Wc\subset \Sym^3(\CC)$.
By the above the support of $\Wc$ is contained in $W\cup\Delta$. 

\vskip .2cm

Next, we analyze  \eqref{eq:SS-Tor} in the case when $T\in W$. 
The permutohedron $P_3$ is a hexagon, so for $T\notin\Delta$
the complex $\Bc^\bullet_{3,T}$ is, by Proposition  \ref{prop:B-to-C},
the perturbed cochain complex of this hexagon corresponding to the matrix
$\Lamed = \|\lambda_{ij}\| =\|\Lambda(s_i-s_j)\|$. If $T\in W$, then, after renumbering,
we have $\lambda_{12}=\lambda_{23}=\lambda_{31}=0$, while other $\lambda_{ij}\neq 0$. 
>From this it is an elementary computation to find the dimensions of the cohomology 
spaces of
$\Bc^\bullet_{3,T}$ to be
\[
h^{-3}=3, \,\, h^{-2}=3, \,\, h^{-1}=1.
\]
This, shows that $\Wc$  contains $W$. Further, let $\rho$ be a nontnrivial zero of $\zeta(s)$
of multiplicity $\nu$ and $\Wc_\rho$ be the part of $\Wc$
supported on $W_\rho$.
We can then analyze the last map in complex $\Bc^\bullet_3$
near  $T=\{\rho+c, 1-\rho+c, -1+c\}\in W_\rho$ directly, 
 using the family of perturbed differentials $d_\Lamed: C^1(P_3)\to C^2(P_3)$ with $\Lamed=\|\Lambda(s_i-s_j)\|$
 depending on $\{s_1, s_2, s_3\}$ near $W_\rho$. This is again an elementary computation
 which yields that
  $\Wc_\rho$ is
isomorphic to the $\nu$th infinitesimal neighborhood of $W_\rho$
in an embedded surface. In particular, if $\rho$ is a simple root, then $\Wc_\rho=W_\rho$
as an analytic subspace.

This means that $\Wc=\underline{H}^{-1}(\Bc^\bullet_3)$ is given locally in $\Sym^3(\CC)-\Delta$ by two equations and so
$\dim \operatorname{Tor}_1^{\Sym^3(\CC)}(\underline{H}^{-1}(\Bc_3^\bullet), \CC_T)=2$
for any $T\in W$. From the equality $h^{-2}(\Bc^\bullet_{3,T})=3$ 
and the spectral sequence \eqref{eq:SS-Tor} we then conclude
that $\dim(\underline H^{-2}(\Bc_3^\bullet)\otimes\CC_T)=1$, and so $W\subset \supp(\underline H^{-2}(\Bc^\bullet_3))$.  
The statement that $\underline H^{-2}(\Bc^\bullet_3)=\Oc_{W_\rho}$ near $W_\rho$ for a simple root $\rho$,
uses an additional local calculation which we omit. We also omit the analysis of the  case
$T\in\Delta$ which shows that the support of $\underline H^{-2}(\Bc^\bullet_3)$ 
does not meet $\Delta$. 
\qed

\vskip 1cm

 \footnotesize
  {
  Authors' addresses:

\begin{itemize}
\item[] M.K.: Department of Mathematics, Yale University, 10 Hillhouse Avenue, New Haven CT 06520 USA, 
email: {\tt mikhail.kapranov@yale.edu}

\item[] O. S.: D\'epartement de Math\'ematiques,
B\^atiment 425, 
Facult\'e des Sciences d'Orsay, 
Universit\'e Paris-Sud 11
F-91405 Orsay Cedex, France, email: {\tt  olivier.schiffmann@gmail.com}

\item[] E. V.:  Institut de Math\'ematiques de Jussieu, UMR 7586,  Universit\'e Paris-7 Denis Diderot, 
UFR de Math\'ematiques, Case 7012, 75205 Paris Cedex 13, France, email:
{\tt vasserot@math.jussieu.fr}
\end{itemize}
}


\begin{thebibliography}{100}

\bibitem{baranovsky} Baranovsky, V. 
{\em A universal enveloping for $L_\infty$-algebras}, 
 Math. Res. Lett. {\bf 15} (2008),  1073-1089.

\bibitem{baumann-kassel} Baumann, P., Kassel, C. 
{\em The Hall algebra of the category  of coherent sheaves on the projective line,}
J. reine und angew. Math. {\bf 533} (2001), 207-233.

\bibitem{edwards} Edwards, H. {\em Riemann's Zeta Function}. Dover Publ., 2001.

\bibitem{FJMM} Feigin, B., Jimbo, M., Miwa, T., Mukhin, E.,
{\em Symmetric polynomials vanishing on shifted diagonals and Macdonald polynomials},
Int. Math. Res. Notices, {\bf 18} (2003), 1015-1034.

\bibitem{feigin-odesskii:shuffles} Feigin, B.L., Odesskii, A.V., 
{\em Vector bundles on elliptic curves and Sklyanin algebras}, in:
``Topics in quantum groups and finite-type invariants",  p. 65-84, 
Amer. Math. Soc. Transl. Ser. 2, {\bf 185}, Amer. Math. Soc., Providence, RI, 1998. 


\bibitem{FLM} Frenkel, I., Lepowsky, J., Meurman, A. 
{\em Vertex Operator Algebras and the Monster}, Academic Press, 1988.


\bibitem{goldfeld} Goldfeld, D. {\em  Automorphic forms and $L$-functions for the 
group ${\rm GL}(n,\bold R)$}. With an appendix by K. A. Broughan.   Cambridge 
University Press,  2006.

\bibitem{grayson} Grayson, D., 
{\em Reduction theory using semistability, I and II,}
 Comment. Math. Helv. {\bf 59} (1984), 600-634
and
  {\bf 61} (1986), 661-676.
  
  \bibitem{green} 
  Green, J. A.
  {\em Hall algebras, hereditary algebras and quantum groups},
    Invent. Math.  {\bf 120}  (1995),  no. 2, 361-377. 
    
 \bibitem{harish-chandra}    
    Harish-Chandra , {\em Discrete series for semisimple Lie groups. II. Explicit determination of the characters},
     Acta Mathematica {\bf 116}
    (1966) 1-111.


\bibitem{jorgensen-lang} Jorgensen, J.,  Lang, S. 
{\em $\operatorname{Pos}_n(\RR)$ and Eisenstein series,} Lecture Notes in Math., {\bf 1868},
Springer, Berlin, 2005.

\bibitem{kapranov} Kapranov, M. {\em  Eisenstein series and quantum affine algebras,}
{\em J. Math. Sci.}  {\bf 84} (1997), 1311-1360.


\bibitem{KSV:curves} Kapranov, M., Schiffmann, O., Vasserot, E.  {\em The Hall algebra of
a curve},  arXiv:1201.6185.
  

\bibitem{lang} Lang, S. {\em Algebraic Number Theory,}  2nd Edition, Springer, 1994. 

\bibitem{lax-phillips} Lax, P., Phillips, R. {\em Scattering Theory and
Automorphic Functions}, Princeton Univ. Press, 1976.


 

\bibitem{KS} Kontsevich, M., Soibelman, Y. 
{\em Lectures on motivic Donaldson-Thomas invariants and wall-crossing formulas}, preprint. 

\bibitem{macdonald} Macdonald, I. {\em Symmetric Functions and Hall
Polynomials}, Oxford, Clarendon Press, 1995.


\bibitem{manin} Manin, Yu. I. {\em New dimensions in geometry.}
 Arbeitstagung Bonn 1984, p. 59-101, Lecture Notes in Math., {\bf 1111}, Springer, Berlin, 1985.
 
 \bibitem{merkulov}  Merkulov, S. A. {\em Permutahedra, HKR isomorphism and polydifferential Gerstenhaber-Schack complex},
 in:  ``Higher structures in geometry and physics", Progr. Math., {\bf 287},  p. 293-314, 
 Birkhauser/Springer, New York, 2011,
 
 \bibitem{moeglin-waldspurger-book} Moeglin, C., Waldspurger, J.-L.
{\em Spectral Decomposition and Eisenstein Series}, Cambridge University
Press, 1995. 
 
 \bibitem{mumford} Mumford, D. 
{\em Lectures on Curves on an Algebraic Surface, 
 With a section by G. M. Bergman.}
 Princeton University Press, Princeton, N.J. 1966.
 
 \bibitem{reed-simon} Reed, M., Simon, B. {\em Methods of Modern Mathematical
 Physics. I. Functional Analysis, and II. Fourier Analysis, Self-Adjointness.} Academic Press, New York, 1975. 
 
 
 \bibitem{ringel}
 Ringel, C.M. 
 {\em Hall algebras and quantum groups},   Invent. Math.  
 {\bf 101}  (1990),   583-591.
 
 \bibitem{schiffmann-orbifold}
 Schiffmann, O. 
 {\em Noncommutative projective curves and quantum loop algebras},   Duke Math. J.  
 {\bf 121}  (2004),  113-168.  
 
 \bibitem{schiffmann-drinfeld} Schiffmann, O.
 {\em Drinfeld realization of the elliptic Hall algebra},
 arXiv: 1004.2575. 
 
 \bibitem{schiffman-vasserot:elliptic}
 Schiffmann, O., Vasserot, E. 
 {\em The elliptic Hall algebra, Cherednik Hecke algebras and Macdonald polynomials},
   Compos. Math.  {\bf 147}  (2011),  188-234.
 
 \bibitem{schiffmann-vasserot:higher-genus} Schiffmann, O., Vasserot, E., {\em Hall algebras of curves,
 commuting varieties and Langlands duality,} arXiv: 1009.0678. 
 
 \bibitem{selberg} Selberg, A. {\em A new type of zeta functions connected with quadratic forms},
 Report of the Institute in the Theory of Numbers, University of Colorado, Boulder, Colorado (1959),
 207-210, reprinted in Collected papers, vol. I, p. 473-474, Springer-Verlag, 1989. 

\bibitem{soule} Soul\'e, C. 
{\em Lectures on Arakelov Geometry.}
 With the collaboration of D. Abramovich, J.-F. Burnol and J. Kramer. 
  Cambridge University Press, Cambridge, 1992.
 
 \bibitem{stuhler1} 
Stuhler, U.   
{\em Eine Bemerkung zur Reduktionstheorie quadratischer
Formen},   Arch. Math. (Basel) {\bf 27} (1976), 604-610.

\bibitem{stuhler2} Stuhler, U.
{\em Zur Reduktionstheorie der positiven quadratischen Formen II, }
 Arch. Math. (Basel)  {\bf 28} (1977),  611-619.
  
  
 
  
  \bibitem{terras} Terras, A. {\em Harmonic Analysis on Symmetric Spaces and Applications II}, 
  Springer-Verlag, 1988. 
  
  \bibitem{zagier} Zagier, D. {\em Eisenstein series and the Riemann zeta function,} in:
   ``Automorphic forms, representation theory and arithmetic" (Bombay, 1979),  pp. 275-301, 
   Tata Inst. Fund. Res. Studies in Math., 10, Tata Inst. Fundamental Res., Bombay, 1981.   
  
    
  

\end{thebibliography}
\end{document}